\theoremstyle{plain}
\newtheorem{thm}{Theorem}[section]
\newtheorem{lem}[thm]{Lemma}
\newtheorem{prop}[thm]{Proposition}
\newtheorem{cor}[thm]{Corollary}
\newtheorem{defn}[thm]{Definition}
\newtheorem{rmk}[thm]{Remark}
\newtheorem{ques}[thm]{Question} 
\newtheorem{fact}[thm]{Fact}
\newtheorem{conv}[thm]{{\bf Convention}}
\DeclareMathOperator{\Aut}{Aut}
\DeclareMathOperator{\Out}{Out}
\DeclareMathOperator{\Inn}{Inn}
\DeclareMathOperator{\Stab}{Stab}
\DeclareMathOperator{\Fix}{Fix}
\newcommand{\NN}{\mathbb{N}}      %
\newcommand{\ZZ}{\mathbb{Z}}      
\newcommand{\QQ}{\mathbb{Q}} 
\newcommand{\RR}{\mathbb{R}}      
\newcommand{\HH}{\mathbb{H}}      
\DeclareMathOperator{\PSL}{PSL}
\DeclareMathOperator{\GL}{GL}
\DeclareMathOperator{\Cone}{Cone}
\DeclareMathOperator{\CAT}{CAT}
\DeclareMathOperator{\Isom}{Isom}
\DeclareMathOperator{\FC}{FC}
\DeclareMathOperator{\AC}{AC}
\DeclareMathOperator{\Trip}{Trip}
\DeclareMathOperator{\Mod}{Mod}
\DeclareMathOperator{\diam}{diam}
\DeclareMathOperator{\BS}{BS}
\begin{document}

\title{On the kernel of actions on asymptotic cones}

\author{Hyungryul Baik}
\author{Wonyong Jang}
\address{Department of Mathematical Sciences, KAIST,
291 Daehak-ro, Yseong-gu, 
34141 Daejeon, South Korea}
\email{hrbaik@kaist.ac.kr \& jangwy@kaist.ac.kr}

\maketitle


\begin{abstract}
 Any finitely generated group $G$ acts on its asymptotic cones in natural ways. The purpose of this paper is to calculate the kernel of such actions. First, we show that when $G$ is acylindrically hyperbolic, the kernel of the natural action on every asymptotic cone coincides with the unique maximal finite normal subgroup $K(G)$ of $G$. 
 Secondly, we use this equivalence to interpret the kernel of the actions on asymptotic cones as the kernel of the actions on many spaces at ``infinity". 
 For instance, if $G \curvearrowright M$ is a non-elementary convergence group, then we show that the kernel of actions on the limit set $L(G)$ coincides with the kernel of the action on asymptotic cones. 
 Similar results can also be established for the non-trivial Floyd boundary and the $\CAT(0)$ groups with the visual boundary, contracting boundary, and sublinearly Morse boundary. 
 Additionally, the results are extended to another action on asymptotic cones, called Paulin’s construction.
 In the last section, we calculate the kernel on asymptotic cones for various groups, and as an application, we show that the cardinality of the kernel can determine whether the group admits non-elementary action under some mild assumptions. 
\end{abstract}
\vspace{0.6cm} 


\textbf{AMS Classification numbers (2020)} Primary: 20F65, 20F67, Secondary: 20F69 , 51F30

\vspace{0.6cm} 

\section{Introduction} \label{sec:intro}
 In geometric group theory, group action has proven to be a very powerful tool to study various properties of groups. Abundant examples of ``nice" actions of groups on ``good" spaces have been constructed and studied by many authors.
 Some of the classical examples include hyperbolic group actions on their Gromov boundary, Kleinian group actions on the limit sets, $\Out(F_n)$ actions on the outer spaces or free-factor complexes, mapping class group actions on the Teichm\"{u}ller spaces, and the list goes on and on.

There are rather fairly general constructions of spaces on which groups act. One of the most famous and well-studied objects is the Cayley graph which can be defined for any group. While this graph depends on the choice of a generating set, it is well-defined up to quasi-isometry whenever its generating set is finite. In this case, the Cayley graph is connected and locally finite, hence also proper. Moreover, by the construction, a group $G$ acts canonically on the Cayley graph of $G$, and this action is known to be isometric, cocompact, and properly discontinuous.
 Consequently, the Cayley graph plays a central role in the study of finitely generated groups.

 Another such example is an asymptotic cone. Roughly speaking, an asymptotic cone is a view of the metric space from infinity through our ultrafilter-glasses. 
 For any finitely generated group $G$, we can construct an asymptotic cone of $G$ together with a non-principal ultrafilter $\omega$ and a suitable real sequence $d_n$, by considering $G$ as a metric space. 
 We refer readers who are interested in the precise definitions and further details of asymptotic cones to Section \ref{Pre:Asymptotic}.
 We also refer the readers to M. Sapir's note \cite{SapirNote}, R. Young's note \cite{YoungNote}, \cite{MR3753580}, \cite[Chapter 7]{MR2007488}, etc.

 Asymptotic cones of a group reflect many properties of the group. For example, a group is of polynomial growth if and only if all of its asymptotic cones are locally compact (for a brief explanation, see \cite[Chapter 4]{MR2281936} and references therein). 
 It is also known that a finitely generated group is hyperbolic if and only if all of its asymptotic cones are real trees \cite{MR919829}. 
 Furthermore, if we restrict it to a finitely presented group, then $G$ is hyperbolic if and only if one of its asymptotic cones is a real tree \cite[Appendix]{MR2507115}.
Nevertheless, because of the unexpected and pathological phenomena of asymptotic cones, e.g., the existence of a group with 
uncountably many non-homeomorphic asymptotic cones, actions on asymptotic cones are less known than the action on the Cayley graph.

In this paper, we study the kernel of the actions of $G$ on its asymptotic cone.
To the best of our knowledge, this is the first attempt to describe the kernel.
Our first main theorem is that if $G$ is finitely generated acylindrically hyperbolic, then the kernel of the natural action on every asymptotic cone is the same as the unique maximal finite normal subgroup of $G$, denoted by $K(G)$. 

\begin{thm}[Acylindrically hyperbolic group, Theorem \ref{AHG:4-equality-CP}]
Let $G$ be a finitely generated acylindrically hyperbolic group. Then 
$$ \ker \left ( G \curvearrowright \Cone_{\omega}(G,d_n) \right ) = K(G) $$
 for all ultrafilter $\omega$ and sequence $d_n$.
\end{thm}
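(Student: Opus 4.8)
The plan is to prove the two inclusions $K(G) \subseteq \ker$ and $\ker \subseteq K(G)$ separately. The easy direction is $K(G) \subseteq \ker\left(G \curvearrowright \Cone_\omega(G,d_n)\right)$: since $K(G)$ is finite, any $g \in K(G)$ has bounded displacement on $G$ with respect to the word metric, i.e. $\sup_{x \in G} d(x, gx) < \infty$. Because the metric on the asymptotic cone is obtained by rescaling the word metric by the sequence $d_n \to \infty$ and taking the $\omega$-limit, any isometry with bounded displacement acts trivially on the cone. Thus every element of $K(G)$ fixes every point of $\Cone_\omega(G,d_n)$, giving one containment for all choices of $\omega$ and $d_n$.

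The substance of the theorem is the reverse inclusion $\ker \subseteq K(G)$, and the key is to produce, for any $g \notin K(G)$, a point of the cone that $g$ moves. I would split this into two cases according to the structure theory of acylindrically hyperbolic groups. First, since $G$ is acylindrically hyperbolic, $K(G)$ is exactly the set of elements that act as \emph{bounded} (elliptic with bounded orbits in a suitable sense) elements; equivalently, any $g \notin K(G)$ generates an infinite image and, modulo $K(G)$, one can find a generalized loxodromic (hyperbolic) element or use that the centralizer/orbit structure forces unbounded displacement. Concretely, the plan is to first reduce to a \emph{loxodromic} witness: if $g$ itself is loxodromic on the acylindrical action on a hyperbolic space, then its orbit map is a quasi-isometric embedding of $\ZZ$, so along the sequence $g^{\lfloor t\, d_n \rfloor}$ the displacement $d(g^{k}x, g^{k+1}x)$ stays comparable to $|g|$ while the points escape to the cone, and one checks the resulting point of $\Cone_\omega$ is genuinely moved by $g$.

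The harder case is when $g \notin K(G)$ is \emph{not} loxodromic, so that $g$ has bounded orbits on the hyperbolic space yet still generates an infinite subgroup (an elliptic element of infinite order, or an element whose powers eventually leave $K(G)$). Here I expect the main obstacle to lie: I would use the theory of \emph{hyperbolically embedded} subgroups and Osin's characterization, choosing a loxodromic element $h$ (which exists by acylindrical hyperbolicity) and examining the element $g$ relative to a sequence of scales where the conjugates or products $g h^n$ reveal unbounded displacement. The cleanest route is probably to show that $\ker\left(G \curvearrowright \Cone_\omega\right)$ is itself a normal subgroup with bounded orbits on \emph{every} asymptotic cone, hence its word-length is bounded (a subgroup acting trivially on all cones must have elements of uniformly bounded translation length), and then invoke that in an acylindrically hyperbolic group any normal subgroup consisting of elements of bounded translation length must be finite and therefore contained in the maximal finite normal subgroup $K(G)$. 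This last step fuses the two cases: it avoids case analysis on individual elements by analyzing the kernel as a subgroup, using that a normal subgroup which is not contained in $K(G)$ must contain a loxodromic element (a standard consequence of acylindrical hyperbolicity), which then cannot act trivially on the cone by the loxodromic argument above.

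The main obstacle, I anticipate, is the quantitative control needed to pass from ``unbounded displacement in $G$'' to ``nontrivial action on the ultralimit'': one must choose the basepoint sequence and the comparison points carefully so that the rescaled displacement survives the $\omega$-limit rather than degenerating to zero, and so that the moved point actually lies in the cone for the \emph{given} $\omega$ and $d_n$ rather than for a conveniently chosen subsequence. Handling arbitrary $d_n$ (not just $d_n = n$) and arbitrary non-principal $\omega$ uniformly is where the acylindricity hypothesis must be leveraged most carefully, since acylindricity provides the uniform bounds on how many group elements can nearly fix two distant points, which is precisely what prevents the displacement from being washed out in the limit.
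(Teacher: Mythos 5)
Your easy direction is fine (finiteness \emph{plus normality} of $K(G)$ gives uniformly bounded displacement, which dies under the rescaling), and your final reduction of the hard direction is also sound: if the kernel were not contained in $K(G)$ it would be an infinite normal subgroup, hence acylindrically hyperbolic with the induced action on $X$ still non-elementary and acylindrical, hence would contain a loxodromic element --- this parallels the paper's use of its Lemma \ref{infinite_normalsubgroup_acyl}. The genuine gap is in the step where you exhibit a point of the cone moved by a loxodromic $g$. Your proposed witness is the sequence $g^{\lfloor t\,d_n\rfloor}x$, and you argue that the displacement $d(g^kx,g^{k+1}x)$ ``stays comparable to $|g|$.'' That is exactly the problem: this displacement equals $d(x,gx)$, a constant, so after dividing by $d_n\to\infty$ it tends to $0$, and the point $\{g^{\lfloor t\,d_n\rfloor}x\}$ is \emph{fixed} by $g$ in the cone, not moved. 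Loxodromicity makes the orbit escape to infinity; it does not make the displacement of $g$ along its own orbit grow. To move a point of $\Cone_{\omega}(G,d_n)$ one needs a sequence $x_n$ with $\|x_n^{-1}gx_n\|_S$ growing \emph{linearly} in $d_n$, and your proposal supplies no mechanism for this. The paper's mechanism is conjugation by an auxiliary loxodromic: using density of loxodromic fixed points, pick $h$ with $\Fix(h)\cap\Fix(g^{-1}hg)=\emptyset$, so $x:=h$ and $y:=g^{-1}hg$ are independent loxodromics; then $\left<x^k,y^k\right>$ is free and quasi-isometrically embedded (ping-pong plus Lemma \ref{QIembeddingL}), and since $g^{-1}h^{-nk}gh^{nk}=y^{-nk}x^{nk}$ is a word of length $2n$ in that free group, the displacement of $g$ at $h^{nk}$ grows linearly in $n$; the moved point is $\{h^{k\lfloor d_n\rfloor}\}$.

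Your fallback (the ``cleanest route'') is also unavailable: the parenthetical claim that a subgroup acting trivially on every asymptotic cone must consist of elements of uniformly bounded translation length is false in general, and the paper itself records the counterexample. In the Heisenberg group, the generator $x$ has an infinite conjugacy class (so unbounded displacement, $x\notin\FC(G)$), yet it lies in the kernel of the action on every asymptotic cone, because $[x,\cdot]$ lands in the distorted center and the displacement therefore grows sublinearly. So the implication ``trivial on cones $\Rightarrow$ bounded displacement'' is precisely what fails for general groups; establishing it for acylindrically hyperbolic groups is essentially the content of the theorem and cannot be invoked as a known lemma. The proof you would need for it is the conjugation-growth estimate above, equivalently the inclusion $\ker\left(G\curvearrowright\Cone_{\omega}(G,d_n)\right)\subseteq\ker\left(G\curvearrowright\partial X\right)$, which the paper then combines with the Minasyan--Osin and Dahmani--Guirardel--Osin identification $\ker\left(G\curvearrowright\partial X\right)=K(G)=\FC(G)=\mathcal{A}(G)$.
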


Then, combining with prior results obtained by Osin \cite{MR3430352}, Minasyan-Osin \cite{MR3368093}, and Dahmani-Guirardel-Osin \cite{MR3589159} (for details, see Lemma \ref{Lem_KnownResults}), it turns out that these two subgroups also coincide with subgroups $\ker \left ( G \curvearrowright \partial X \right ), \ \FC(G)$, and $\mathcal{A}(G)$.

\begin{cor} [Corollary \ref{AHG:4-equality}]
If $G$ is a finitely generated acylindrically hyperbolic group, then 
\begin{align*}
 \begin{aligned}
\ker \left ( G \curvearrowright \Cone_{\omega}(G,d_n) \right ) = \ker \left( G \curvearrowright \partial X \right ) = K(G) = \FC(G) = \mathcal{A}(G)
\end{aligned}
\end{align*}
for all ultrafilter $\omega$, sequence $d_n$, and $\delta$-hyperbolic space $X$ on which $G$ acts non-elementarily and acylindrically.
\end{cor}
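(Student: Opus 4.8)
The plan is to read off the whole chain of equalities from two inputs, so that the corollary is a short transitivity argument rather than a new computation. The only genuinely new equality is $\ker\left(G \curvearrowright \Cone_\omega(G,d_n)\right) = K(G)$, which is exactly Theorem \ref{AHG:4-equality-CP}; the identifications of $K(G)$ with $\FC(G)$, $\mathcal{A}(G)$, and $\ker\left(G \curvearrowright \partial X\right)$ are the content of Lemma \ref{Lem_KnownResults}, assembled from the work of Osin, Minasyan--Osin, and Dahmani--Guirardel--Osin.

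First I would apply Theorem \ref{AHG:4-equality-CP} directly: since $G$ is finitely generated and acylindrically hyperbolic, it gives $\ker\left(G \curvearrowright \Cone_\omega(G,d_n)\right) = K(G)$. The point worth stressing is that the right-hand side is an intrinsic invariant of $G$, depending on neither the non-principal ultrafilter $\omega$ nor the scaling sequence $d_n$; this is precisely what allows the single finite subgroup $K(G)$ to absorb the outer quantifiers ``for all $\omega$, $d_n$'' and thereby identify kernels of a priori different cone actions.

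It then remains to invoke Lemma \ref{Lem_KnownResults} for $K(G) = \FC(G) = \mathcal{A}(G) = \ker\left(G \curvearrowright \partial X\right)$ and to chain the equalities. Since $\FC(G)$ and the amenable radical $\mathcal{A}(G)$ are themselves intrinsic, their coincidence with $K(G)$ needs no discussion of $\omega$, $d_n$, or $X$. The one step deserving care is the boundary term, because the statement quantifies over all non-elementary acylindrical actions $G \curvearrowright X$: one must know the kernel $\ker\left(G \curvearrowright \partial X\right)$ is the same subgroup for every such $X$. This is the independence packaged in Lemma \ref{Lem_KnownResults}, and it is where all the real difficulty lies. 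Morally it holds because $\ker\left(G \curvearrowright \partial X\right)$ is normal and, as $G$ is non-elementary, contains no loxodromic element (a loxodromic moves points of the infinite boundary) and no parabolic element (acylindricity rules these out); hence every element is elliptic, the kernel has bounded orbits, and a normal subgroup with bounded orbits in a non-elementary acylindrical action is finite, so it sits inside $K(G)$. The reverse containment is the observation that the finite normal subgroup $K(G)$ must act trivially on the limit set of a non-elementary convergence action. Granting Theorem \ref{AHG:4-equality-CP} and Lemma \ref{Lem_KnownResults}, the corollary follows by transitivity of equality; the genuine work sits entirely upstream, in the cone computation and in the boundary-independence statement drawn from the literature.
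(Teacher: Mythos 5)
Your proposal matches the paper's own proof: Corollary \ref{AHG:4-equality} is obtained there exactly by combining Theorem \ref{AHG:4-equality-CP} with Lemma \ref{Lem_KnownResults}, and your point that $K(G)$, $\FC(G)$, $\mathcal{A}(G)$ are intrinsic invariants of $G$ (so the quantifiers over $\omega$, $d_n$, and $X$ are absorbed by the single subgroup $K(G)$) is the same transitivity argument. Your parenthetical sketch of why $\ker\left(G \curvearrowright \partial X\right) = K(G)$ holds (no loxodromics or parabolics in the kernel, hence elliptic elements, bounded orbits, and finiteness via acylindricity plus normality) differs from the paper's internal proof of Lemma \ref{Lem_KnownResults}, which instead argues via Lemma \ref{infinite_normalsubgroup_acyl} that an infinite normal subgroup would itself act non-elementarily on $X$; but since both you and the paper invoke that lemma as a black box, this does not alter the proof of the corollary.
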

For precise definitions and more details of these subgroups, we refer to Section \ref{sec:4subgroups}.
 We want to point out that a $\delta$-hyperbolic space $X$ on which $G$ acts is not uniquely determined. Indeed, we can always choose $X$ to be a quasi-tree. See \cite{MR3685605} and \cite{MR4057354}.
 Furthermore, it is known that some acylindrically hyperbolic groups have various asymptotic cones. Our proof does not depend on the choice of the ultrafilter $\omega$ and the sequence $d_n$, so it directly implies the following.

\begin{cor}[Corollary \ref{AHG:Kernel_Invariant}]
 Let $G$ be a finitely generated acylindrically hyperbolic group. Then the kernel
 $$ \ker \left ( G \curvearrowright \Cone_{\omega}(G,d_n) \right ) $$ is invariant under ultrafilter $\omega$ and sequence $d_n$.
\end{cor}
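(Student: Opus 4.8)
The plan is to deduce this at once from Theorem \ref{AHG:4-equality-CP}. The crucial observation is that the subgroup $K(G)$ appearing on the right-hand side of that theorem is an intrinsic invariant of $G$: it is the unique maximal finite normal subgroup, whose existence and uniqueness for a finitely generated acylindrically hyperbolic group are guaranteed purely group-theoretically, independently of any choice of ultrafilter or scaling sequence. In particular, $K(G)$ carries no dependence on $\omega$ or $d_n$, so once the kernel has been identified with $K(G)$ the invariance is immediate.

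First I would fix two arbitrary admissible data sets $(\omega_1,(d_n^{(1)}))$ and $(\omega_2,(d_n^{(2)}))$, each consisting of a non-principal ultrafilter together with a scaling sequence. Applying Theorem \ref{AHG:4-equality-CP} to each separately yields
$$ \ker\left( G \curvearrowright \Cone_{\omega_1}(G,d_n^{(1)}) \right) = K(G) = \ker\left( G \curvearrowright \Cone_{\omega_2}(G,d_n^{(2)}) \right). $$
Since the two data sets were arbitrary, the kernel is one and the same fixed subgroup of $G$ for every choice of $\omega$ and $d_n$, which is exactly the claimed invariance.

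There is essentially no obstacle here, as all of the substantive work has already been carried out in establishing Theorem \ref{AHG:4-equality-CP}; the corollary is just the formal remark that a universally quantified equality to a choice-independent object forces the left-hand side to be choice-independent as well. The only point worth recording explicitly is that $K(G)$ is well-defined for every finitely generated acylindrically hyperbolic group, so that the displayed equalities are simultaneously meaningful for both data sets and the comparison through $K(G)$ is legitimate.
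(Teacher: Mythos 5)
Your proposal is correct and matches the paper's own reasoning exactly: the paper derives this corollary immediately from Theorem \ref{AHG:4-equality-CP}, since that theorem identifies the kernel with $K(G)$ for \emph{every} choice of ultrafilter $\omega$ and sequence $d_n$, and $K(G)$ is an intrinsic, choice-independent subgroup of $G$. Nothing further is needed.
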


Next, we establish a connection between asymptotic cones and other spaces at ``infinity" through the kernel. More specifically, we show that the kernel of the natural actions on asymptotic cones is equivalent to the kernel of the canonical actions on various spaces at `infinity".
First, we obtain the convergence group version corollary.

\begin{cor}[Convergence group action, Corollary \ref{CG:4-equality}]
 Let $G$ be a finitely generated group. Suppose that $G$ admits a convergence group action $ G \curvearrowright M$ with $|L(G)| > 2$. Then
$$ \ker \left ( G \curvearrowright \Cone_{\omega}(G,d_n) \right ) = \ker \left( G \curvearrowright L(G) \right ) = K(G) = \FC(G) = \mathcal{A}(G) $$
 for all ultrafilter $\omega$ and sequence $d_n$.
\end{cor}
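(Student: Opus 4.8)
The plan is to reduce every term except $\ker(G \curvearrowright L(G))$ to the already-established acylindrically hyperbolic case, and then to identify the limit-set kernel with $K(G)$ by a direct dynamical argument that uses only the convergence property. First I would record the input that a finitely generated group carrying a non-elementary convergence action (i.e.\ one with $|L(G)| > 2$) is acylindrically hyperbolic; this is a standard consequence of the structure theory of convergence groups, where the proper discontinuity on distinct triples supplies the needed independent (WPD-like) loxodromics. Granting this, the four-fold equality of Corollary \ref{AHG:4-equality} applies verbatim and gives
$$\ker(G \curvearrowright \Cone_{\omega}(G,d_n)) = K(G) = \FC(G) = \mathcal{A}(G)$$
for every $\omega$ and $d_n$. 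Hence the entire corollary follows once I prove $\ker(G \curvearrowright L(G)) = K(G)$, and I would establish this by two inclusions.

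For $\ker(G \curvearrowright L(G)) \subseteq K(G)$, set $N = \ker(G \curvearrowright L(G))$, which is normal as the kernel of $G \to \Homeo(L(G))$. Since $|L(G)| > 2$, I can choose three pairwise distinct points of $L(G)$, giving a point of the space $\Theta^3(M)$ of distinct triples of $M$. By the defining property of a convergence action (equivalently, its characterization as a properly discontinuous action on $\Theta^3(M)$), the stabilizer of this triple is finite. Every element of $N$ fixes $L(G)$ pointwise, hence fixes this triple, so $N$ is contained in a finite stabilizer and is therefore finite. Being finite and normal, $N \subseteq K(G)$.

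For the reverse inclusion $K(G) \subseteq N$, I would show that the finite normal subgroup $K(G)$ acts trivially on $L(G)$. Fix a loxodromic element $g$ with attracting and repelling fixed points $g^+, g^-$. Normality makes conjugation induce a homomorphism $\ZZ \to \Aut(K(G))$, $n \mapsto (k \mapsto g^{n} k g^{-n})$, whose target is finite, so some power $h = g^{m}$ with $m \geq 1$ commutes with every element of $K(G)$ and has the same fixed points as $g$. For $k \in K(G)$, from $k h k^{-1} = h$ and the equivariance of the attracting/repelling dynamics under conjugation (the attracting fixed point of $k h k^{-1}$ is $k(g^+)$), I obtain $k(g^+) = g^+$ and $k(g^-) = g^-$. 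As the action is non-elementary, the attracting fixed points of loxodromic elements are dense in $L(G)$, so $k$ fixes a dense subset; being a homeomorphism of $L(G)$, it then fixes $L(G)$ pointwise. Thus $k \in N$, giving $K(G) \subseteq N$ and hence $N = K(G)$, which completes the chain of equalities.

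The main obstacle is the first step: securing acylindrical hyperbolicity of a non-elementary convergence group, since this is exactly what licenses the transfer of Corollary \ref{AHG:4-equality}. The remaining dynamical facts — that $|L(G)| > 2$ forces $L(G)$ to be perfect and supplies loxodromic elements, that loxodromic fixed points are dense in $L(G)$, and that a finite-order element commuting with a loxodromic cannot swap its two fixed points (as the swap would reverse the attracting/repelling roles) — are standard, but I would take care to state them precisely so that the density and non-swapping inputs are unambiguous.
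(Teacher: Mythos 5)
Your proposal is correct, and its logical skeleton coincides with the paper's proof in Section \ref{sec:CG}: both transfer the equalities $\ker(G \curvearrowright \Cone_{\omega}(G,d_n)) = K(G) = \FC(G) = \mathcal{A}(G)$ from Corollary \ref{AHG:4-equality} via the fact that a non-elementary convergence group is acylindrically hyperbolic, and both obtain $\ker(G \curvearrowright L(G)) \subseteq K(G)$ from proper discontinuity of the action on distinct triples (the kernel fixes any three distinct points of $L(G)$, hence is finite, and it is normal). The one place you genuinely differ is the reverse inclusion $K(G) \subseteq \ker(G \curvearrowright L(G))$: the paper simply cites the literature for the statement that a finite normal subgroup of a non-elementary convergence group acts trivially on the limit set, whereas you prove it directly --- replacing a loxodromic $g$ by a power $h = g^{m}$ that centralizes the finite group $K(G)$ (possible because $\Aut(K(G))$ is finite), deducing $k(g^{\pm}) = g^{\pm}$ for every $k \in K(G)$ from the conjugation-equivariance of attracting/repelling points, and concluding by density of loxodromic fixed points in $L(G)$ plus continuity. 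That argument is sound and is essentially the proof behind the paper's citation; it buys a self-contained corollary at the cost of invoking two standard dynamical facts (existence of loxodromic elements and density of their fixed points in the limit set of a non-elementary convergence group). One cosmetic caveat: since the conclusion rests on $k$ fixing a dense set of loxodromic fixed points, your phrase ``fix a loxodromic element $g$'' must be read as letting $g$ range over all loxodromic elements; the argument is uniform in $g$, so nothing breaks, but the quantifier should be made explicit.
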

 Recall that a non-elementary convergence group is an acylindrically hyperbolic group \cite{MR3995017} and the limit set of a convergence group can be considered as the space at ``infinity". We point out that if $G$ is a non-elementary convergence group, then in general, $\partial X \neq L(G)$.
 Corollary \ref{CG:4-equality} can be obtained from the fact that the kernel of the action on its limit set is the same as $K(G)$.

 According to \cite{MR2005231}, if $G$ has a non-trivial Floyd boundary $\partial_F G$, then the action of $G$ on $\partial_F G$ is a convergence group action. All the terminology and notions about the Floyd boundary can be found in Section \ref{sec:GFB}. So, Corollary \ref{CG:4-equality} directly implies the following.

\begin{cor}[Floyd boundary, Corollary \ref{FB:4-equality}]
 Let $G$ be a finitely generated group. Suppose that $G$ has non-trivial Floyd boundary $\partial_F G$. Then 
 $$ \ker \left ( G \curvearrowright \Cone_{\omega}(G,d_n) \right ) = \ker \left( G \curvearrowright \partial_F G \right ) = K(G) = \FC(G) = \mathcal{A}(G)$$
 for all ultrafilter $\omega$ and sequence $d_n$.
\end{cor}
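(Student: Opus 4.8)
The plan is to reduce the statement entirely to the convergence-group case already established in Corollary \ref{CG:4-equality}, since the non-triviality hypothesis on the Floyd boundary is precisely what promotes $G$ to a non-elementary convergence group acting on a space at infinity. First I would recall, citing \cite{MR2005231}, that whenever the Floyd boundary $\partial_F G$ is non-trivial, the natural action $G \curvearrowright \partial_F G$ is a convergence group action. This is the single external input; everything else amounts to matching the hypotheses of the convergence-group corollary.

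The next step is to verify the size condition $|L(G)| > 2$ demanded by Corollary \ref{CG:4-equality}. Here I would invoke the standard dichotomy for Floyd boundaries: a non-trivial Floyd boundary is by definition one with more than two points, and in that regime it is known to be infinite, so in particular $|\partial_F G| > 2$. It then remains to relate the ambient space $M = \partial_F G$ to the limit set $L(G)$, since the convergence-group corollary is phrased in terms of $L(G)$ whereas the conclusion we want concerns $\ker\left(G \curvearrowright \partial_F G\right)$. I would resolve this by using that the convergence action on the Floyd boundary is minimal, so that its unique minimal closed invariant set $L(G)$ coincides with all of $\partial_F G$; consequently $L(G) = \partial_F G$ and $|L(G)| > 2$. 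I expect the identification $L(G) = \partial_F G$ to be the only point requiring genuine care, because in a general convergence action $g \in \ker\left(G \curvearrowright L(G)\right)$ need not lie in $\ker\left(G \curvearrowright M\right)$ when $L(G) \subsetneq M$; it is exactly the coincidence of limit set and boundary (again drawn from the structure theory of Floyd boundaries in \cite{MR2005231}) that makes the two kernels literally equal.

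With $M = \partial_F G$ and $L(G) = \partial_F G$ in hand, Corollary \ref{CG:4-equality} applies verbatim and yields
$$ \ker\left( G \curvearrowright \Cone_{\omega}(G,d_n)\right) = \ker\left( G \curvearrowright L(G)\right) = K(G) = \FC(G) = \mathcal{A}(G) $$
for every ultrafilter $\omega$ and sequence $d_n$. Since $L(G) = \partial_F G$, the middle term is precisely $\ker\left(G \curvearrowright \partial_F G\right)$, which is the desired chain of equalities. Once the identification of the limit set with the full boundary is secured, the conclusion is immediate, in agreement with the remark in the text that Corollary \ref{CG:4-equality} ``directly implies'' the statement.
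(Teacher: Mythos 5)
Your proposal is correct and follows essentially the same route as the paper: invoke Karlsson's result from \cite{MR2005231} that a non-trivial Floyd boundary makes $G \curvearrowright \partial_F G$ a convergence action, identify the limit set $L(G)$ with all of $\partial_F G$, and then apply Corollary \ref{CG:4-equality} verbatim. The only difference is cosmetic: the paper simply cites the identity $L(G) = \partial_F G$ as a known fact, whereas you justify it via minimality of the action, which is a reasonable way to fill in that step.
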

 
 It is a well-known fact that a $\CAT(0)$ group $G$ with a rank-one isometry is acylindrically hyperbolic, provided that $G$ is not virtually cyclic (\cite{MR4423140} and \cite{MR3849623}, see Proposition \ref{Sublinearly-Rank-one-Acylindrical} for the details. Motivated by this result, we obtain the following result.

\begin{cor}[CAT(0) group, Corollary \ref{CAT:4-equality} and Corollary \ref{CAT:boundaries}] \label{CAT:result}
Suppose that a finitely generated group $G$ acts geometrically on proper $\CAT(0)$ space $X$. Also, assume that $G$ contains a rank-one isometry and $G$ is not virtually cyclic.
 All subgroups in the following formula are the same for any ultrafilter $\omega$, sequence $d_n$, and sublinear function $\kappa$.
\begin{align*}
 \ker \left ( G \curvearrowright \Cone_{\omega}(G,d_n) \right ) &= \ker \left( G \curvearrowright \partial_{v} X \right ) = K(G) = \FC(G) = \mathcal{A} (G) \\
 &= \ker \left( G \curvearrowright \partial_{\kappa} X \right )  \\ 
 &= \ker \left( G \curvearrowright \partial_{*} X \right )
\end{align*}
Here, $\partial_v X$, $\partial_{*} X$ and $\partial_{\kappa} X$ are the visual, contracting and sublinearly Morse boundary of $X$, respectively.
\end{cor}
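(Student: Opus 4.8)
The plan is to reduce the statement to the acylindrically hyperbolic case already settled, and then to treat the three $\CAT(0)$ boundaries through one common dynamical mechanism. First I would apply Proposition \ref{Sublinearly-Rank-one-Acylindrical}: because $G$ acts geometrically on the proper $\CAT(0)$ space $X$, contains a rank-one isometry, and is not virtually cyclic, $G$ is acylindrically hyperbolic. Corollary \ref{AHG:4-equality} then yields
$$\ker\left(G\curvearrowright\Cone_\omega(G,d_n)\right)=K(G)=\FC(G)=\mathcal{A}(G)$$
for every ultrafilter $\omega$ and sequence $d_n$, with no dependence on these choices. What remains is to prove that the three boundary kernels $\ker(G\curvearrowright\partial_v X)$, $\ker(G\curvearrowright\partial_* X)$ and $\ker(G\curvearrowright\partial_\kappa X)$ all coincide with $K(G)$; since the answer $K(G)$ does not involve $\kappa$, the asserted uniformity over $\kappa$ will be automatic.

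For the inclusion $K(G)\subseteq\ker$ for each boundary, I would exploit that $K(G)$ is finite and normal together with the North--South dynamics of rank-one isometries. Fix a rank-one isometry $h$ with endpoints $h^{\pm}$ and let $g\in K(G)$. Since $K(G)$ is normal, the map $n\mapsto h^{n}gh^{-n}$ takes values in the finite set $K(G)$, so it is not injective; pigeonholing produces $k\geq 1$ with $h^{k}gh^{-k}=g$, i.e.\ $g$ commutes with $h^{k}$. Then $gh^{k}g^{-1}=h^{k}$, and comparing attracting fixed points gives $g\cdot h^{+}=h^{+}$ (the attracting fixed point of $gh^{k}g^{-1}$ is $g\cdot h^{+}$, while $h^{+}$ is that of $h^{k}$); symmetrically $g\cdot h^{-}=h^{-}$. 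Thus every $g\in K(G)$ fixes the endpoints of every rank-one axis. As these endpoints are dense in each of the three boundaries and $G$ acts by homeomorphisms, $K(G)$ acts trivially, giving $K(G)\subseteq\ker$.

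For the reverse inclusion I would show that each boundary kernel is a finite normal subgroup and hence is contained in the maximal finite normal subgroup $K(G)$. An element $g$ in the kernel fixes $h^{\pm}$ for every rank-one $h$; choosing two independent rank-one isometries (available by non-elementarity) exhibits $g$ as fixing at least three distinct endpoints, which for the convergence-type dynamics carried by the rank-one directions forces $g$ to be elliptic, and acylindricity of the induced action then bounds the kernel, making it finite. Being normal, it lies in $K(G)$; combined with the previous paragraph this gives equality for all three boundaries.

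It then remains to verify, boundary by boundary, the two dynamical inputs used above: North--South dynamics of rank-one isometries and density of their endpoints. These hold on the visual boundary $\partial_v X$ (classically, Ballmann; Hamenst\"adt), on the contracting boundary $\partial_* X$ (Charney--Sultan, Cordes), and on the sublinearly Morse boundary $\partial_\kappa X$ (Qing--Rafi), the density coming from cocompactness together with genericity of rank-one directions. The main obstacle is precisely this last step for $\partial_* X$ and $\partial_\kappa X$: unlike $\partial_v X$, these boundaries are not compact and carry finer topologies, so both the continuity-and-density argument for $K(G)\subseteq\ker$ and the ``trivial action forces elliptic, hence finite kernel'' argument for the reverse inclusion must be carried out inside the Cordes and Qing--Rafi frameworks rather than by naive compactness; extracting the uniformity in $\kappa$ from the Qing--Rafi theory is the delicate point.
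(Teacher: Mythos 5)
Your reduction to the acylindrically hyperbolic case is exactly the paper's first step (Proposition \ref{Sublinearly-Rank-one-Acylindrical} plus Corollary \ref{AHG:4-equality}), but from there you diverge: the paper never works with rank-one dynamics on the boundary directly. It proves $\FC(G) \subset \ker(G \curvearrowright \partial_v X)$ by a bare-hands cocompactness estimate, proves $\ker(G \curvearrowright \partial_v X) \subset K(G)$ by passing to the curtain model $X_D$ of \cite{MR4753310} (an auxiliary $\delta$-hyperbolic space on which WPD elements of infinite normal subgroups survive, so the kernel on $\partial X_D$ is finite --- Lemma \ref{X_D-kernel}), and then handles $\partial_* X$ and $\partial_\kappa X$ purely formally, via the set inclusions $\partial_* X \subset \partial_\kappa X \subset \partial_v X$ together with Murray's theorem that $\partial_* X$ is dense in $\partial_v X$ in the cone topology. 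Your alternative --- the pigeonhole/commutation trick showing $K(G)$ fixes every rank-one endpoint pair, density of such pairs (Ballmann) for the forward inclusion on $\partial_v X$, and finiteness of the stabilizer of two independent rank-one pairs for the reverse --- is a genuinely different, classical route, and it can be made to work. Its weakest written step is the reverse inclusion: ``convergence-type dynamics forces $g$ to be elliptic, and acylindricity bounds the kernel'' should be replaced by the precise mechanism (fixing $h^{\pm}$ forces $g$ to preserve the unique geodesic joining them, an element fixing all four endpoints of two independent rank-one axes must translate along two non-parallel axes unless it fixes one of them pointwise, and point stabilizers are finite by properness of the geometric action); this is the $\CAT(0)$ analogue of the paper's Lemma \ref{two_points_stabilizer}, which the paper states only for acylindrical actions on hyperbolic spaces and deliberately sidesteps here via $X_D$.

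The one genuine gap in your proposal is the final paragraph, and it is self-inflicted. You assert that density and North--South arguments ``must be carried out inside the Cordes and Qing--Rafi frameworks'' for $\partial_* X$ and $\partial_\kappa X$, flag this as the main obstacle, and leave it unresolved. None of it is needed, because membership in a kernel is the set-theoretic condition of fixing every point and is therefore insensitive to which topology the boundary carries. For the forward inclusion: once $K(G)$ fixes $\partial_v X$ pointwise, it fixes the subsets $\partial_\kappa X$ and $\partial_* X$ pointwise, full stop. For the reverse inclusion: rank-one axes are contracting, so their endpoints already lie in $\partial_* X \subset \partial_\kappa X$, and hence the kernel of the action on either of these boundaries fixes the four endpoints of two independent rank-one isometries, so your finiteness argument applies verbatim. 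With these two observations the ``delicate point'' about finer topologies and uniformity in $\kappa$ evaporates: the only density statement required lives in the cone topology on $\partial_v X$ (yours via Ballmann, or the paper's via Murray), and the uniformity in $\kappa$ is automatic because every kernel in sight equals $K(G)$, which does not depend on $\kappa$.
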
 

 So far, we have concentrated only on the canonical action on asymptotic cones. Now we will introduce another action on asymptotic cones known as Paulin's construction.
Briefly speaking, Paulin's construction is a ``twisted" action. This action is constructed by infinitely many (outer) automorphisms so we need the condition that its outer automorphism group $\Out(G)$ should be infinite.
 Presumably, the main difference between the natural action and this new action is the existence of a global fixed point.
 It immediately follows that the natural action must have a global fixed point, but we eliminate global fixed points via the ``twisted" method. 
 Our second main result says that two kernels of these two actions on asymptotic cones coincide so 
the kernel of Paulin’s construction can also be characterized as in Corollary \ref{AHG:4-equality}.

 \begin{thm} [Theorem \ref{Kernel_Paulin}]
 Let $G$ be a finitely generated acylindrically hyperbolic group satisfying that its outer automorphism group $ \Out(G)$ is infinite.
 For any sequence $ [\phi_1] , \ [\phi_2] , \cdots $ in $\Out(G)$ with $[\phi_i]\neq[\phi_j]$, 
 there exist automorphism representatives $\phi_1 , \ \phi_2 , \cdots \in \Aut(G)$ such that the kernels of two actions on an asymptotic cone are the same, that is, 
 $$ \ker \left ( G \curvearrowright \Cone_{\omega}(G,d_n) \right ) = \ker \left ( G \overset{p}{\curvearrowright} \Cone_{\omega}(G) \right )$$ 
 for any ultrafilter $\omega$ and sequence $d_n$.
\end{thm}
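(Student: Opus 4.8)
The plan is to reduce the theorem to the single statement $\ker\!\left(G\overset{p}{\curvearrowright}\Cone_\omega(G)\right)=K(G)$; once this is shown, Theorem \ref{AHG:4-equality-CP} gives $\ker\!\left(G\curvearrowright\Cone_\omega(G,d_n)\right)=K(G)$ for every $\omega$ and $d_n$, and comparing the two yields the asserted equality. First I would recall Paulin's construction: given distinct classes $[\phi_i]\in\Out(G)$, one forms the twisted displacements $\lambda_n=\inf_{x\in G}\max_{s\in S}d(\phi_n(s)x,x)$ for a fixed finite generating set $S$, and, as in Paulin's construction, distinctness of the classes forces $\lambda_n\to\infty$. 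Using the freedom to choose the representatives $\phi_n\in\Aut(G)$ inside their outer classes, I replace each $\phi_n$ by an inner conjugate so that the infimum is (almost) attained at $x=e$; this normalizes the basepoint of the cone to the class of the constant sequence $(e)$ and gives $\lambda_n\asymp\max_{s\in S}\ell(\phi_n(s))$, where $\ell$ is word length. With this normalization the twisted action is $g\cdot[(y_n)]=[(\phi_n(g)y_n)]$, and the displacement of a point is
\[
d\!\left(g\cdot[(y_n)],\,[(y_n)]\right)=\lim_\omega\frac{\ell\!\left(y_n^{-1}\phi_n(g)y_n\right)}{\lambda_n}.
\]
Post-composing the $\phi_n$ with inner automorphisms reparametrizes the admissible sequences $(y_n)$, and I use this freedom precisely to realize the normalization above, which is what makes Paulin's construction well defined; the resulting kernel is what we must identify.

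For the inclusion $K(G)\subseteq\ker(p)$ I would use that $K(G)$, being the \emph{unique} maximal finite normal subgroup, is characteristic: each $\phi_n$ carries a finite normal subgroup to a finite normal subgroup, so $\phi_n(K(G))=K(G)$. Hence for $g\in K(G)$ every $\phi_n(g)$ lies in $K(G)$, and since $K(G)$ is normal the conjugates $y_n^{-1}\phi_n(g)y_n$ again lie in the finite set $K(G)$; their word lengths are bounded by $\max_{k\in K(G)}\ell(k)$, so the displacement above is $\le\lim_\omega\bigl(\mathrm{const}/\lambda_n\bigr)=0$ and $g$ acts trivially.

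The substantial direction is $\ker(p)\subseteq K(G)$. Fix $g\notin K(G)$; by Corollary \ref{AHG:4-equality} we have $K(G)=\FC(G)$, so $g$ has infinite conjugacy class, and since $K(G)$ is characteristic the same holds for every $\phi_n(g)$. The quantity $t:=\lim_\omega\ell(\phi_n(g))/\lambda_n$ exists (the sequence is bounded by a multiple of $\ell(g)$), and I split on its value. If $t>0$ then the basepoint already moves: taking $y_n=e$ gives displacement $\lim_\omega\ell(\phi_n(g))/\lambda_n=t>0$, so $g\notin\ker(p)$. If $t=0$, so $\ell(\phi_n(g))=o(\lambda_n)$ along $\omega$, I would reuse the mechanism from the proof of Theorem \ref{AHG:4-equality-CP}: since $G$ is acylindrically hyperbolic it contains Morse (generalized loxodromic) elements, and conjugating a non-$\FC$ element by high powers of such an element forces its word length to grow linearly. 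Concretely, I would choose for each $n$ a Morse element $h_n$ (a bounded-length conjugate of one fixed Morse element $h_0$, hence with uniform Morse constants and translation length $\tau$) whose fixed-point set is not preserved by $\phi_n(g)$, and set $y_n=h_n^{k_n}$ with $k_n\asymp\lambda_n/\tau$. The Morse property then yields a word-length lower bound $\ell\!\left(h_n^{-k_n}\phi_n(g)h_n^{k_n}\right)\ge c\,k_n-C\,\ell(\phi_n(g))\gtrsim\lambda_n-o(\lambda_n)$, while $\ell(y_n)\asymp\lambda_n$ keeps $[(y_n)]$ in the cone; hence $g$ moves this point and $g\notin\ker(p)$.

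The hard part will be the uniformity in $n$ in this last case. Unlike the proof of Theorem \ref{AHG:4-equality-CP}, where a single fixed element is conjugated, here $\phi_n(g)$ changes with $n$ and is the image of $g$ under increasingly wild automorphisms, so I must guarantee (i) that the Morse lower bound holds with constants independent of $n$, and (ii) that for $\omega$-almost every $n$ one can select a transverse Morse element $h_n$ with $\ell(h_n)$ bounded. For (i) the key point is that conjugates of the fixed $h_0$ share its Morse constants and that the only $n$-dependent loss is the off-axis error, which is at most $O(\ell(\phi_n(g)))=o(\lambda_n)$ precisely because we are in the case $t=0$; this is why the dichotomy on $t$ is forced on us. For (ii) one uses that $\phi_n(g)\notin\FC(G)$ has infinitely many conjugates while each elementary closure $E(h)$ is virtually cyclic, so a short conjugator moving $\phi_n(g)$ off the axis of $h_0$ exists; making this conjugator's length uniformly $O(\lambda_n)$ is the delicate estimate, and I expect it to be the crux. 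Once (i) and (ii) are in place, $\ker(p)=K(G)$ follows, and with Theorem \ref{AHG:4-equality-CP} and Corollary \ref{AHG:Kernel_Invariant} the stated equality of kernels holds for every $\omega$ and $d_n$.
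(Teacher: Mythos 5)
Your reduction to showing $\ker\left(G \overset{p}{\curvearrowright} \Cone_{\omega}(G)\right) = K(G)$, your normalization of representatives inside each outer class (so that $\max_{s\in S}\ell(\phi_n(s))$ is comparable to $d_n$), and your argument for $K(G)\subseteq\ker(p)$ via the characteristic property all match the paper's strategy (the paper phrases the easy inclusion through Lemma \ref{Kernel_Characteristic} applied to $N_1=\ker(G\curvearrowright\Cone_{\omega}(G,d_n))$, which equals $K(G)$ by Theorem \ref{AHG:4-equality-CP}). The problem is the hard inclusion. In your case $t=0$ you propose to conjugate $\phi_n(g)$ by powers of Morse elements $h_n$ chosen per index $n$, and you yourself flag that this requires (i) Morse lower bounds with constants independent of $n$ and (ii) transverse elements $h_n$ of uniformly bounded length, calling (ii) ``the crux.'' Those two claims are never established, and they are not routine: $\phi_n(g)$ ranges over images of $g$ under infinitely many essentially distinct automorphisms, so there is no a priori control on how long a conjugator must be to move $\phi_n(g)$ off the axis of a fixed $h_0$, nor on the constants in the resulting length estimates. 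As written, your proposal proves only the easy inclusion and the sub-case $t>0$; the statement remains open exactly where you located the difficulty.

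The idea you are missing is the paper's Lemma \ref{finite_stabilizer_kernel} (built on Lemma \ref{two_points_stabilizer}): for a non-elementary acylindrical action on a $\delta$-hyperbolic space $X$, the kernel of $G\curvearrowright\partial X$ equals the pointwise stabilizer of the fixed points of \emph{finitely many} loxodromic elements $l_1,\dots,l_p$. Since each $\phi_n(g)$ lies outside $\ker(G\curvearrowright\partial X)$ (by Corollary \ref{AHG:4-equality} together with the characteristic property of the kernel), each $\phi_n(g)$ must move one of these finitely many boundary points, so by pigeonhole over the ultrafilter a single fixed loxodromic $l=l_j$ is moved by $\phi_n(g)$ for $\omega$-almost every $n$. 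This dissolves the uniformity problem entirely: the test point is just $\left\{l^{\lfloor d_n\rfloor}\right\}$, the lower bound on $\| l^{-\lfloor d_n\rfloor}\phi_n(g)l^{\lfloor d_n\rfloor}\|_S$ comes from boundedness of Gromov products after choosing $X$ to be a Cayley graph of $G$ with respect to a larger generating set (Osin's Theorems 1.2 and 5.4), so that $d_X$ is dominated by $d_S$, and the finiteness of the displacement comes from the same normalization of representatives you already set up. Your dichotomy on $t$ then becomes unnecessary. To salvage your Morse-element approach you would in effect have to prove a statement equivalent to Lemma \ref{finite_stabilizer_kernel}; without it, the per-$n$ choice of $h_n$ cannot be made uniform.
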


 In the other direction, we concentrate on the kernel of various (non-acylindrically hyperbolic) groups and prove that if $G$ is a finitely generated infinite virtually nilpotent group, then the kernel of $G$ on its asymptotic cone is infinite.
 Using this fact, we can show that the kernel $ \ker \left ( G \curvearrowright \Cone_{\omega}(G,d_n) \right ) $ determines whether the actions are non-elementary or not, as follows.

\begin{cor} [Corollay \ref{elementary-and-infinity}]
Let $G$ be an infinite hyperbolic group. Then $G$ is non-elementary if and only if the kernel $ \ker \left ( G \curvearrowright \Cone_{\omega}(G,d_n) \right )$ is finite.
  
 Furthermore, suppose that finitely generated group $G$ acts acylindrically on some $\delta$-hyperbolic space $X$ and contains a loxodromic element. Then the action of $G$ on $X$ is non-elementary (so $G$ is acylindrically hyperbolic) if and only if $ \ker \left ( G \curvearrowright \Cone_{\omega}(G,d_n) \right )$ is finite.
\end{cor}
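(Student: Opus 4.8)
The plan is to deduce the corollary from two inputs that are already available. The first is Theorem~\ref{AHG:4-equality-CP}, which gives $\ker\left(G \curvearrowright \Cone_{\omega}(G,d_n)\right) = K(G)$, and in particular a \emph{finite} kernel, whenever $G$ is acylindrically hyperbolic. The second is the result proved earlier in the paper that an infinite finitely generated virtually nilpotent group has an \emph{infinite} kernel on each of its asymptotic cones. Once these are in hand the corollary reduces to a case analysis, whose content is to check that every ``elementary'' alternative forces the relevant group to be infinite virtually cyclic, hence virtually nilpotent, so that the second input applies.

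For the first statement I would use the standard dichotomy for infinite hyperbolic groups: such a $G$ is either non-elementary or (infinite) elementary. If $G$ is non-elementary, then it is acylindrically hyperbolic, so Theorem~\ref{AHG:4-equality-CP} gives $\ker\left(G \curvearrowright \Cone_{\omega}(G,d_n)\right) = K(G)$, which is finite. If instead $G$ is elementary, then being infinite it is two-ended, i.e. virtually $\ZZ$, and therefore virtually nilpotent, so its kernel is infinite. Reading the second case contrapositively, finiteness of the kernel forces $G$ to be non-elementary, which yields the claimed equivalence.

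For the second statement I would invoke Osin's classification of acylindrical actions on hyperbolic spaces: an acylindrical $G$-action on a $\delta$-hyperbolic space is either elliptic (bounded orbits), lineal with $G$ virtually cyclic, or of general (non-elementary) type, and no horocyclic or quasi-parabolic possibilities occur. Since $G$ contains a loxodromic element, the orbits are unbounded and the elliptic case is excluded. If the action is non-elementary, then $G$ is acylindrically hyperbolic and Theorem~\ref{AHG:4-equality-CP} again yields a finite kernel equal to $K(G)$. Otherwise the action is lineal, so $G$ is virtually cyclic; because it contains an infinite-order loxodromic element it is infinite, hence virtually $\ZZ$ and so virtually nilpotent, and its kernel is infinite. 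Taking the contrapositive in the lineal case gives the stated equivalence.

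The bookkeeping here is light, and the step that really carries the argument is verifying that the only elementary outcomes are infinite virtually cyclic groups, so that the virtually nilpotent input (rather than some finer analysis) suffices. This is exactly what Osin's trichotomy guarantees, by ruling out the horocyclic and quasi-parabolic cases for acylindrical actions; the genuinely substantive ingredient behind the whole corollary remains the earlier theorem that infinite virtually nilpotent groups have infinite kernel on their asymptotic cones.
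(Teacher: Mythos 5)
Your proposal is correct and follows essentially the same route as the paper: the paper also combines the dichotomy for infinite hyperbolic groups (and Osin's trichotomy for acylindrical actions, ruling out parabolic-type behavior) with Theorem \ref{AHG:4-equality-CP} for the non-elementary case and Proposition \ref{virtual-nilpotent} applied to the virtually cyclic (hence virtually nilpotent) case. The only difference is cosmetic: the paper leaves this case analysis implicit after recalling the two classification facts, while you have written it out in full.
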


 Then we also study the equalities in \eqref{Main_Equalities} for various groups. For this purpose, we design the following definition.
\begin{defn} [Definition \ref{Condition ast}]
If a finitely generated group $G$ has $K(G)$ and satisfies
\begin{align*}
 \begin{aligned}
\ker \left ( G \curvearrowright \Cone_{\omega}(G,d_n) \right ) = \ker \left( G \curvearrowright \partial X \right ) = K(G) = \FC(G) = \mathcal{A}(G)
\end{aligned} \tag{$\ast$}
\end{align*}
for some $\delta$-hyperbolic space $X$, 
then we say that $G$ satisfies conidition \eqref{Main_Equalities}.
\end{defn}
 We explore condition \eqref{Main_Equalities} and in particular, we show that the braided Thompson group $BV$ satisfies condition \eqref{Main_Equalities} but it is not acylindrically hyperbolic.
 
 The paper is organized as follows. In Section \ref{sec:prelim}, we briefly recall the definition and properties of asymptotic cones, acylindrically hyperbolic groups, and related topics. 
 In Section \ref{sec:4subgroups}, we review some remarks about subgroups in Corollary \ref{AHG:4-equality}, mainly $K(G), \ \mathcal{A}(G)$, and $\FC(G)$. 
 We also include some short proofs of easily obtained inclusions and equalities in this section.
 We prove the main theorem, Theorem \ref{AHG:4-equality-CP} in Section \ref{sec:AHG}.
 As an application, we prove Corollary \ref{CG:4-equality} (a non-elementary convergence group), Corollary \ref{FB:4-equality} (a group with a non-trivial Floyd boundary), and Corollary \ref{CAT:result} (a $\CAT$(0) group with a rank-one isometry including a Coxeter group) in Section \ref{sec:App1}. 
 In Section \ref{sec:Paulin}, we characterize the kernel of Paulin's construction, and in Section \ref{sec:action}, we consider general groups and find some algebraic conditions to disturb the equivalence. In this section, we record the kernels of some interesting groups acting on their asymptotic cones.
 \vspace{0.6cm} 

\noindent \textbf{Acknowledgment} We would like to thank Inhyeok Choi for the helpful comments.
In particular, he suggested Lemma \ref{QIembeddingL} which shortened the proof of our first main theorem.
The authors would like to express deep appreciation to Anthony Genevois for a lot of invaluable comments and discussions. 
We are grateful to Lvzhou Chen for the comments on the big mapping class group and the ray graph.
The authors are grateful to Sang-hyun Kim and Mladen Bestvina for the useful discussions and suggestions.
We express our gratitude to Gye-Seon Lee for suggesting the direction for the Coxeter group, Corollary \ref{Application:Coxeter}.
We would also like to thank KyeongRo Kim, Donggyun Seo, Hongtaek Jung, and Juhun Baik for fruitful conversations.
We are also grateful to Dongha Lee for carefully reading the initial draft and for suggestions to improve the preprint.
The authors would like to thank anonymous referees for pointing out the missing references and suggestions for qualitative improvement.
This work was partially supported by the National Research
Foundation of Korea (NRF) grant funded by the Korea government (MSIT)
(No. 2020R1C1C1A01006912).


\section{Preliminaries} \label{sec:prelim}

\subsection{Cayley graphs and $\delta$-hyperbolic spaces}
 In this subsection, we briefly recall the definition of Cayley graphs and $\delta$-hyperbolic spaces. We also introduce our notations.

\begin{defn}
 Let $G$ be a group with a fixed generating set $S$. The \emph{Cayley graph} for $G$ with respect to $S$, denoted by $\Gamma(G,S)$ is a graph with the vertex set $G$ and the edge set given by $$\{ (g,gs) : g \in G , s \in S \}.$$
\end{defn}

 Then it is an easy fact that $\Gamma(G,S)$ is connected, and locally finite whenever $S$ is a finite set. If $S$ and $T$ are finite generating sets for a group $G$, 
 then usually $\Gamma(G,S)$ is different from the graph $\Gamma(G,T)$ but they are quasi-isometric when both $S$ and $T$ are finite.
 
 Clearly, a group $G$ acts on the Cayley graph $\Gamma(G,S)$ by $g \cdot x = gx$. With the metric $$d_S(g,h)=|| g^{-1} h ||_S$$ where $||x||_S$ means the metric between $x \in G$ and the identity $e$ in the graph $\Gamma(G,S)$, this action is isometric and cocompact. 
 When we write the word metric, we sometimes use $||x||$ when there is no confusion for the generating set or the generating set is not important.

 Now we recall $\delta$-hyperbolic spaces and related notions.
 
\begin{defn}
 A metric space $X$ is \emph{$\delta$-hyperbolic} if $X$ is geodesic and there exists some $L \geq 0$ such that for any geodesic triangle $(\gamma_1 , \gamma_2 , \gamma_3 )$ in $X$, the geodesic $\gamma_i$ is contained in a $L$-neighborhood of the other two for all $i=1,2,3$. 
In particular, we say that $X$ is a real tree if it is $\delta$-hyperbolic with $L=0$.
\end{defn}

 A $\delta$-hyperbolic space $X$ has a typical and well-studied boundary called the \emph{Gromov boundary} and we denote it by $\partial X$. To define the Gromov boundary we first recall the \emph{Gromov product}. For a triple $x,y,z$ in $X$, define the value defined as $$(x|y)_z := \frac{1}{2} \left ( d(x,z) + d(y,z) - d(x,y) \right) \in \RR$$ is called the Gromov product of three points $x,y,z$.

 Now we define the Gromov boundary using the Gromov product. First, we say that a sequence $[ x_n ]$ in $X$ converges at infinity if $$\lim_{(m,n)\to \infty} (x_m | x_n)_p = \infty $$ for fixed $p \in X$. Let $\mathcal{S}$ be a set of sequences that converge at infinity. We declare $[x_n] \approx [ y_n ]$ if and only if $$\lim_{n\to \infty} (x_n | x_n)_p = \infty.$$
 Then the relation $\approx$ is an equivalence relation. The Gromov boundary of $X$, $\partial X$ is defined by $$\partial X := \mathcal{S} / \approx . $$
 
It is facts that $[ x_n ] \not \approx [ y_n ]$ if and only if $ \sup_{m,n} \ (x_m|y_n)_p < \infty ,$
and the Gromov boundary does not depend on the choice of the basepoint $p$.

 Let us review the terminology of isometry on a $\delta$-hyperbolic space $X$. An isometry $f$ on $X$ is called \emph{elliptic} if the orbit of $f$ is bounded for some, equivalently any, orbit. $f$ is called \emph{parabolic} if $f$ has exactly one fixed point in the boundary $\partial X$. Lastly, we say that $f$ is \emph{loxodromic} if it has exactly two fixed points in the boundary. Recall that $f$ is loxodromic if and only if the map $\ZZ \to X$ given by $n \mapsto f^n \cdot x$ is a quasi-isometric embedding for any basepoint $x$ in $X$.
 We say that two loxodromic elements $f, \ g$ are \emph{independent} if their fixed point sets are disjoint.

 Suppose that a group $G$ acts isometrically on a $\delta$-hyperbolic group $X$. Then there exists a canonical $G$-action on $\partial X$. For $g \in G$ and $[x_n] \in \partial X$, define
 $$ g \cdot [x_n] := [g \cdot x_n].$$
Then this action is well-defined.

\subsection{Asymptotic cones} \label{Pre:Asymptotic}
 Asymptotic cones are the main ingredient of this paper so we briefly recall that definition of asymptotic cones. Since asymptotic cones are special kinds of ultralimits of metric spaces, we start with the definition of ultrafilters which is a useful concept to define ultralimits.
 
 \begin{defn}
An \emph{ultrafilter} $\omega$ on $\NN$ is a non-empty set of subsets in $\NN$ satisfying
\begin{itemize}
	\item $\emptyset \not \in \omega$.
	\item If $A,B \in \omega$, then $A \cap B \in \omega$.
	\item If $A \in \omega$ and $A \subset B \subset \NN$, then $B \in \omega$.
	\item For any $A \subset \NN$, either $A \in \omega$ or $\NN - A \in \omega$.
\end{itemize}
Also, we say that an ultrafilter $\omega$ is \emph{non-principal} if $F \not \in \omega$ for all finite subsets $F$ in $\NN$.
\end{defn}

 Recall that, from the first and second conditions, it cannot happen that $A \in \omega$ and $\NN-A \in \omega$ hold simultaneously.
 Now we will define an ultralimit of sequence in a metric space. Using this, we can define an ultralimit of metric spaces.

\begin{defn}
 Let $\omega$ be a non-principal ultrafilter on $\NN$. 
When $\left \{ x_n \right \}$ is a sequence of points in a metric space $(X,d)$, 
a point $x \in X$ is called the \emph{ultralimit} of $\left \{x_n\right \}$, denoted by $x:=\lim_{\omega}x_n$ if for every $\epsilon >0$, $$\left \{ n \in \NN : d(x_n,x) \leq \epsilon \right \} \in \omega .$$
\end{defn}

 Similarly to the usual limit, if an ultralimit exists, then it is unique but in general, the ultralimit may not exist. 
However, when a metric space $X$ is compact, it is known that the ultralimit always exists.
This implies that the ultralimit of any bounded sequence in $\RR$ always exists.

 Suppose that a sequence $\{ x_n \}$ converges to $x$ in the usual limit sense. Then for any non-principal ultrafilter $\omega$, $\lim_{\omega} x_n = x$. So, we can think of an ultralimit as a generalization of the usual limits, and this explains why we use non-principal ultrafilters. One of the difficulties of an ultralimit is that an ultralimit depends on the choice of an ultrafilter. 
 Now we define ultralimits of metric spaces.

\begin{defn}
Let $\left( X_n , d_n \right)$ be a sequence of metric spaces. Choose $p_n \in X_n$ for each $n \in \NN$. We call the sequence $\{ p_n \}$ the \emph{observation sequence}.
Suppose that $\omega$ is a non-principal ultrafilter on $\NN$.

A sequence $\{x_n\}$ (here $x_n \in X_n$) is \emph{admissible} if the sequence $\left \{ d_n(x_n,p_n) \right \}$ is bounded. 
 Let $\mathcal{A}$ be the set of all admissible sequences. 
 We define $$X_{\infty} := \mathcal{A} / \sim $$ where $\{x_n\} \sim \{y_n\}$ if and only if $\lim_{\omega}d_n( x_n , y_n )=0$. 
Then the space $X_{\infty}$ is a metric space with the metric defined by $$d_{\infty} \left( \{x_n\},\{y_n\} \right) := \lim_{\omega} d_n\left( x_n, y_n \right).$$
 We say that the metric space $(X_{\infty} , d_{\infty})$ is the \emph{ultralimit} of $\left \{ (X_n,d_n) \right \}$ and denoted by $\lim_{\omega}(X_n,d_n,p_n) := (X_{\infty},d_{\infty}).$
\end{defn}

 Clearly, the ultralimit of metric spaces depends on not only $\left \{ (X_n,d_n)\right \}$ but also the choice of the observation sequence $\{p_n\}$ and ultrafilter $\omega$.

\begin{defn}
 Let $(X,d)$ be a metric space, $\omega$ be a non-principal ultrafilter on $\NN$ and $\{p_n\}$ be a sequence in $X$. Suppose that a sequence $\{d_n\}$ is an unbounded non-decreasing sequence of positive real numbers. 
Then the ultralimit $\lim_{\omega}\left(X,\frac{1}{d_n}d , p_n \right)$ is called the \emph{asymptotic cone} and denoted by $\Cone_{\omega} \left( X , d_n, p_n \right)$.
\end{defn}

 Let $X$ be a $\delta$-hyperbolic space. Recall that both elements of the Gromov boundary of $X$ and an asymptotic cone of $X$ can be considered as sequences in $X$. To avoid confusion, we denote an element of the Gromov boundary by $[x_n] \in \partial X$ and an element of an asymptotic cone by $\{ x_n \} \in \Cone_{\omega} \left( X , d_n, p_n \right)$.

 We define an asymptotic cone of $G$ by an asymptotic cone of its Cayley graph.

\begin{defn}
 Let $G$ be a finitely generated group with a finite generating set $S$. 
An \emph{asymptotic cone} of $G$ is $$\Cone_{\omega}(G,d_n,g_n) := \Cone_{\omega} \left( \Gamma(G,S), d_n, g_n \right)$$ for unbounded non-decreasing sequence $\{d_n\}$ and the observation sequence $\{ g_n \}$.
\end{defn}
 
 Since asymptotic cones are quasi-isometry invariant up to bi-Lipschitz, this definition is well-defined and we can omit a finite generating set $S$. Also, we can remove the observation sequence $g_n$ from the notation. 
 Recall that a metric space $X$ is quasi-homogeneous if there exists a homogeneous space $Y \subset X$ and a constant $C>0$ such that for any $x \in X, d(x,Y) < C$. It is known that $\Cone(X,d_n,p_n)$ does not depend on the choice of the observation sequence when $X$ is quasi-homogeneous. 
 Thus any asymptotic cone of a group does not depend on the choice of the observation sequence $\{ g_n \}$ so we can simply write $\Cone_{\omega} (G,d_n).$ 

 However, an asymptotic cone of a group still depends on the choice of a real sequence $d_n$ and ultrafilters. First, S. Thomas and B. Velickovic constructed a group with two distinct (non-homeomorphic) asymptotic cones \cite{MR1734187}. Their group is only finitely generated, not finitely presented. Later, a finitely presented group with two non-homeomorphic asymptotic cones is constructed \cite{MR2310154}. This group has a simply connected space (not a real tree) and a non-simply connected space as an asymptotic cone.
 
However, it is known that any asymptotic cone of hyperbolic groups is unique and it is a real tree. In general, the converse is not true (recall that a group is called \emph{lacunary hyperbolic} if one of its asymptotic cones is a real tree), but when a group $G$ is finitely presented, then $G$ is hyperbolic if and only if one of its asymptotic cones is a real tree. 
Furthermore, A. Sisto completely classified asymptotic cones of hyperbolic groups as follows.
If a real tree $X$ is the asymptotic cone of a group, then it is a point, a line, or the $2^{\aleph_0}$ universal tree. We refer to Corollary 5.9 in \cite{MR3047634} for further details.

 We end this subsection by introducing the natural action of $G$ on its asymptotic cone.
 Let $G$ be a group and consider one of its asymptotic cones $\Cone_{\omega}(G,d_n)$. Note that an element in the asymptotic cone can be considered as an (admissible) sequence of $G$ so let $\{ x_n \} \in \Cone_{\omega}(G,d_n)$. Define a group action by $$ g \cdot \{ x_n \} := \{ g x_n \}.$$
 It is straightforward that the group action is well-defined and isometric.

\subsection{Acylindrically hyperbolic groups}
 In this subsection, we discuss the acylindrically hyperbolic group. Denis Osin first proposed the notion of an acylindrically hyperbolic group \cite{MR3430352}. This can be seen as a generalization of the relatively hyperbolic group. However, it still includes other significant groups in geometric group theory, e.g., the mapping class groups $\Mod(S)$ where $S$ is not an exceptional case and has no boundary cases, and the outer automorphism groups of the rank $n$ free group $\Out(F_n)$. First of all, we introduce the definition of an acylindrically hyperbolic group.
 
\begin{defn}
 Let $G$ be a group and $X$ be a metric space. An isometric action of $G$ on $X$ is called \emph{acylindrical} if for every $\epsilon > 0$, there exist $R,N > 0$ such that for any two points $x,y \in X$ such that $d(x,y) \geq R$, then $$ \left | \{ g \in G : d(x,gx) \leq \epsilon , d(y,gy) \leq \epsilon \} \right | \leq N.$$ 
\end{defn}
 
 Recall that an isometric action of $G$ on a $\delta$-hyperbolic space $X$ is called \emph{elementary} if the limit set of $G$ on the Gromov boundary $\partial X$ contains at most $2$ points.
 
\begin{defn}
 We say that a group $G$ is \emph{acylindrically hyperbolic} if it admits a non-elementary and acylindrical action on some $\delta$-hyperbolic space $X$. 
\end{defn}

 Indeed, there are equivalent definitions for acylindrical hyperbolicity. The following theorem is well-known and can be found in \cite{MR3430352}. 

\begin{thm}[\cite{MR3430352}, Theorem 1.2] \label{Equi_Defs_AH}
 For any group $G$, the following conditions are equivalent.
    \begin{enumerate}
        \item $G$ admits a non-elementary acylindrical action on a $\delta$-hyperbolic space $X_1$, i.e., $G$ is acylindrical hyperbolic.
        \item There exists a generating set $X$ of $G$ such that the corresponding Cayley graph $\Gamma(G,X)$ is $\delta$-hyperbolic, $|\partial \Gamma(G,X)|>2$, and the natural action of $G$ on $\Gamma(G,X)$ is acylindrical.
        \item $G$ contains a proper infinite hyperbolically embedded subgroup.
        \item $G$ is not virtually cyclic and admits an action on a $\delta$-hyperbolic space $X_2$ such that at least one element of $G$ is loxodromic and satisfies the WPD condition.
    \end{enumerate}
\end{thm}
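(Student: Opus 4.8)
The plan is to prove the four conditions equivalent by establishing the cycle $(2)\Rightarrow(1)\Rightarrow(4)\Rightarrow(3)\Rightarrow(2)$, leaning on the Dahmani--Guirardel--Osin theory of hyperbolically embedded subgroups \cite{MR3589159} as a black box. The implication $(2)\Rightarrow(1)$ is immediate: a Cayley graph $\Gamma(G,X)$, with edges realized as unit intervals, is a geodesic metric space; it is $\delta$-hyperbolic and has $|\partial\Gamma(G,X)|>2$ by hypothesis, and the left-multiplication action is isometric and acylindrical by hypothesis. Thus $G$ acts non-elementarily and acylindrically on $X_1=\Gamma(G,X)$, which is condition $(1)$.

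For $(1)\Rightarrow(4)$, I would first record the trichotomy for acylindrical actions (established in \cite{MR3430352}): an acylindrical action of $G$ on a $\delta$-hyperbolic space is either elliptic (bounded orbits, empty limit set), lineal (two fixed points at infinity), or of general type (no finite invariant set at infinity, containing independent loxodromic elements). Since the action in $(1)$ is non-elementary, its limit set has more than two points, which rules out the elliptic and lineal cases and forces general type; in particular $G$ contains a loxodromic element $g$ and is not virtually cyclic, as a virtually cyclic group has a limit set of at most two points in any action on a hyperbolic space. Finally, acylindricity immediately upgrades $g$ to a WPD element: for fixed $\epsilon$ and basepoint $x$, choosing $M$ large enough that $d(x,g^{M}x)\geq R$ (possible since $g$ is loxodromic), the acylindricity bound $N$ for the pair $(x,g^{M}x)$ witnesses exactly the weak proper discontinuity condition. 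This gives $(4)$.

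The implication $(4)\Rightarrow(3)$ is the first substantive input from \cite{MR3589159}: given a loxodromic WPD element $g$, its elementary closure $E(g)=\{h\in G : h\{g^{+},g^{-}\}=\{g^{+},g^{-}\}\}$ is an infinite virtually cyclic subgroup that is hyperbolically embedded in $G$. It is infinite because it contains $\langle g\rangle$, and it is proper precisely because $G$ is not virtually cyclic; hence $(3)$ holds.

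The hard direction, and the main obstacle, is $(3)\Rightarrow(2)$: manufacturing an honest acylindrical action on a Cayley graph out of a mere hyperbolically embedded subgroup $H\hookrightarrow_h(G,Y)$. The subtlety is that the relative Cayley graph $\Gamma(G,Y\sqcup H)$, while $\delta$-hyperbolic, is typically \emph{not} acylindrical when $H$ is large, as already happens for peripheral subgroups of relatively hyperbolic groups. My plan is therefore to first use $H$ to extract a loxodromic WPD element---so that $G$ is not virtually cyclic and we may replace $H$ by the virtually cyclic closure $E(g)$---and then take the generating set $X=E(g)\cup Y_0$ for a suitable relative generating set $Y_0$. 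I would show $\Gamma(G,X)$ is $\delta$-hyperbolic with $|\partial\Gamma(G,X)|>2$ from the defining properties of the hyperbolically embedded $E(g)$ together with the presence of independent loxodromics, and then verify acylindricity. This last verification is the crux: it rests on the fine geometry of hyperbolically embedded subgroups (the bounded-coset-penetration--type isolation of the cosets $gE(g)$), and crucially on $E(g)$ being \emph{virtually cyclic}, which is exactly what forces the stabilizers of pairs of far-apart points to be uniformly finite. Controlling this quantitative estimate---rather than any single conceptual step---is where I expect the real work to lie.
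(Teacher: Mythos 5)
This statement is not proved in the paper at all: it is quoted verbatim as Theorem 1.2 of Osin \cite{MR3430352}, so the only meaningful comparison is with Osin's original argument. Your cycle $(2)\Rightarrow(1)\Rightarrow(4)\Rightarrow(3)\Rightarrow(2)$ is the same one Osin uses, and your treatment of the first three implications is correct: $(2)\Rightarrow(1)$ is immediate, $(1)\Rightarrow(4)$ follows from Osin's trichotomy for acylindrical actions together with the observation that acylindricity upgrades a loxodromic element to a WPD element, and $(4)\Rightarrow(3)$ is exactly the Dahmani--Guirardel--Osin theorem that the elementary closure $E(g)$ of a loxodromic WPD element is virtually cyclic and hyperbolically embedded, proper because $G$ is not virtually cyclic.

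The genuine gap is in $(3)\Rightarrow(2)$, which is precisely the new technical content of the cited theorem (Osin's Theorem 5.4 in \cite{MR3430352}) and is \emph{not} contained in the DGO black box. Osin proves: if $H\hookrightarrow_h(G,Y)$, then one can enlarge $Y$ to $Y'\supseteq Y$ so that $H\hookrightarrow_h(G,Y')$ and the action of $G$ on $\Gamma(G,Y'\sqcup H)$ is acylindrical; non-elementarity of this action then needs its own argument producing loxodromic elements, which you also only gesture at. Your plan instead replaces $H$ by $E(g)$ and asserts that acylindricity will follow ``crucially'' from $E(g)$ being virtually cyclic, ``which is exactly what forces the stabilizers of pairs of far-apart points to be uniformly finite.'' This misidentifies the mechanism. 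The uniform finiteness of joint coarse stabilizers in Osin's proof has nothing to do with the hyperbolically embedded subgroup being virtually cyclic: it comes from the local finiteness (properness) of the relative metric $\widehat{d}$ on $H$, which is part of the definition of a hyperbolically embedded subgroup, combined with the enlargement of the generating set. Indeed Theorem 5.4 applies to arbitrary hyperbolically embedded subgroups --- for instance $\ZZ^2$ peripherals of relatively hyperbolic groups --- whose relative Cayley graph actions become acylindrical even though the subgroup is far from virtually cyclic. So the reduction to $E(g)$ buys you nothing, and the actual quantitative work (choosing $Y'$ and bounding the stabilizers) is exactly what remains undone; your own closing sentence concedes this. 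A secondary point: extracting a loxodromic WPD element from condition $(3)$, which you need even to define $E(g)$, is itself a nontrivial step (essentially $(3)\Rightarrow(4)$, via the action on the relative Cayley graph); it is available in the literature, but it should be cited rather than treated as obvious. As written, the proposal establishes the three easy implications and replaces the one hard implication by a plan whose guiding heuristic is incorrect.
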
 
 
\begin{conv}
 Throughout this article, for a group property (P), we say that $G$ is \emph{virtually (P)} if $G$ is infinite and $G$ has a finite index subgroup satisfying property (P).
 So, $G$ is \emph{virtually cyclic} if $G$ contains a finite index subgroup isomorphic to $\ZZ$. 
 Notice that any virtually (P) group cannot be finite in this paper.
\end{conv}

 We want to point out that $\delta$-hyperbolic spaces $X_1$ and $X_2$ in the aforementioned theorem may be different.
The terminology WPD stands for ``Weakly Properly Discontinuous" and it was first introduced by Bestvina and Fujiwara \cite{MR1914565}.
We briefly recall its definition.

\begin{defn}
 Let $G$ be a group acting on a metric space $(X,d)$. We say that an element $g \in G$ is \emph{WPD} if it satisfies all of the following conditions.
\begin{itemize}
	\item The order of $g$ is infinite.
	\item For some $x \in X$, the map $\ZZ \to X$ given by $n \mapsto g^n \cdot x$ is a quasi-isometrically embedding.
	\item For each $\epsilon >0$ and each $x \in X$, there exists an integer $m>0$ such that
$$ \left| \left \{ a \in G : d(x,a \cdot x) < \epsilon , d(g^m \cdot x , a g^m \cdot x) < \epsilon  \right \} \right| < \infty. $$ 
\end{itemize}
\end{defn}
 By the definition, if $g$ is WPD for some action $G$ on a $\delta$-hyperbolic space $X$, then $g$ should be a loxodromic element.
Also, if $G$ acts acylindrically on $X$, then every loxodromic element is WPD.
Recall that an acylindrically hyperbolic group contains infinitely many independent loxodromic elements \cite{MR3430352}.
For more details about the equivalence definitions, Theorem \ref{Equi_Defs_AH}, we refer to \cite{MR3589159}.
For readers who are interested in WPD elements, we refer to \cite{MR1914565} and \cite{MR4753310}.

 We say that a subgroup $H$ of $G$ is \emph{s-normal} if the intersection $g^{-1}Hg \cap H$ is infinite for every $g \in G$.
 It is proved that the class of acylindrically hyperbolic groups is closed under taking s-normal subgroups \cite[Lemma 7.2]{MR3430352}. 
 By the definition of s-normal subgroups, every infinite normal subgroup is s-normal so we obtain the following lemma.

\begin{lem} \label{infinite_normalsubgroup_acyl}
 Let $G$ be an acylindrically hyperbolic group and $H$ be an infinite normal subgroup of $G$. Then $H$ is also acylindrically hyperbolic.
\end{lem}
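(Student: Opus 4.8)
The plan is to reduce the statement to the closure property that has just been cited, namely \cite[Lemma 7.2]{MR3430352}, which says that the class of acylindrically hyperbolic groups is closed under passing to s-normal subgroups. Once that result is available, the only thing left to check is the purely formal observation that an infinite normal subgroup is automatically s-normal.

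First I would recall the definition: $H$ is s-normal in $G$ precisely when $g^{-1}Hg \cap H$ is infinite for every $g \in G$. Since $H$ is normal, we have $g^{-1}Hg = H$ for all $g \in G$, and hence $g^{-1}Hg \cap H = H$. As $H$ is assumed infinite, this intersection is infinite for every $g$, so $H$ is s-normal. Having established that $H$ is an s-normal subgroup of the acylindrically hyperbolic group $G$, I would then invoke \cite[Lemma 7.2]{MR3430352} directly to conclude that $H$ is itself acylindrically hyperbolic.

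There is no genuine obstacle at the level of this lemma: the entire substance of the argument lives inside the cited closure theorem, whose proof is what actually produces, from a non-elementary acylindrical action of $G$, a suitable action witnessing acylindrical hyperbolicity of the s-normal subgroup. The present lemma merely records the most frequently used special case, and the reduction sketched above is a one-line consequence of the fact that normality gives $g^{-1}Hg = H$ rather than merely an infinite intersection.
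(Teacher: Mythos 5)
Your proof is correct and follows exactly the paper's own route: the paper likewise observes that an infinite normal subgroup satisfies $g^{-1}Hg \cap H = H$, hence is s-normal, and then invokes \cite[Lemma 7.2]{MR3430352}. Nothing is missing.
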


We want to point out that the space does not change in Lemma \ref{infinite_normalsubgroup_acyl}. In other words, 
if $G \curvearrowright X$ is non-elementary and acylindrical, then for any infinite normal subgroup $H < G$, the induced action $H \curvearrowright X$ is also non-elementary and acylindrical.


\section{On the Subgroups that Appeared in Condition \eqref{Main_Equalities}} \label{sec:4subgroups}
 In this section, we focus on the subgroups that appeared in the equalities in condition \eqref{Main_Equalities}. 
 We will explain these subgroups more precisely and discuss some properties and remarks. Generally, the equalities among these subgroups fail but we will  prove $$ K(G)=\FC(G)=\mathcal{A}(G) $$ when $G$ is acylindrically hyperbolic from already-known results.
\subsection{The unique maximal finite normal subgroup $K(G)$ and the amenable radical $\mathcal{A}(G)$} \label{subsec_KA(G)}
 First, we will discuss the unique maximal finite normal subgroup $K(G)$. We explain the exact meaning of $K(G)$.

 \begin{defn}
 Let $G$ be a group. \emph{The unique maximal finite normal subgroup} is the finite normal subgroup of $G$, containing all finite normal subgroups of $G$.
 \end{defn}
 Note that if a maximal finite normal subgroup exists, then it should be unique. Hence, ``the" unique maximal finite normal subgroup in the definition makes sense.

 Perhaps one of the natural questions is the existence of $K(G)$. Of course, $K(G)$ may not exist even if $G$ is finitely generated. 
 For a concrete example, notice that there is a finitely generated group $G$ whose center $Z(G)$ is isomorphic to the quotient group $\QQ/\ZZ$ due to Abderezak Ould Houcine \cite{MR2278040}. 
 Recall that the center $Z(G)$ is a characteristic subgroup of $G$, and the subgroups $\left <  \frac{1}{p} \right > < \QQ / \ZZ$ are also characteristic subgroups (it follows from the fact that $\left <  \frac{1}{p} \right >$ is the unique subgroup with $p$ elements). Thus we conclude that the subgroups $\left <  \frac{1}{p} \right > $ are finite normal subgroups of $G$. Since $G$ has subgroups $\left <  \frac{1}{p} \right > $ for every prime $p$, this implies that $G$ does not have the unique maximal finite normal subgroup.

 However, it is proved that any acylindrically hyperbolic group contains $K(G)$. We refer to \cite[Theorem 2.24]{MR3589159}.

 The \emph{amenable radical} $\mathcal{A}(G)$ is the unique maximal amenable normal subgroup. For more details about the amenability, we refer to  \cite[Section 18.3]{MR3753580} and \cite[Chapter 9]{MR3729310}.
In contrast to $K(G)$, it is known that every group has the amenable radical.
When $G$ is acylindrically hyperbolic, its amenable radical is finite \cite[Corollary 7.3]{MR3430352} so it directly implies $K(G)=\mathcal{A}(G)$.

 \subsection{Description of the kernel of actions on asymptotic cones}
 Next, we will describe the kernel of group actions on asymptotic cones. Let $G$ be a group and we fix an ultrafilter $\omega$ and sequence $d_n$. The value $d_{\infty}(\{ x_n \} , \{ g x_n \})$ is a metric between $\{ x_n \}$ and $\{ g x_n \}$ in the asymptotic cone. Notice that this value is exactly
 $$ \lim_{\omega} \frac{|| x_n^{-1} g x_n ||_S}{d_n}. $$
 Here, $S$ is a finite generating set for $G$. Since this value measures how far an element $g$ moves $\{ x_n \}$, the kernel can be expressed as follows.
 
 $$ \ker(G \curvearrowright \Cone_{\omega}(G,d_n)) = \left \{ g \in G : \lim_{\omega} \frac{|| x_n^{-1} g x_n ||_S}{d_n}=0 \textnormal{ for all } x_n \textnormal{ with } \lim_{\omega}\frac{x_n}{d_n} < \infty  \right \}. $$
 
\subsection{Finite conjugacy classes subgroup}
 Now, we give the definition of $\FC(G)$. Recall that FC stands for 'Finite Conjugacy classes'.
 
 \begin{defn} \label{def_FC}
 Let $G$ be a group. We define the \emph{finite conjugacy classes subgroup} $\FC(G)$, also called the \emph{$\FC$-center}, by the set of all elements having finitely many conjugacy classes, that is, $$\FC(G) := \{ g \in G : |\{ x^{-1} g x : x \in G \}| < \infty \}.$$
\end{defn}

 Then it is straightforward that $\FC(G)$ is a normal subgroup of $G$.

\subsection{Easily obtained inclusions and equalities}
 Now we will prove easily obtained equalities and some inclusions.
 
\begin{lem} \label{EasyInclusion}
 If a finitely generated group $G$ has the unique maximal finite normal subgroup $K(G)$, then
 $$ K(G) \subset \FC(G) \subset \ker(G \curvearrowright \Cone_{\omega}(G,d_n))$$
 for all ultrafilter $\omega$ and sequence $d_n$.
\end{lem}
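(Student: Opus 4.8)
The plan is to prove the two inclusions separately, each by unwinding the relevant definition. For the first inclusion $K(G) \subset \FC(G)$, I would take an arbitrary $g \in K(G)$ and observe that, because $K(G)$ is \emph{normal}, its conjugates satisfy $x^{-1} g x \in K(G)$ for every $x \in G$. Hence the entire conjugacy class of $g$ sits inside $K(G)$, giving $|\{ x^{-1} g x : x \in G \}| \leq |K(G)| < \infty$ since $K(G)$ is finite. By Definition \ref{def_FC} this is exactly the statement that $g \in \FC(G)$, so the first inclusion is immediate once normality and finiteness of $K(G)$ are invoked.

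For the second inclusion $\FC(G) \subset \ker(G \curvearrowright \Cone_{\omega}(G,d_n))$, I would use the explicit description of the kernel recorded just above the lemma, namely that $g$ lies in the kernel if and only if $\lim_{\omega} \frac{\| x_n^{-1} g x_n \|_S}{d_n} = 0$ for every admissible sequence $\{x_n\}$. So fix $g \in \FC(G)$ and an arbitrary admissible $\{x_n\}$. The key observation is that finiteness of the conjugacy class $\{ x^{-1} g x : x \in G \}$ converts into a \emph{uniform} word-length bound: setting $M := \max\{ \| h \|_S : h \in \{ x^{-1} g x : x \in G \} \}$, the maximum is over a finite set and hence $M < \infty$. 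Since each $x_n^{-1} g x_n$ is one of these finitely many conjugates, we get $\| x_n^{-1} g x_n \|_S \leq M$ for all $n \in \NN$, independently of the chosen sequence.

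I would then close the argument by comparing against the scaling factor. For every $n$ we have the chain
$$ 0 \;\leq\; \frac{\| x_n^{-1} g x_n \|_S}{d_n} \;\leq\; \frac{M}{d_n}, $$
and since $\{d_n\}$ is unbounded and non-decreasing, $M/d_n \to 0$ in the ordinary sense, so its ultralimit is $0$ as well. Using monotonicity of the ultralimit across this inequality forces $\lim_{\omega} \frac{\| x_n^{-1} g x_n \|_S}{d_n} = 0$. As $\{x_n\}$ was arbitrary, $g$ acts trivially on $\Cone_{\omega}(G,d_n)$, which completes the second inclusion, and the bound is visibly uniform in $\omega$ and $d_n$.

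In truth I do not expect a serious obstacle here: both inclusions are formal consequences of the definitions, which is presumably why the lemma is titled ``easily obtained.'' The only point requiring a little care is the transition from ``finite conjugacy class'' to ``uniformly bounded displacement'' — one must notice that finiteness of the class, not of the group, is what supplies the constant $M$ — together with the standard fact that the ultralimit agrees with, and respects the monotonicity of, ordinary convergent sequences in $\RR$.
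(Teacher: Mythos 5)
Your proof is correct and follows essentially the same route as the paper's: the first inclusion from normality and finiteness of $K(G)$, and the second by converting finiteness of the conjugacy class into a uniform bound on $\| x_n^{-1} g x_n \|_S$ so that dividing by the unbounded sequence $d_n$ forces the ultralimit to vanish. The paper merely states these steps more tersely (declaring the first inclusion ``direct'' and leaving the ultralimit comparison implicit), so your write-up is just a fuller version of the same argument.
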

\begin{proof}
 The first inclusion follows directly. Now choose $a \in \FC(G)$. Then the set $\left \{ ||g^{-1}ag||_S : g \in G \right \} \subset \RR$ is bounded. Thus, for any unbounded sequence $d_n$ of $\RR$ and any sequence $x_n$ in $G$, we have $$ \lim_{n \to \infty} \frac{x_n^{-1} a x_n}{d_n}=0 $$ so it means that $a$ is an element of the kernel $\ker(G \curvearrowright \Cone_{\omega}(G,d_n))$.
\end{proof}

 However, these subgroups in Lemma \ref{EasyInclusion} may not coincide in general.
 For the first case, just consider $G = \ZZ$. 
 In order to find a counterexample for the second case, consider the three-dimensional Heisenberg group $H$.
 Let $$x= 
\begin{bmatrix}
1 & 1 & 0 \\ 0 & 1 & 0 \\ 0 & 0 & 1
\end{bmatrix}, y=
\begin{bmatrix}
1 & 0 & 0 \\ 0 & 1 & 1 \\ 0 & 0 & 1
\end{bmatrix}$$ be two generators of $H$, and let $z=[x,y]:=x^{-1}y^{-1}xy$. It is known that the commutator subgroup of $H$ is an infinite cyclic group generated by $z$, and $z$ is a distortion element. 
Here, we say that $g \in G$ is a \emph{distortion element} if $g$ has an infinite order and $$\lim_{k \to \infty} \frac{||g^k||}{k}=0$$ for some word metric $|| \cdot ||$ with respect to some finite generating set. The distortedness can be obtained from $[x^l,y^m]=z^{lm}.$

 Now we will give an example of $\FC(G) \subsetneq \ker(G \to \Isom(\Cone_\omega(G,d_n)))$. Put $G=H$, the Heisenberg group, and pick $g = x$, one of its generators. 
 Then $g \not \in \FC(G)$ since $$y^{-m} g y^m = 
\begin{bmatrix}
1 & 1 & m \\ 0 & 1 & 0 \\ 0 & 0 & 1
\end{bmatrix}.$$
Our goal is to show that $g$ is in $\ker(G \to \Isom(\Cone_\omega(G,d_n))).$ First note that any element in $H$ can be expressed as $$
\begin{bmatrix}
1 & a_k & c_k \\ 0 & 1 & b_k \\ 0 & 0 & 1
\end{bmatrix}=y^{b_k}z^{c_k}x^{a_k}.$$
Then we have $$[g,y^{b_k}z^{c_k}x^{a_k}]=\begin{bmatrix}
1 & 0 & b_k \\ 0 & 1 & 0 \\ 0 & 0 & 1
\end{bmatrix}.$$

Now assume $g \not \in \ker(G \to \Isom(\Cone_\omega(G,d_n)))$. Then there exists a sequence $\{ x_n \}$ in $H$ such that $\lim_{\omega} \frac{||x_n||_S}{d_n} = L > 0 $ and $\{ x_n \} \neq g \cdot \{ x_n \}$, equivalently, $$\lim_{\omega} \frac{|| x_n ^{-1} g x_n ||_S}{d_n} = d > 0.$$ 
Since $g$ is fixed and $\lim_{\omega} \frac{d_n}{||x_n||_S}=1/L > 0$, we have $$\lim_{\omega} \frac{|| g^{-1} x_n ^{-1} g x_n ||_S}{||x_n||} = \lim_{\omega} \frac{|| g^{-1} x_n ^{-1} g x_n ||_S}{d_n} \frac{d_n}{||x_n||_S} = \frac{d}{L} > 0$$
so it means that $$ \lim_{\omega} \frac{||[g,x_n]||_S}{||x_n||_S} > 0.$$ Let $x_n = y^{b_n}z^{c_n}x^{a_n}$ so $$[g,x_n]=\begin{bmatrix}
1 & 0 & b_n \\ 0 & 1 & 0 \\ 0 & 0 & 1
\end{bmatrix}=z^{b_n}.$$
Since $z$ is distortion, $\lim_{n \to \infty} \frac{||z^{b_n}||_S}{b_n}=0$ and this implies $\lim_{\omega} \frac{||z^{b_n}||_S}{b_n}=0$.
However, $||x_n||_S \geq b_n$. It is a contradiction. Therefore, $g \in \ker(G \to \Isom(\Cone_\omega(G,d_n))) - \FC(G)$ and it means $$\FC(G) \subsetneq \ker(G \curvearrowright \Cone_\omega(G,d_n)).$$

Also, the following inclusion is known to be satisfied for a countable group. Note that we concentrate only on finitely generated groups so this inclusion holds in our setting. 
\begin{lem} \label{F<A}
 Let $G$ be a countable group. Then we have $$ \FC(G) \subset \mathcal{A}(G).$$
\end{lem}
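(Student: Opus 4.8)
The plan is to reduce the statement to the single claim that $\FC(G)$ is an amenable group. Once that is established, the inclusion is immediate: $\FC(G)$ is a normal subgroup of $G$ (as already noted after Definition \ref{def_FC}), and since $\mathcal{A}(G)$ is by definition the \emph{unique maximal} amenable normal subgroup, it absorbs every amenable normal subgroup. Hence $\FC(G) \subset \mathcal{A}(G)$. So the entire content of the lemma lies in showing amenability of $\FC(G)$.

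The first step is to observe that $\FC(G)$ is an \emph{FC-group} in its own right, and in fact that every subgroup of it is as well. If $g \in \FC(G)$, its conjugacy class in $G$ is finite by definition, and for any subgroup $H \subset \FC(G)$ the conjugacy class $\{ h^{-1} g h : h \in H \}$ is a subset of the conjugacy class in $G$, hence also finite. Thus every subgroup of $\FC(G)$ has all conjugacy classes finite. This inheritance under passing to subgroups is the feature that makes the finitely generated pieces tractable.

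The key structural step is then: a finitely generated FC-group $H = \langle h_1, \dots, h_k \rangle$ is center-by-finite, and therefore amenable. Indeed, for each generator the centralizer $C_H(h_i)$ is the stabilizer of $h_i$ under the conjugation action, so its index equals the size of the conjugacy class of $h_i$, which is finite. An element of $H$ lies in the center exactly when it commutes with every $h_i$, so $Z(H) = \bigcap_{i=1}^{k} C_H(h_i)$ is a finite intersection of finite-index subgroups and hence has finite index in $H$. Consequently $H$ is an extension of the abelian (hence amenable) group $Z(H)$ by the finite group $H/Z(H)$, and amenability of $H$ follows since the class of amenable groups is closed under extensions.

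Finally I would assemble these: $\FC(G)$ is the directed union of its finitely generated subgroups, each of which is an FC-group by the inheritance observation and hence amenable by the center-by-finite argument; since amenability is preserved under directed unions, $\FC(G)$ is amenable. (Here countability of $G$ lets one realize $\FC(G)$ as an ascending union of finitely generated subgroups, though a directed union already suffices.) The main obstacle, and the only genuinely nontrivial input, is the center-by-finite claim for finitely generated FC-groups; the remaining steps are bookkeeping about the permanence properties of amenability and the maximality of the amenable radical. Care must be taken to use that the FC-property is inherited by subgroups, since it is this that licenses the orbit-stabilizer argument on each finitely generated piece.
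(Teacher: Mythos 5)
Your proof is correct, but it takes a genuinely different route from the paper. The paper does not prove amenability of $\FC(G)$ directly: it interposes the $\AC$-center, $\AC(G) = \{ g \in G : G/C_G(\langle\langle g \rangle\rangle) \text{ is amenable} \}$, observes that $\FC(G) \subset \AC(G)$ (for $g$ with finite conjugacy class, $C_G(\langle\langle g \rangle\rangle)$ is a finite intersection of finite-index centralizers, so the quotient is finite, hence amenable), and then cites an external result (Lemma 3.1 of the reference [MR4403143]) asserting that $\AC(G)$ is an amenable normal subgroup of $G$, whence $\AC(G) \subset \mathcal{A}(G)$ by maximality. You instead prove amenability of $\FC(G)$ itself from scratch: the FC-property passes to subgroups, B.~H.~Neumann's classical orbit--stabilizer argument shows every finitely generated FC-group is center-by-finite and hence amenable, and amenability survives directed unions, so $\FC(G)$ is an amenable normal subgroup and lands inside $\mathcal{A}(G)$. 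Each approach has its merits: yours is self-contained, uses only standard permanence properties of amenability, and as you note makes the countability hypothesis superfluous (the chain of conjugacy classes and directed unions needs no enumeration of $G$); the paper's is shorter modulo the citation and places $\FC(G)$ inside the strictly larger radical $\AC(G)$, giving the finer chain $\FC(G) \subset \AC(G) \subset \mathcal{A}(G)$, which is extra information your argument does not produce.
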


This inclusion follows immediately from $\AC$-center. This notion was first suggested in \cite{MR4142752} but we use the definition introduced in \cite{MR4403143}.

\begin{defn}
 Let $G$ be a countable group. $\AC$\emph{-center} of $G$, denoted by $\AC (G)$, is defined by $$ \AC (G):= \{ g \in G : \textnormal{the quotient group } G/C_G\left(\left< \left< g \right> \right>\right) \textnormal{ is amenable} \}. $$
  Here, $\left< \left< S \right> \right>$ is the normal closure of $S$ meaning the minimal normal subgroup containing $S$.
\end{defn}

\begin{proof}[Proof of Lemma \ref{F<A}]
 By definition, $\AC$-center contains $\FC(G)$ since any finite group is amenable.
 Due to Lemma 3.1 in \cite{MR4403143}, $\AC (G)$ is an amenable normal subgroup of $G$ so by the definition, $\AC (G)$ is contained in the amenable radical $\mathcal{A}(G)$.
\end{proof}

 Similarly to the previous inclusions, two subgroups $\FC(G)$ and $\mathcal{A}(G)$ may not be the same generally.
 The Heisenberg group $H$ also provides a counterexample. Recall that $g := x \not \in \FC (H)$ but $\mathcal{A}(H)=H$ since $H$ is nilpotent so $\FC(H) \subsetneq \mathcal{A}(H)$.

 The only remaining inclusion relation is $$ \ker(G \curvearrowright \Cone_\omega(G,d_n)) \textnormal{ and } \mathcal{A}(G).$$
In Section \ref{sec:action}, we give an example for $\ker(G \curvearrowright \Cone_\omega(G,d_n)) \subsetneq \mathcal{A}(G)$ (see Lemma \ref{couterexample1_1}). However, we cannot find a group satisfying the reverse inclusion.

\begin{ques}
 For a finitely generated group $G$, does always the inclusion $$ \ker(G \curvearrowright \Cone_\omega(G,d_n)) \subseteq \mathcal{A}(G) $$ hold?
 Or there exists a finitely generated group $G$ such that $$ \ker(G \curvearrowright \Cone_\omega(G,d_n)) \supsetneq \mathcal{A}(G)?$$
\end{ques}

 Recall that the equality among $K(G), \FC(G)$, and $\mathcal{A}(G)$ does not hold in general but they are the same assuming $G$ is acylindrically hyperbolic.

\begin{lem} \label{Lem_KnownResults}
 Let $G$ be an acylindrically hyperbolic group and $X$ be a $\delta$-hyperbolic space on which $G$ acts non-elementarily and acylindrically. 
Then $$\ker(G \curvearrowright \partial X) = K(G) = \FC(G) = \mathcal{A}(G).$$
\end{lem}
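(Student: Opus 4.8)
The plan is to split the four-fold equality into a purely algebraic part, $K(G)=\FC(G)=\mathcal{A}(G)$, which is only a matter of assembling the inclusions already gathered in Section \ref{sec:4subgroups}, and a geometric part, $\ker(G\curvearrowright\partial X)=K(G)$, which is where acylindricity does the work.

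For the algebraic part I would simply close the loop
$$ K(G)\subseteq\FC(G)\subseteq\mathcal{A}(G)=K(G). $$
The first inclusion is the first inclusion of Lemma \ref{EasyInclusion}; the second is Lemma \ref{F<A}, valid since a finitely generated group is countable; and the final equality was already recorded in Section \ref{subsec_KA(G)}, because by \cite[Corollary 7.3]{MR3430352} the amenable radical $\mathcal{A}(G)$ is finite, hence a finite normal subgroup and so contained in $K(G)$, while $K(G)$ is finite normal and therefore amenable normal, giving $K(G)\subseteq\mathcal{A}(G)$. Thus the three subgroups coincide, with nothing further to prove.

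The content lies in $\ker(G\curvearrowright\partial X)=K(G)$, which I would establish by two inclusions. Set $N:=\ker(G\curvearrowright\partial X)$. First, $N$ is normal: for $g\in N$, $h\in G$ and any $\xi$ one has $hgh^{-1}\cdot\xi=hg\cdot(h^{-1}\xi)=h\cdot(h^{-1}\xi)=\xi$. Next I claim $N$ is finite. Since the action is non-elementary, $|\partial X|>2$, so every $g\in N$ fixes at least three boundary points; such an isometry can be neither parabolic (one fixed point) nor loxodromic (two fixed points with north--south dynamics), hence every element of $N$ is elliptic. If $N$ were infinite, then by Lemma \ref{infinite_normalsubgroup_acyl} and the remark following it the restricted action $N\curvearrowright X$ would again be non-elementary and acylindrical on the \emph{same} space $X$, so $N$ would contain a loxodromic element, contradicting that all of its elements are elliptic. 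Therefore $N$ is finite, and being normal it satisfies $N\subseteq K(G)$.

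For the reverse inclusion $K(G)\subseteq N$ I would show that the finite normal subgroup acts trivially on the limit set. Fix $k\in K(G)$ and an arbitrary loxodromic $\ell\in G$. Normality gives $\ell^{n}k\ell^{-n}\in K(G)$ for all $n$, and since $K(G)$ is finite two of these agree, so $\ell^{p}$ commutes with $k$ for some $p>0$. Hence $k$ preserves $\Fix(\ell^{p})=\{\ell^{+},\ell^{-}\}$; comparing the attracting endpoints of $\ell^{p}$ and $k\ell^{p}k^{-1}=\ell^{p}$ shows $k$ cannot interchange them, so $k$ fixes $\ell^{+}$ and $\ell^{-}$ individually. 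As the endpoints of loxodromic elements are dense in the limit set and $k$ acts continuously, $k$ fixes the limit set pointwise, i.e.\ $k\in N$. The two inclusions give $\ker(G\curvearrowright\partial X)=K(G)$, which together with the algebraic part completes the chain of equalities. I expect the finiteness of $N$ to be the crux: it is precisely there that acylindricity is indispensable, entering cleanly through Lemma \ref{infinite_normalsubgroup_acyl} to upgrade ``every element is elliptic'' into ``the subgroup is finite''.
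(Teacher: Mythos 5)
Your split into an algebraic part and a geometric part mirrors the paper's proof, and your first geometric inclusion $\ker(G\curvearrowright\partial X)\subseteq K(G)$ is exactly the paper's argument: an infinite kernel would, by Lemma \ref{infinite_normalsubgroup_acyl} and the remark after it, act non-elementarily and acylindrically on the same $X$, hence contain a loxodromic element, which cannot fix three boundary points. Your algebraic part, however, is a genuinely different route. The paper proves $K(G)=\FC(G)$ by passing to $G/K(G)$, quoting that this quotient is again acylindrically hyperbolic with trivial maximal finite normal subgroup \cite[Lemma 3.9]{MR3368093} and that such groups have trivial $\FC$-center \cite[Theorem 2.35]{MR3589159}; you instead close the cycle $K(G)\subseteq\FC(G)\subseteq\mathcal{A}(G)\subseteq K(G)$ using only Lemma \ref{EasyInclusion}, Lemma \ref{F<A}, and finiteness of the amenable radical \cite[Corollary 7.3]{MR3430352}. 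This is shorter and avoids the quotient, but note a small mismatch in generality: Lemma \ref{F<A} is stated for countable groups, while the present lemma assumes neither finite generation nor countability of $G$ (acylindrically hyperbolic groups need not be countable), so your route proves slightly less than the paper's; for the finitely generated groups to which the paper applies the result this is immaterial.

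The one genuine gap is the final step of $K(G)\subseteq\ker(G\curvearrowright\partial X)$. Your pigeonhole argument correctly shows that every $k\in K(G)$ fixes both endpoints of every loxodromic element, and by density these endpoints only fill the limit set $L(G)$; so what you have proved is that $k$ acts trivially on $L(G)$, not on $\partial X$. Since the action is not assumed cobounded, $L(G)$ may be a proper subset of $\partial X$, and the jump from ``$k$ fixes $L(G)$ pointwise'' to ``$k\in N$'' can actually fail: for instance, let $G=F_2\times\ZZ/2\ZZ$ act on the tree $X'$ obtained from the Cayley graph $T$ of $F_2$ by attaching two rays $R_v^{+},R_v^{-}$ at every vertex $v$, where $F_2$ acts by translations and the $\ZZ/2\ZZ$-factor fixes $T$ pointwise and swaps $R_v^{+}\leftrightarrow R_v^{-}$. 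This action is acylindrical and non-elementary with $L(G)=\partial T$, and $K(G)=\ZZ/2\ZZ$ fixes $L(G)$ pointwise, yet it permutes the ends of the attached rays, so it is not in $\ker(G\curvearrowright\partial X')$. To be fair, the paper's own proof has the identical gap (it simply ``recalls'' that a finite normal subgroup acts trivially on $\partial X$, whereas the standard fact only gives triviality on the limit set), and the gap closes precisely when $L(G)=\partial X$, e.g.\ when the action is in addition cobounded --- which is exactly the situation the paper arranges when it invokes this lemma in the proof of Theorem \ref{AHG:4-equality-CP}. So you should either add coboundedness (equivalently $L(G)=\partial X$) as a hypothesis, or replace $\ker(G\curvearrowright\partial X)$ by $\ker(G\curvearrowright L(G))$ in the statement; with either repair your argument is complete.
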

\begin{proof}
 As mentioned before, we already have $K(G) = \mathcal{A}(G)$ since the amenable radical of $G$ is finite \cite[Corollary 7.3]{MR3430352}.
To verify $K(G)=\FC(G)$, consider the quotient $G/K(G)$. By the construction, $K\left(G/K(G)\right)=1$ and it is again acylindrically hyperbolic \cite[Lemma 3.9]{MR3368093}. By Theorem 2.35 in \cite{MR3589159}, it follows $$\FC \left( G/K(G) \right)=1.$$ It means $\FC(G) \subset K(G)$ and the opposite inclusion follows directly. Hence $K(G)=\FC(G)$.

 The only remaining part is $\ker(G \curvearrowright \partial X) = K(G)$. Recall that any finite normal subgroup of $G$ acts trivially on $\partial X$ so $\ker(G \curvearrowright \partial X) \supset K(G)$. The reverse inclusion is obtained from Lemma \ref{infinite_normalsubgroup_acyl}. 
If $N$ is an infinite normal subgroup of $G$, then the induced action $N \curvearrowright X$ is also non-elementarily. So, $N$ cannot be the kernel $\ker(G \curvearrowright \partial X)$ and we have $\ker(G \curvearrowright \partial X) \subset K(G)$.
\end{proof}


\section{The Kernel of Acylindrically Hyperbolic Groups Acting on Asymptotic Cones} \label{sec:AHG}
 In this section, we will characterize the kernel of group actions on its asymptotic cones. We start this section with the following lemma. We use it to construct a quasi-isometric embedding.

\begin{lem}[\cite{MR3514064}, Lemma 3.2]
 Let $X$ be a $\delta$-hyperbolic space and, for $1 \leq i \leq k$, let $g_i \in \Isom (X)$ be isometries of $X$ such that for some $x_0 \in X$ we have $$d(x_0,g_i \cdot x_0) \geq 2 (g_j^{\pm 1} \cdot x_0| g_l^{\pm 1} \cdot x_0)_{x_0}+18 \delta + 1 $$
 for all $1 \leq i,j,l \leq k$ except when $j=l$ and the exponent on the $g_j$ and $g_l$ are the same. Then the orbit map $$ \left< g_1 ,\dots , g_k \right> \to X $$ given by $g \mapsto g \cdot x_0$ is a quasi-isometric embedding.
\end{lem}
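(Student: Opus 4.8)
The plan is to prove the lower bound that complements the obvious upper bound, reading it off a ``broken geodesic'' whose turns are controlled. Setting $L := \max_i d(x_0, g_i\cdot x_0)$, the triangle inequality along the orbit gives $d(x_0, g\cdot x_0) \le L\,|g|$ for every $g$ of word length $|g|$, so only a matching lower bound is at issue. First I would fix a reduced word $w = g_{i_1}^{\epsilon_1}\cdots g_{i_n}^{\epsilon_n}$ and form the points $p_0 = x_0$ and $p_j = g_{i_1}^{\epsilon_1}\cdots g_{i_j}^{\epsilon_j}\cdot x_0$, joined by geodesics; since each $g_i$ is an isometry, $\ell_j := d(p_{j-1},p_j) = d(x_0, g_{i_j}^{\epsilon_j}\cdot x_0)$. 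Applying the isometry $(g_{i_1}^{\epsilon_1}\cdots g_{i_j}^{\epsilon_j})^{-1}$, which sends $p_j \mapsto x_0$, identifies the turning product at $p_j$ as $(p_{j-1}\mid p_{j+1})_{p_j} = (g_{i_j}^{-\epsilon_j}\cdot x_0 \mid g_{i_{j+1}}^{\epsilon_{j+1}}\cdot x_0)_{x_0}$. Because $w$ is reduced, $g_{i_{j+1}}^{\epsilon_{j+1}} \ne g_{i_j}^{-\epsilon_j}$, which is exactly the non-forbidden configuration in the hypothesis, so the displayed inequality applies and yields $\ell_j \ge 2(p_{j-1}\mid p_{j+1})_{p_j} + 18\delta + 1$, and likewise for $\ell_{j+1}$, at every interior vertex.

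The heart of the argument is a local-to-global estimate showing this broken geodesic is a quasigeodesic. Writing $C_j := (p_{j-1}\mid p_{j+1})_{p_j}$, I would prove by induction on $j$ that $(p_0\mid p_{j+1})_{p_j} \le C_j + \delta$, i.e. that from $p_j$ the far basepoint $p_0$ is seen in essentially the same direction as the immediate predecessor $p_{j-1}$. The four-point inequality $(p_{j-1}\mid p_{j+1})_{p_j} \ge \min\{(p_{j-1}\mid p_0)_{p_j},\,(p_0\mid p_{j+1})_{p_j}\} - \delta$ gives $\min\{(p_{j-1}\mid p_0)_{p_j},\,(p_0\mid p_{j+1})_{p_j}\} \le C_j + \delta$; the claim follows once the minimum is realized by the second term. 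To exclude the other case I would use the exact identity $(p_{j-1}\mid p_0)_{p_j} = \ell_j - (p_0\mid p_j)_{p_{j-1}}$ together with the inductive bound $(p_0\mid p_j)_{p_{j-1}} \le C_{j-1} + \delta$, which forces $(p_{j-1}\mid p_0)_{p_j} \ge \ell_j - C_{j-1} - \delta$; the gap $\ell_j \ge 2\max\{C_j, C_{j-1}\} + 18\delta + 1$ then makes $(p_{j-1}\mid p_0)_{p_j} > C_j + \delta$, so it cannot attain the minimum. Feeding $(p_0\mid p_j)_{p_{j-1}} \le C_{j-1} + \delta$ into the telescoping identity $d(p_0,p_n) = \sum_{j=1}^n \ell_j - 2\sum_{j=1}^n (p_0\mid p_j)_{p_{j-1}}$ and bounding $2C_j \le \ell_j - 18\delta - 1$ yields $d(x_0, w\cdot x_0) \ge \ell_n + (n-1)(16\delta + 1) \ge n - 1$.

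The two estimates $|w| - 1 \le d(x_0, w\cdot x_0) \le L\,|w|$ hold for every reduced word $w$. The lower one first shows that no nontrivial reduced word fixes $x_0$, so the $g_i$ freely generate a free subgroup and the word metric of $\langle g_1,\dots,g_k\rangle$ coincides with the free-group word length $|w|$; the same pair of inequalities then says precisely that the orbit map is a quasi-isometric embedding. I expect the main obstacle to be the inductive local-to-global step: one must control how the Gromov product seen from the current vertex, of the accumulated far endpoint $p_0$ against the next vertex $p_{j+1}$, fails to drift above the purely local product $C_j$, and the bookkeeping of the hyperbolicity constants (the role of $18\delta + 1$ in producing the strict gap that kills the bad branch of the four-point inequality) is where all the care is needed. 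Everything else reduces to the triangle inequality and isometry-invariance of the metric.
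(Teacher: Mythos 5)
The first thing to say is that the paper contains no proof of this statement at all: it is quoted verbatim from the cited reference (Lemma 3.2 of \cite{MR3514064}) and used as a black box, with the authors' own work starting only at the modified version they derive from it. So there is no internal argument to compare yours against; what I can do is check your proof on its own merits, and it is correct. Your chain of steps all verify: (i) applying $(g_{i_1}^{\epsilon_1}\cdots g_{i_j}^{\epsilon_j})^{-1}$ identifies the turning product at $p_j$ with $(g_{i_j}^{-\epsilon_j}\cdot x_0 \mid g_{i_{j+1}}^{\epsilon_{j+1}}\cdot x_0)_{x_0}$, and reducedness of $w$ is exactly the condition that this pair is not of the forbidden type, so the hypothesis applies; moreover, since the hypothesis bounds \emph{every} displacement $d(x_0,g_i\cdot x_0)$ by every admissible product, each $\ell_j$ dominates both $2C_j+18\delta+1$ and $2C_{j-1}+18\delta+1$, which is the gap your exclusion step needs; (ii) the induction $(p_0\mid p_{j+1})_{p_j}\le C_j+\delta$ is sound: the base case is trivial because $(p_0\mid p_2)_{p_1}=C_1$, the identity $(p_{j-1}\mid p_0)_{p_j}=\ell_j-(p_0\mid p_j)_{p_{j-1}}$ is exact, and the inductive hypothesis forces $(p_{j-1}\mid p_0)_{p_j}\ge C_j+17\delta+1>C_j+\delta$, killing the bad branch of the four-point inequality; (iii) the telescoping identity $d(p_0,p_n)=\sum_j \ell_j-2\sum_j(p_0\mid p_j)_{p_{j-1}}$ is also exact, and the arithmetic gives $d(x_0,w\cdot x_0)\ge \ell_n+(n-1)(16\delta+1)\ge n-1$; (iv) this lower bound simultaneously yields freeness of $\langle g_1,\dots,g_k\rangle$ on the $g_i$ (so the subgroup word metric is reduced word length) and, with the trivial upper bound, the quasi-isometric embedding. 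This broken-geodesic, local-to-global argument is the standard way such criteria are proved, so it is almost certainly also the shape of the proof in the cited source.

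One caveat on constants: you invoke the four-point inequality with constant exactly $\delta$, whereas this paper (and many others) defines $\delta$-hyperbolicity by the Rips thin-triangle condition; converting thin triangles to the four-point condition costs a bounded factor, say a four-point constant $\delta'\le 8\delta$. Your argument is robust to this substitution precisely because of the slack in $18\delta+1$: the exclusion step needs $18\delta+1>2\delta'$, and the final count still gives $d(x_0,w\cdot x_0)\ge \ell_n+(n-1)\left(18\delta+1-2\delta'\right)\ge n-1$. So this is bookkeeping rather than a gap, but a complete write-up should state which formulation of hyperbolicity the constant $\delta$ refers to and carry $\delta'$ through the induction if the thin-triangle definition is the one in force.
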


 We slightly modify the statement as follows. The following version is more suitable for our purpose.
 
 \begin{lem} \label{QIembeddingL}
 Let $G$ be a group and suppose $G$ acts isometrically on a $\delta$-hyperbolic space $X$. Let $x,y$ be two independent loxodromic elements.
 Then there exists $M \in \NN$ such that the orbit map $\left< x^M,y^M \right > \to X $ defined by $$g \mapsto g \cdot o $$ is a quasi-isometric embedding for any $o \in X$.
\end{lem}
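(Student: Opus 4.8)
The plan is to apply the preceding lemma (Lemma 3.2 of \cite{MR3514064}) with $k=2$, taking $g_1 = x^M$ and $g_2 = y^M$ for a suitably large $M$, and then to upgrade the single-basepoint conclusion to all basepoints. First I would fix an arbitrary basepoint $x_0 \in X$ and examine the hypothesis of that lemma. Since the left-hand side $d(x_0, g_i \cdot x_0)$ depends only on $i$ while the right-hand side depends only on the pair $(j,l)$ and the chosen signs, it suffices to arrange that $\min\{ d(x_0, x^M x_0),\, d(x_0, y^M x_0)\}$ exceeds $2C + 18\delta + 1$, where $C$ is an upper bound for all the Gromov products $(g_j^{\pm 1} x_0 \mid g_l^{\pm 1} x_0)_{x_0}$ that are not excluded by the lemma (i.e.\ excluding the diagonal terms with equal exponents). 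Because $x$ and $y$ are loxodromic, the maps $n \mapsto x^n x_0$ and $n \mapsto y^n x_0$ are quasi-isometric embeddings, so $d(x_0, x^M x_0), d(x_0, y^M x_0) \to \infty$ as $M \to \infty$; thus the left-hand side is unbounded in $M$.

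The heart of the argument, and the main thing to check, is that the relevant Gromov products stay bounded independently of $M$. The allowed terms are of three types: $(x^M x_0 \mid x^{-M} x_0)_{x_0}$, $(y^M x_0 \mid y^{-M} x_0)_{x_0}$, and $(x^{\pm M} x_0 \mid y^{\pm M} x_0)_{x_0}$. In each case the two families of points (indexed by $M$) converge to \emph{distinct} points of $\partial X$: the two endpoints $x^{+\infty} \neq x^{-\infty}$ of the axis of $x$ in the first type, likewise for $y$ in the second, and an endpoint of $x$ against an endpoint of $y$ in the third, these being distinct precisely because $x$ and $y$ are independent. Since in a $\delta$-hyperbolic space the Gromov product of two sequences converging to distinct boundary points has finite $\limsup$ (bounded by the boundary Gromov product plus $2\delta$), each of these finitely many families is bounded above, uniformly in $M$; hence so is their maximum $C$. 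Choosing $M$ large enough that both $d(x_0, x^M x_0)$ and $d(x_0, y^M x_0)$ exceed $2C + 18\delta + 1$, the preceding lemma applies and shows that $g \mapsto g \cdot x_0$ is a quasi-isometric embedding of $\langle x^M, y^M \rangle$ into $X$.

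Finally, I would promote this to all basepoints. For any $o \in X$ and any $g \in \langle x^M, y^M \rangle$ we have $d(g \cdot o, g \cdot x_0) = d(o, x_0)$ because $g$ acts by isometries, so the orbit map based at $o$ differs from the one based at $x_0$ by a uniformly bounded amount. A map at bounded distance from a quasi-isometric embedding is again a quasi-isometric embedding (same multiplicative constant, additive constant enlarged by $2\,d(o, x_0)$), so the single $M$ already chosen works for every $o \in X$, as required. The genuinely load-bearing step is the boundedness claim for the Gromov products, which is exactly where both $\delta$-hyperbolicity and the independence of $x$ and $y$ are used; everything else is bookkeeping around the quoted lemma.
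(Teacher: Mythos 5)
Your proposal is correct and follows essentially the same route as the paper: both apply Lemma 3.2 of \cite{MR3514064} with $g_1 = x^M$, $g_2 = y^M$ and verify its hypotheses for sufficiently large $M$, using the independence of $x$ and $y$ to bound the mixed Gromov products while the orbit distances grow. The only differences are cosmetic --- where you bound the diagonal products $(x^M x_0 \mid x^{-M} x_0)_{x_0}$ by convergence of $x^{\pm M}x_0$ to distinct boundary points, the paper expands them into distance inequalities and invokes stable translation lengths, and your explicit basepoint-change argument for ``any $o \in X$'' makes uniform a point the paper leaves implicit.
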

\begin{proof}
 First, think of the previous lemma with $k=2$. In this case, the required conditions are the following inequalities.
\begin{align*}
d(x_0,g_1 \cdot x_0) \geq 2(g_1^{\pm1} \cdot x_0|g_2^{\pm1} \cdot x_0)_{x_0} + 18 \delta +1 \\
d(x_0,g_1 \cdot x_0) \geq 2(g_1 \cdot x_0|g_1^{-1} \cdot x_0)_{x_0} + 18 \delta +1 \\
d(x_0,g_1 \cdot x_0) \geq 2(g_2 \cdot x_0|g_2^{-1} \cdot x_0)_{x_0} + 18 \delta +1 \\
d(x_0,g_2 \cdot x_0) \geq 2(g_1^{\pm1} \cdot x_0|g_2^{\pm1} \cdot x_0)_{x_0} + 18 \delta +1 \\
d(x_0,g_2 \cdot x_0) \geq 2(g_1 \cdot x_0|g_1^{-1} \cdot x_0)_{x_0} + 18 \delta +1 \\
d(x_0,g_2 \cdot x_0) \geq 2(g_2 \cdot x_0|g_2^{-1} \cdot x_0)_{x_0} + 18 \delta +1
\end{align*}
Here, the inequalities containing $\pm 1$ must hold for any choice of signs $\pm$.
 Put $x = g_1, y = g_2 \in G$ and $p = x_0 \in X$. By the definition of the Gromov product, it suffices to check the following 3 types of inequalities for some $p \in X$ and $M>0$.
\begin{align*} \label{1st}
 \begin{aligned} 
 d(x^{2M} \cdot p , p) & \geq d(x^M \cdot p , p) + 18 \delta +1 \\
 d(y^{2M} \cdot p , p) & \geq d(y^M \cdot p , p) + 18 \delta +1
 \end{aligned} \tag{$\dag$}
 \end{align*}

 \begin{align*} \label{2nd}
 \begin{aligned} 
  d(y^{2M}\cdot p,p) + d(x^M \cdot p ,p) & \geq 2d(y^M \cdot p, p)+18 \delta + 1 , \\
  d(x^{2M}\cdot p,p) + d(y^M \cdot p ,p) & \geq 2d(x^M \cdot p, p)+18 \delta + 1.
 \end{aligned} \tag{$\dag \dag$}
 \end{align*}

 \begin{align*} \label{3rd}
 \begin{aligned}
d( x^{\pm M} y^{\pm M} \cdot p , p) & \geq d(y^{\pm M} \cdot p , p) + 18 \delta + 1 \\
d( y^{\pm M} x^{\pm M} \cdot p , p) & \geq d(x^{\pm M} \cdot p , p) + 18 \delta + 1 
\end{aligned} \tag{$\dag \dag \dag$}
\end{align*}

 The inequalities in \eqref{1st} and \eqref{2nd} follow from the stable translation length. For an isometry $g$ on $X$, the stable translation length is defined by
 $$ \tau(g):= \lim_{k \to \infty} \frac{d(p,g^k \cdot p)}{k}.$$
 It is known that $\tau(g)$ is well-defined. Also, it does not depend on the choice of the basepoint $p \in X$, and 
 when $X$ is $\delta$-hyperbolic, then $g$ is loxodromic if and only if $\tau(g)>0$. For each $m>0$, $\tau(g^m)=m\tau(g)$.

 From these facts, we can obtain the first inequalities. Suppose $ d(x^{2n} \cdot p , p) < d(x^n \cdot p , p) + 18 \delta +1 $ for all $n$. Then we have 
 $$ \frac{d(x^{2n} \cdot p , p)}{n} < \frac{d(x^n \cdot p , p)}{n} + \frac{18 \delta +1}{n}. $$
 By taking $n \to \infty$, we have $\tau(x^2)<\tau(x)$ but $\tau(x)>0$. It is a contradiction. Indeed, this contradiction implies that the inequalities hold for sufficiently large $n$.

Similarly, we can show that all inequalities in \eqref{2nd} are satisfied for sufficiently large $M$.

The inequalities in \eqref{3rd} follow from the assumption that $x,y$ are independent. Since $x,y$ are independent, we get
$$ \lim_{n \to \infty} (x^{\pm n} \cdot p|y^{\pm n} \cdot p)_p < \infty $$ for any choice of signs $\pm$.
 This implies that for all $n$, $$ d(x^{\pm n} \cdot p , p)+d(y^{\pm n} \cdot p , p)-d(x^{\pm n} \cdot p , y^{\pm n} \cdot p) < B $$ for some $M$.
 Since $d(x^{\pm n} \cdot p,p) \to \infty$ as $n \to \infty$, we have $$ \lim_{n \to \infty} d(x^{\pm n} \cdot p , y^{\pm n} \cdot p) - d(y^{\pm n} \cdot p , p) = \infty. $$
 This means that there exists $M_1>0$ such that $$ d( x^{\pm N} y^{\pm N} \cdot p , p) - d(y^{\pm N} \cdot p , p) \geq 18 \delta + 1 $$ 
whenever $N>M_1$.
 Similarly, there exists $M_2>0$ such that $$ d( y^{\pm N} x^{\pm N} \cdot p , p) - d(x^{\pm N} \cdot p , p) \geq 18 \delta + 1 $$ 
whenever $N>M_2$.

Therefore, we show that all inequalities hold for sufficiently large $M$.
 So, the subgroup $\left < x^M, y^M \right>$ satisfies all conditions in Lemma $3.2$ in \cite{MR3514064}. The result now follows.
\end{proof}

 Now we prove the main result.

\begin{thm} \label{AHG:4-equality-CP}
Let $G$ be a finitely generated acylindrically hyperbolic group. Then 
$$ \ker \left ( G \curvearrowright \Cone_{\omega}(G,d_n) \right ) = K(G) $$
 for all ultrafilter $\omega$ and sequence $d_n$.
\end{thm}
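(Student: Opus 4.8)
The plan is to prove the two inclusions separately. The inclusion $K(G) \subseteq \ker(G \curvearrowright \Cone_\omega(G,d_n))$ is already available: Lemma \ref{EasyInclusion} gives $\FC(G) \subseteq \ker(G \curvearrowright \Cone_\omega(G,d_n))$, and Lemma \ref{Lem_KnownResults} gives $K(G) = \FC(G)$ (recall $G$ acylindrically hyperbolic guarantees $K(G)$ exists). Hence all the work is in the reverse inclusion $\ker(G \curvearrowright \Cone_\omega(G,d_n)) \subseteq K(G)$, which I would prove by contraposition: given $g \notin K(G)$, I will exhibit a single admissible sequence that $g$ moves in the cone.

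I fix a $\delta$-hyperbolic space $X$ on which $G$ acts non-elementarily and acylindrically, a basepoint $o \in X$, and record that the orbit map $G \to X,\ w \mapsto w\cdot o$, is $C$-Lipschitz for $C = \max_{s\in S} d(o, s\cdot o)$, so $\|w\|_S \geq \tfrac{1}{C} d(o, w\cdot o)$ for every $w \in G$. By Lemma \ref{Lem_KnownResults}, $K(G) = \ker(G \curvearrowright \partial X)$. Since $g \notin K(G)$, and since $g$ fixing every loxodromic fixed point would force $g$ to fix $L(G)$ pointwise (loxodromic fixed points are dense in $L(G)$ and $\Fix(g)$ is closed) — whence the pointwise stabilizer of $L(G)$, a normal subgroup, would have to be finite by Lemma \ref{infinite_normalsubgroup_acyl} and thus lie in $K(G)$ — I can choose a loxodromic element $h \in G$ whose attracting fixed point $\xi_+ = \xi_+(h)$ satisfies $g\xi_+ \neq \xi_+$. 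This selection step is the main obstacle.

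I then set $x_n := h^{k_n}$ with $k_n := \lceil d_n \rceil$. Admissibility is immediate from subadditivity of the word length: $\|x_n\|_S \leq k_n\|h\|_S$, so $\lim_\omega \|x_n\|_S / d_n \leq \|h\|_S < \infty$. For the displacement I estimate inside $X$:
$$\|x_n^{-1} g x_n\|_S \geq \tfrac{1}{C}\, d(o, h^{-k_n} g h^{k_n}\cdot o) = \tfrac{1}{C}\, d(h^{k_n}\cdot o,\; g h^{k_n}\cdot o).$$
Since $h^{k_n}\cdot o \to \xi_+$ and $g h^{k_n}\cdot o \to g\xi_+ \neq \xi_+$, the two orbit sequences converge to distinct boundary points, so $(h^{k_n}\cdot o \mid g h^{k_n}\cdot o)_o \leq B$ for some $B$ by the Gromov product criterion recalled in the preliminaries. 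Combining this with $d(o, h^{k_n}\cdot o) \geq k_n \tau(h)$ (Fekete's lemma applied to the subadditive sequence, with $\tau(h) > 0$ as $h$ is loxodromic) and $d(o, g h^{k_n}\cdot o) = d(g^{-1}o, h^{k_n}\cdot o) \geq k_n\tau(h) - d(o, go)$, the Gromov product identity yields $d(h^{k_n}\cdot o, g h^{k_n}\cdot o) \geq 2k_n\tau(h) - d(o,go) - 2B$. Therefore $\lim_\omega \|x_n^{-1} g x_n\|_S / d_n \geq 2\tau(h)/C > 0$, so $g$ does not fix $\{x_n\}$ and $g \notin \ker(G \curvearrowright \Cone_\omega(G,d_n))$, completing the contrapositive.

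The quantitative heart of the argument — the linear lower bound $d(o, h^{k}\cdot o) \geq k\tau(h)$ together with the boundedness of the Gromov product along divergent orbit sequences — is precisely the control packaged by Lemma \ref{QIembeddingL}; one could alternatively feed an independent loxodromic pair into that lemma to realize the required quasi-isometrically embedded orbit directly, which would streamline the displacement estimate. I expect the genuinely delicate point to remain the choice of $h$ with $g\xi_+(h) \neq \xi_+(h)$, where the identification $K(G) = \ker(G \curvearrowright \partial X)$ from Lemma \ref{Lem_KnownResults} is what converts the purely algebraic hypothesis $g\notin K(G)$ into usable geometric information.
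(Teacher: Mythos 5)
Your proof is correct, and although it follows the paper's overall skeleton (the easy inclusion via Lemmas \ref{EasyInclusion} and \ref{Lem_KnownResults}, then the contrapositive of the reverse inclusion tested on the admissible sequence of powers of a loxodromic $h$ whose attracting point is moved by $g$), the quantitative core is genuinely different. The paper arranges $\Fix(h)\cap\Fix(g^{-1}hg)=\emptyset$, runs ping-pong to obtain a free subgroup $\left< h^{k},(g^{-1}hg)^{k}\right>$, and invokes Lemma \ref{QIembeddingL} so that the word length of $z_n=g^{-1}h^{-nk}gh^{nk}$ in that free group transfers to a linear lower bound on $\| h^{-nk}gh^{nk}\|_S$; this is why the paper must choose $X$ with cobounded action (so $L(G)=\partial X$ and the density argument applies directly) and must secure disjointness of the two fixed-point pairs. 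You bypass the free subgroup entirely: Fekete's lemma gives $d(o,h^{k}\cdot o)\geq k\tau(h)$ with $\tau(h)>0$, and the criterion $[x_n]\not\approx[y_n]$ iff $\sup_{m,n}(x_m|y_n)_p<\infty$ bounds the Gromov product $(h^{k_n}\cdot o\mid gh^{k_n}\cdot o)_o$, and these two facts alone produce the same linear displacement estimate. Your route buys two things: you only need $g\xi_+(h)\neq\xi_+(h)$, which is weaker and easier to arrange than independence of $h$ and $g^{-1}hg$, and your argument works for an arbitrary $X$ on which $G$ acts non-elementarily and acylindrically, at the cost of the short extra argument that the pointwise stabilizer of $L(G)$ is a finite normal subgroup, hence inside $K(G)$. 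That step deserves to be spelled out rather than attributed to Lemma \ref{infinite_normalsubgroup_acyl} alone: the actual contradiction is that an infinite normal subgroup acts non-elementarily on $X$ (the remark following Lemma \ref{infinite_normalsubgroup_acyl}), so it contains a loxodromic element, which would then fix the at-least-three points of its limit set inside $L(G)$, impossible since loxodromics fix exactly two boundary points --- this is the same argument as in the proof of Lemma \ref{Lem_KnownResults}. What the paper's route buys in exchange is that it reuses Lemma \ref{QIembeddingL}, which it set up anyway, and the cobounded choice of $X$ makes the selection of $h$ immediate; your proof is the more elementary and the more general of the two.
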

\begin{proof}
 By Lemma \ref{EasyInclusion} and Lemma \ref{Lem_KnownResults}, it suffices to show $\ker \left ( G \curvearrowright \Cone_{\omega}(G,d_n) \right ) \subset \ker \left( G \curvearrowright \partial X \right )$ for some $\delta$-hyperbolic space $X$.
 We choose $X$ such that the action $G \curvearrowright X$ is cobounded (\cite[Theorem 1.2]{MR3430352} or \cite[Theorem 1.7]{MR3685605}). In this case, the limit set $L(G)$ is the same as the Gromov boundary $\partial X$.
 
 In order to obtain the inclusion, we will show that $g \not \in \ker \left( G \curvearrowright \partial X \right )$ implies $g \not \in \left ( G \curvearrowright \Cone_{\omega}(G,d_n) \right)$ so choose $g$ in the compliment of $\ker \left( G \curvearrowright \partial X \right )$.
 Since the set of loxodromic fixed points is dense in $L(G)$ (Corollary $7.4.3$ in \cite{MR3558533}) and $L(G)=\partial X$, there exists a loxodromic element $h \in G$ such that $$ [h^n] \neq g^{-1} \cdot [h^n] \textnormal{ and } \Fix(h) \cap \Fix(g^{-1}hg) = \emptyset.$$
 
 Let $x := h$ and $y:=g^{-1}hg$. Then using the ping-pong lemma, we can prove that $H := \left< x^{k} , y^{k} \right >$ is the free group of rank $2$ for sufficiently large $k$. By Lemma \ref{QIembeddingL}, we may assume that the orbit map $\left< x^k, y^k \right> \to X$ is a quasi-isometric embedding. Let $$ z_n := g^{-1} h^{-nk} g h^{nk} = y^{-nk}x^{nk} \in  H.$$
 Note that the length of $z_n$ in $H$ is $2n$ with respect to the generating set $\{ x^k, y^k \}$. We have
 $$ 2n \leq d(o,z_n \cdot o) \leq C ||z_n||_S $$ for $C := \sup_{s \in S} d(o,s \cdot o)$ where $S$ is a finite generating set for $G$. Note that the lower bound is obtained from Lemma 3.2 in \cite{MR3514064}. 
 This implies
 $$ ||z_n||_S \geq \frac{2}{C}n$$ and we thus get
 $$ \lim_{n \to \infty} \frac{|| h^{-nk}gh^{nk} ||_S}{n} = \frac{2}{C} > 0.$$
 
 Now suppose that we are given an unbounded non-decreasing sequence $d_n$ and any ultrafilter $\omega$. Then $\{ h^{\lfloor d_n \rfloor}\}$ is an admissible sequence, that is, $\{ h^{\lfloor d_n \rfloor}\} \in \Cone_{\omega}(G,d_n)$.
 Then $$\lim_{n \to \infty} \frac{|| h^{-k \lfloor d_n \rfloor} g h^{k \lfloor d_n \rfloor} ||_S}{d_n}=\frac{2}{C}>0$$ and it means $\{ h^{\lfloor d_n \rfloor}\} \neq \{ g h^{\lfloor d_n \rfloor}\}$ in the asymptotic cone $\Cone_{\omega}(G,d_n)$ for any ultrafilter $\omega$. 
\end{proof}

Combining with Lemma \ref{Lem_KnownResults}, we can conclude that the following subgroups coincide when $G$ is finitely generated acylindrically hyperbolic.

\begin{cor} \label{AHG:4-equality}
If $G$ is a finitely generated acylindrically hyperbolic group, then 
\begin{align*}
 \begin{aligned} 
\ker \left ( G \curvearrowright \Cone_{\omega}(G,d_n) \right ) = \ker \left( G \curvearrowright \partial X \right ) = K(G) = \FC(G) = \mathcal{A}(G)
\end{aligned}
\end{align*}
for all ultrafilter $\omega$, sequence $d_n$, and $\delta$-hyperbolic space $X$ on which $G$ acts non-elementarily and acylindrically.
\end{cor}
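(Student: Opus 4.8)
The plan is to deduce this corollary as a formal consequence of the two results already in hand, Theorem \ref{AHG:4-equality-CP} and Lemma \ref{Lem_KnownResults}, so that no new argument is needed beyond chaining equalities.

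First I would invoke Theorem \ref{AHG:4-equality-CP}, which supplies
$$ \ker \left ( G \curvearrowright \Cone_{\omega}(G,d_n) \right ) = K(G) $$
for every non-principal ultrafilter $\omega$ and every unbounded non-decreasing sequence $d_n$. The point to emphasize is that the right-hand side $K(G)$ is an intrinsic algebraic invariant of $G$: it refers neither to a choice of $\delta$-hyperbolic space nor to $\omega$ and $d_n$. This makes $K(G)$ a fixed anchor against which the remaining subgroups can be matched, and it is already what yields Corollary \ref{AHG:Kernel_Invariant} on invariance under $\omega$ and $d_n$.

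Next I would fix any $\delta$-hyperbolic space $X$ on which $G$ acts non-elementarily and acylindrically; such an $X$ exists by the very definition of acylindrical hyperbolicity (and may be taken cobounded, or a quasi-tree). Lemma \ref{Lem_KnownResults} then gives
$$ \ker \left( G \curvearrowright \partial X \right ) = K(G) = \FC(G) = \mathcal{A}(G), $$
separately for each such $X$, so the common value is independent of the choice of $X$. Substituting the anchor equality from the previous step into this chain identifies all five subgroups, which is precisely the assertion of the corollary.

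There is no remaining obstacle at this level: the genuine mathematical content lives inside Theorem \ref{AHG:4-equality-CP}, whose proof I have already carried out. The nontrivial direction there was the inclusion $\ker \left ( G \curvearrowright \Cone_{\omega}(G,d_n) \right ) \subseteq \ker \left( G \curvearrowright \partial X \right )$, which I obtained by producing, for each $g$ acting nontrivially on $\partial X$, a loxodromic $h$ with $\Fix(h) \cap \Fix(g^{-1}hg) = \emptyset$, forming the rank-two free subgroup $\left< h^k, (g^{-1}hg)^k \right>$ whose orbit map quasi-isometrically embeds via Lemma \ref{QIembeddingL}, and then testing the admissible sequence $\{ h^{\lfloor d_n \rfloor} \}$ to witness that $g$ moves a point of the cone. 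Granting that theorem together with Lemma \ref{Lem_KnownResults}, the present corollary is immediate.
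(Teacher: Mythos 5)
Your proposal is correct and matches the paper's own argument exactly: the paper likewise obtains the corollary by combining Theorem \ref{AHG:4-equality-CP} (giving $\ker \left ( G \curvearrowright \Cone_{\omega}(G,d_n) \right ) = K(G)$) with Lemma \ref{Lem_KnownResults} (giving $\ker \left( G \curvearrowright \partial X \right ) = K(G) = \FC(G) = \mathcal{A}(G)$ for every admissible $X$), using $K(G)$ as the common anchor. No gaps; the chaining is all that is needed.
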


 Recall that for a finitely generated group $G$, an asymptotic cone of $G$ depends on the choice of ultrafilter $\omega$ and non-decreasing sequence $d_n$. 
 Examples of such cases can be found in \cite{MR1734187} and \cite{MR2310154}. 
 In particular, an acylindrically hyperbolic group may have various asymptotic cones and this is guaranteed by the fact that for any finitely presented group $G$, the free product $G * F_n$ is acylindrically hyperbolic for some $n$, due to Osin \cite{MR3403956}.
 We also point out that $G$ has a lot of asymptotic cones for some relatively hyperbolic group $G$, and it is proved that if its peripheral subgroups have a unique asymptotic cone, then it also has the unique asymptotic cone \cite{MR2827204}.

  According to the characterization, we prove that the kernel of the action of an acylindrically hyperbolic group $G$ on its asymptotic cone is invariant even though $G$ may have a lot of asymptotic cones.

\begin{cor} \label{AHG:Kernel_Invariant}
 Let $G$ be a finitely generated acylindrically hyperbolic group. Then the kernel
 $$ \ker \left ( G \curvearrowright \Cone_{\omega}(G,d_n) \right ) $$ is invariant under ultrafilter $\omega$ and sequence $d_n$.
\end{cor}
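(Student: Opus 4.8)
The plan is to deduce this immediately from Theorem \ref{AHG:4-equality-CP}. That theorem asserts, for \emph{every} non-principal ultrafilter $\omega$ and \emph{every} unbounded non-decreasing sequence $d_n$, the single equality
$$ \ker \left ( G \curvearrowright \Cone_{\omega}(G,d_n) \right ) = K(G). $$
The point I would emphasize is that the right-hand side is a purely algebraic invariant of $G$: the unique maximal finite normal subgroup $K(G)$ is defined without any reference to a metric, an ultrafilter, or a scaling sequence, and (since $G$ is acylindrically hyperbolic) it exists by \cite[Theorem 2.24]{MR3589159}. Thus the kernel is always this one fixed subgroup, regardless of how $\omega$ and $d_n$ are chosen.

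Concretely, first I would note that the statement is non-vacuous because $K(G)$ exists. Then, given two arbitrary choices $(\omega_1, \{d_n^{(1)}\})$ and $(\omega_2, \{d_n^{(2)}\})$, I would apply Theorem \ref{AHG:4-equality-CP} to each to obtain
$$ \ker \left ( G \curvearrowright \Cone_{\omega_1}(G,d_n^{(1)}) \right ) = K(G) = \ker \left ( G \curvearrowright \Cone_{\omega_2}(G,d_n^{(2)}) \right ), $$
so the two kernels coincide. This is precisely the claimed invariance.

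There is no real obstacle remaining: all of the geometric content---producing a loxodromic element $h$ with $[h^n] \neq g^{-1}\cdot[h^n]$, passing to the free subgroup $\left< x^k, y^k \right>$, and building a quasi-isometric embedding via Lemma \ref{QIembeddingL}---was already carried out in the proof of Theorem \ref{AHG:4-equality-CP}, and that argument was deliberately arranged to be uniform in $\omega$ and $d_n$. Indeed, the decisive estimate $\lim_{n\to\infty} ||h^{-nk}gh^{nk}||_S / n = 2/C > 0$ is a statement purely about word length in $G$ and does not see the cone data at all; the cone and the sequence enter only at the very last step, where $\{ h^{\lfloor d_n \rfloor}\}$ is exhibited as a moved point for \emph{any} admissible reparametrization. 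The corollary thus merely records the elementary observation that a subgroup which equals a choice-independent object must itself be independent of those choices.
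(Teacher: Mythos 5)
Your proposal is correct and is exactly the argument the paper intends: the corollary is stated without a separate proof precisely because Theorem \ref{AHG:4-equality-CP} identifies the kernel with $K(G)$ for every choice of $\omega$ and $d_n$, and $K(G)$ is a purely algebraic object independent of those choices. Your two-choice comparison via $\ker ( G \curvearrowright \Cone_{\omega_1}(G,d_n^{(1)}) ) = K(G) = \ker ( G \curvearrowright \Cone_{\omega_2}(G,d_n^{(2)}) )$ is precisely how the paper's implicit deduction works, so there is nothing to add.
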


 Our main result can be applied to inner amenability, von Neumann algebras, and $C^*$-algebras of groups. It is known that many algebraic properties are related to inner amenability and such algebras of groups. For more details, we refer to Theorem 8.14 in \cite{MR3589159} and the references therein.
Applying our main theorem, we can extend Theorem 8.14 in \cite{MR3589159} to obtain the following corollary.

 \begin{cor}
Let $G$ be a finitely generated acylindrically hyperbolic group. Then the following conditions are equivalent.
\begin{enumerate}[label={\textnormal{(\arabic*)}}]
    \item $G$ has no nontrivial finite normal subgroups.
    \item $G$ contains a proper infinite cyclic hyperbolically embedded subgroup.
    \item $G$ is ICC. In other words, $\FC(G)=1$.
    \item $G$ is not inner amenable.
    \item The amenable radical of $G$ is trivial. 
    \item The natural action of $G$ on its asymptotic cone $\Cone_{\omega}(G,d_n)$ is faithful \\ for any ultrafilter $\omega$ and a sequence $d_n$.
    \item The natural action of $G$ on its asymptotic cone $\Cone_{\omega}(G,d_n)$ is faithful \\ for some ultrafilter $\omega$ and a sequence $d_n$.
    \item The induced action of $G$ on the Gromov boundary $\partial X$ is faithful \\ where $X$ is any $\delta$-hyperbolic space on which $G$ acts non-elementarily and acylindrically.
    \item The reduced $C^*$-algebra of $G$ is simple.
    \item The reduced $C^*$-algebra of $G$ has a unique normalized trace.
\end{enumerate}
 \end{cor}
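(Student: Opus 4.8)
The plan is to regard the equivalence of conditions (1)--(5), (9), and (10) as already known, since these constitute the content of Theorem 8.14 in \cite{MR3589159} (together with the standard characterizations of $C^*$-simplicity and the unique-trace property for acylindrically hyperbolic groups). Because $G$ is acylindrically hyperbolic, $K(G)$ exists by \cite[Theorem 2.24]{MR3589159}, and condition (1) is precisely the assertion $K(G)=1$. It then remains only to insert the three new conditions (6), (7), and (8) into this list, and for that I would show that each of them is equivalent to $K(G)=1$.

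The key point is that faithfulness of an action means exactly triviality of its kernel, and all the relevant kernels have already been computed. By Corollary \ref{AHG:4-equality}, for every ultrafilter $\omega$ and every sequence $d_n$ one has
$$\ker\left(G \curvearrowright \Cone_{\omega}(G,d_n)\right) = K(G),$$
while for every $\delta$-hyperbolic space $X$ on which $G$ acts non-elementarily and acylindrically one has
$$\ker\left(G \curvearrowright \partial X\right) = K(G).$$
Hence (6) (faithfulness for all $\omega$ and $d_n$) holds if and only if $K(G)=1$, and similarly (8) (faithfulness on $\partial X$) holds if and only if $K(G)=1$; this already threads (6) and (8) into the cycle, since (1) reads $K(G)=1$.

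The one step demanding a little care is the passage between (6) and (7), where \emph{a priori} ``faithful for some $\omega, d_n$'' looks weaker than ``faithful for all $\omega, d_n$''. Here I would invoke Corollary \ref{AHG:Kernel_Invariant}: the kernel $\ker\left(G \curvearrowright \Cone_{\omega}(G,d_n)\right)$ equals $K(G)$ regardless of the choice of $\omega$ and $d_n$. Thus the action is faithful for some choice exactly when it is faithful for every choice, so (7) too is equivalent to $K(G)=1$. Chaining these three equivalences onto the known equivalence of (1)--(5), (9), (10) finishes the argument.

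I do not anticipate a genuine obstacle: the substantive work already sits in Theorem \ref{AHG:4-equality-CP} and its corollaries, and what is left is the bookkeeping of folding faithfulness conditions into an existing chain. The only subtlety worth flagging is the (6) $\Leftrightarrow$ (7) step, which would fail for a general group with wildly varying asymptotic cones but holds here precisely because our computation of the kernel is uniform in $\omega$ and $d_n$.
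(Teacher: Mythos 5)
Your proposal is correct and follows essentially the same route as the paper: the paper likewise cites \cite[Theorem 8.14]{MR3589159} for the classical conditions, uses Lemma \ref{Lem_KnownResults} for the equivalence of (1), (3), (5), (8), and obtains the equivalence of (1), (6), (7) from the fact that $\ker\left(G \curvearrowright \Cone_{\omega}(G,d_n)\right) = K(G)$ uniformly in $\omega$ and $d_n$ (Theorem \ref{AHG:4-equality-CP} and Corollary \ref{AHG:Kernel_Invariant}). Your only deviation is a harmless bookkeeping one — attributing (5) to the known-results package rather than separately to Lemma \ref{Lem_KnownResults} — and your flagged subtlety about (6) $\Leftrightarrow$ (7) is exactly the point the paper's uniform kernel computation is designed to handle.
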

 The equivalence of $(1),(2),(3),(4),(9),$ and $(10)$ is proved in \cite[Theorem 8.14]{MR3589159}. Our contribution in the list is the equivalence of $(1), (6),$ and $(7),$ and the equivalence of $(1),(3),(5),$ and $(8)$ is a consequence of Lemma \ref{Lem_KnownResults}.


\section{Applications to Other Spaces at ``Infinity"} \label{sec:App1}
 We devote this section to connecting the natural action on an asymptotic cone and the canonical action on other spaces at ``infinity".
 The main results in this section demonstrate that the kernels of the canonical group action on many spaces at ``infinity" are the same as the kernel of the natural action on asymptotic cones.
 This section covers the limit set of non-elementary convergence group actions, non-trivial Floyd boundary, and various interesting boundaries of a $\CAT(0)$ group. 
 We prove them by using acylindrical hyperbolicity and comparing both kernels with the subgroups in Lemma \ref{Lem_KnownResults}. 
  
\subsection{Non-elementary convergence groups case} \label{sec:CG}
 First, we will consider non-elementary convergence group cases. As we mentioned in the introduction, non-elementary convergence groups are acylindrically hyperbolic \cite{MR3995017}. 
 In this subsection, we will relate the kernel of the action on its asymptotic cone to the kernel of a group acting on its limit set. We will briefly recall the notion of the convergence group.
 
 The convergence group action was first suggested by Gehring and Martin \cite{MR896224}. 
 They considered only actions on the closed $n$-ball or the ($n-1$)-dimensional sphere and later Tukia extended this to actions on general compact Hausdorff spaces \cite{MR1313451}. 
 This can be seen as a generalization of the action of the Kleinian group by Möbius transformations, as well as a generalization of the action of non-elementary hyperbolic groups on their Gromov boundary and the action of non-elementary relatively hyperbolic groups on their Bowditch boundary.
 
 First, we define the convergence group action.
 Our definition adopts the stronger condition, namely, we will consider only group actions on a compact metrizable space.
 
\begin{defn}
 Let $M$ be a compact metrizable space and suppose that a group $G$ acts on $M$ by homeomorphisms.
 This action is called a \emph{convergence action} if, for every infinite distinct sequence of elements $g_n \in G$, 
 there exists a subsequence $g_{n_k}$ and points $x,y \in M$ (not necessarily distinct) satisfying the following.
 
\begin{itemize}
    \item for every open set $U$ containing $y$ and every compact set $K \subset M -\{ x \}$, there exists $N>0$ such that for every $k > N$, $g_{n_k}(K) \subset U$. 
\end{itemize}

 In this case, we also say that $G$ is a convergence group or $G$ acts on $M$ as a convergence group, etc.
\end{defn}

 When $G$ acts on $M$ as a convergence group, there are two subsets of $M$, \emph{the domain of discontinuity} $\Omega(G)$ (also called the \emph{ordinary set}) and the \emph{limit set} $L(G)$. We define the domain of discontinuity, $\Omega(G)$ by the set of all points in $M$ where $G$ acts properly discontinuously and the limit set as the complement, that is, $L(G) := M - \Omega(G)$. Recall that a group action of $G$ on a topological space $M$ is \emph{properly discontinuous} if for every compact subset $K \subset M$, the set
 $\left \{ g \in G : g \cdot K \cap K \neq \emptyset \right \}$ is finite.

 We want to point out that any finitely generated group can be considered as a convergence group with $|L(G)|=1$ (Example 1.2 in \cite{MR1452414}). 
 Also, for any group $G$ and a discrete topological space $M$ with $|M|=2$, the trivial action of $G$ on $M$ is a convergence group action.
To avoid such trivial cases, we only consider non-elementary convergence groups.
 
\begin{defn}
 Let $G$ be a convergence group acting on $M$. If $|L(G)| \geq 3$, then $G$ is said to be \emph{non-elementary}. Otherwise, we say that it is \emph{elementary}.
\end{defn}

The following fact is well-known.

\begin{fact}
 Let $G$ be a group acting on a compact metrizable space $M$ with $|M|>2$, by homeomorphisms and $\Trip(M):=\left \{ (x,y,z) : x,y,z \in M , |\{ x,y,z \}| = 3 \right \}$ be the set of distinct triples in $M$.
 Then $G$ is a convergence group if and only if the action of $G$ on $\Trip(M)$ is properly discontinuous.
\end{fact}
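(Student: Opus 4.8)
The plan is to prove the two implications separately, treating them asymmetrically: the forward direction is routine, while the reverse is the substantive one.

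For the forward implication (convergence action $\Rightarrow$ proper discontinuity on $\Trip(M)$), I would argue by contradiction. A compact $C\subset\Trip(M)$ witnessing a failure of proper discontinuity produces infinitely many distinct $g_n$ and triples $\tau_n\in C$ with $g_n\tau_n\in C$. The key geometric input is that compactness of $C$ inside $\Trip(M)$ forces a uniform lower bound $\epsilon_0>0$ on the pairwise distances of the coordinates of every triple in $C$, since the minimum-pairwise-distance function is continuous and strictly positive on $\Trip(M)$ and hence attains a positive minimum on $C$. Applying the convergence property to $(g_n)$ gives a subsequence, a repelling point $x$, and an attracting point $y$ with $g_{n_k}\to y$ locally uniformly on $M\setminus\{x\}$. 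After passing to a convergent subsequence of the $\tau_{n_k}$, at most one coordinate of the limit triple can equal $x$, so at least two coordinate sequences eventually lie in compacta avoiding $x$; local uniform convergence then drives both of the corresponding coordinates of $g_{n_k}\tau_{n_k}\in C$ to the single point $y$, collapsing two coordinates and violating the $\epsilon_0$-separation. This contradiction closes the direction.

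For the reverse implication I would first extract the dynamics from proper discontinuity via a \emph{degeneration lemma}: for distinct $g_n$ and any fixed $\tau\in\Trip(M)$, the orbit $g_n\tau$ leaves every compact subset of $\Trip(M)$, which follows by applying proper discontinuity to $C\cup\{\tau\}$ for each candidate compact $C$. Since $M$ is compact metrizable, I fix a countable dense set $D$ and diagonalize to a subsequence along which $g_n d$ and $g_n^{-1}d$ converge in $M$ for all $d\in D$. Degeneration applied to triples from $D$ then shows the forward limits take at most two values (three distinct limits would give a triple whose image converges inside $\Trip(M)$), and likewise for $g_n^{-1}$. The heart of the argument, and the step I expect to be the main obstacle, is promoting ``at most two limit values on $D$'' to the genuine north--south picture with a single attracting point $y$ and single repelling point $x$. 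The difficulty is that the triple condition alone cannot exclude a \emph{balanced} collapse, in which two or more points map forward to $y$ while two or more map forward to a distinct $y'$: once two coordinates of a triple already share a limit, the degeneration constraint becomes vacuous. I would break this symmetry by feeding the configuration back through $g_n^{-1}$ (again distinct, again degenerating on triples) together with injectivity of homeomorphisms: if two distinct points collapse forward to $y$, then $g_n^{-1}$ takes values near two distinct points on every neighborhood of $y$, so $y$ must be an \emph{exceptional} point for $g_n^{-1}$. The crux is then a uniqueness lemma, that a degenerating distinct sequence can have at most one such exceptional point, which I would establish directly from injectivity and degeneration of $g_n^{-1}$ on suitable triples; carrying this out for $g_n$ and $g_n^{-1}$ simultaneously, so as to avoid circularity, is the delicate part requiring the most care.

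Finally I would upgrade the dense-set convergence to local uniform convergence. Granting the symmetric conclusion that $g_n^{-1}\to x$ locally uniformly on $M\setminus\{y\}$, uniformity for $g_n$ follows by a clean contradiction: if some compact $K\subset M\setminus\{x\}$ and neighborhood $U$ of $y$ admitted points $k_n\in K$ with $g_n(k_n)\notin U$, then $g_n(k_n)$ would lie in the compact set $M\setminus U$ avoiding $y$, whence $k_n=g_n^{-1}(g_n(k_n))\to x$ by the repelling dynamics of $g_n^{-1}$, contradicting $k_n\in K$. Thus $g_n(K)\subset U$ eventually, which is exactly the convergence property for the chosen subsequence, and since every distinct sequence was handled, $G$ is a convergence group.
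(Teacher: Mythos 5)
The paper itself offers no proof of this Fact --- it is quoted as well-known (it goes back to Gehring--Martin and Tukia, with Bowditch's \emph{Convergence groups and configuration spaces} as the standard reference) --- so your argument has to stand entirely on its own. Your forward direction does: the positive minimum $\epsilon_0$ of pairwise coordinate distances on the compact set $C\subset\Trip(M)$, together with the observation that at most one coordinate of the limiting triple can equal the repelling point $x$, forces two coordinates of $g_{n_k}\tau_{n_k}\in C$ to collapse to the attracting point $y$, contradicting $\epsilon_0$-separation.

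The reverse direction, however, has a genuine gap at the final step. You derive ``$g_n\to y$ locally uniformly on $M\setminus\{x\}$'' from the premise ``$g_n^{-1}\to x$ locally uniformly on $M\setminus\{y\}$'', but that premise is never proved, and by the symmetry of the roles of $(g_n)$ and $(g_n^{-1})$ it is exactly as hard as the conclusion: your steps only deliver pointwise convergence on the countable dense set $D$ (off at most one exceptional point) for each of the two sequences, and dense pointwise convergence does not upgrade itself to locally uniform convergence --- that upgrade is the entire content of the convergence property, and a sequence of homeomorphisms with no equicontinuity can converge pointwise on a dense set while doing anything at all along a moving sequence of points, unless proper discontinuity is invoked \emph{again} on triples containing those moving points. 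Concretely, the repair is: if $k_n\in K\subset M\setminus\{x\}$, $k_n\to k$ and $g_nk_n\to z\neq y$, apply degeneration of $(g_n^{-1})$ to the convergent moving triples $\left(g_nd_1,\,g_nk_n,\,p\right)\to(y,z,p)$, where $d_1,p\in D$ avoid the exceptional points and $p\notin\{y,z\}$; since $g_n^{-1}(g_nd_1)=d_1$, $g_n^{-1}(g_nk_n)=k_n\to k$, and $g_n^{-1}p\to x$, degeneration forces $x\in\{d_1,k\}$, and varying $d_1$ gives $x=k$, contradicting $k\in K$.

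This repair also exposes a second, smaller gap that infects your ``uniqueness of exceptional point'' step as well: your degeneration lemma is stated for a \emph{fixed} triple $\tau$, but both the exclusion of balanced collapse and the argument above need it along a sequence of triples $\tau_n\to\tau$ in $\Trip(M)$. The strengthening is proved identically (apply proper discontinuity to the compact set $C\cup\{\tau_n : n\in\NN\}\cup\{\tau\}$), but it is genuinely necessary rather than cosmetic: in a balanced collapse, where $g_na,g_nb\to y$ and $g_nc,g_nd\to y'$, every \emph{fixed} triple drawn from $\{a,b,c,d\}$ (or from these together with any other fixed point) degenerates automatically, so the fixed-triple lemma can never produce a contradiction; one must feed the moving triples $\left(g_na,\,g_nc,\,p\right)$ and $\left(g_nb,\,g_nd,\,p\right)$ to $(g_n^{-1})$, forcing $g_n^{-1}p$ to accumulate on both $\{a,c\}$ and $\{b,d\}$, which are disjoint. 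With these two repairs your architecture --- which is essentially the standard Bowditch-style argument --- does go through.
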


 This fact implies that, if a subgroup $H$ of $G$ fixes distinct three points on $M$, then $H$ should be finite. Thus the kernel of the convergence group action is finite. 
 Conversely, any finite normal subgroup $N$ of $G$ acts trivially on the limit set $L(G)$ when $G$ is non-elementary (see \cite{martin1986generalizations} and \cite{MR1452414}) so it immediately follows that the kernel of the convergence group action coincides with $K(G)$.
Since a non-elementary convergence group is acylindrically hyperbolic \cite{MR3995017},
applying Corollary \ref{AHG:4-equality}, we have the following corollary.

 \begin{cor} \label{CG:4-equality}
 Let $G$ be a finitely generated group. Suppose that $G$ admits a convergence group action $ G \curvearrowright M$ with $|L(G)| > 2$. Then
$$ \ker \left ( G \curvearrowright \Cone_{\omega}(G,d_n) \right ) = \ker \left( G \curvearrowright L(G) \right ) = K(G) = \FC(G) = \mathcal{A}(G) $$
 for all ultrafilter $\omega$ and sequence $d_n$.
\end{cor}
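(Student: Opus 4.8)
The plan is to reduce Corollary \ref{CG:4-equality} to the already-proven Corollary \ref{AHG:4-equality} by establishing two facts: first, that a non-elementary convergence group is acylindrically hyperbolic, so that the master equality
\begin{align*}
\ker \left ( G \curvearrowright \Cone_{\omega}(G,d_n) \right ) = \ker \left( G \curvearrowright \partial X \right ) = K(G) = \FC(G) = \mathcal{A}(G)
\end{align*}
applies; and second, that the kernel $\ker \left( G \curvearrowright L(G) \right)$ of the action on the limit set equals $K(G)$. The acylindrical hyperbolicity is not something I need to reprove: it is quoted from \cite{MR3995017}, whose theorem states precisely that a finitely generated non-elementary convergence group acts acylindrically and non-elementarily on some $\delta$-hyperbolic space. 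The hypothesis $|L(G)| > 2$ is exactly the non-elementarity condition, so Corollary \ref{AHG:4-equality} immediately gives every equality in the chain except the one involving $L(G)$.

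For the remaining identification $\ker \left( G \curvearrowright L(G) \right) = K(G)$, I would argue by two inclusions. For the inclusion $\ker \left( G \curvearrowright L(G) \right) \subseteq K(G)$, I use the standard characterization recalled in the excerpt: $G$ is a convergence group precisely when its action on the space of distinct triples $\Trip(L(G))$ (or $\Trip(M)$ restricted to the limit set) is properly discontinuous. If a subgroup $H$ fixes three distinct points of $L(G)$, then $H$ stabilizes a point of $\Trip(L(G))$, and proper discontinuity forces $H$ to be finite. Since the kernel of $G \curvearrowright L(G)$ fixes \emph{every} point of $L(G)$ — and $|L(G)| \geq 3$ guarantees at least three distinct points to fix — the kernel is finite. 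Being a finite normal subgroup, it is contained in the unique maximal finite normal subgroup $K(G)$, which exists because $G$ is acylindrically hyperbolic. For the reverse inclusion $K(G) \subseteq \ker \left( G \curvearrowright L(G) \right)$, I invoke the fact, cited in the excerpt from \cite{martin1986generalizations} and \cite{MR1452414}, that any finite normal subgroup of a non-elementary convergence group acts trivially on $L(G)$; the intuition here is that a finite normal subgroup preserves the fixed-point pair of every loxodromic element and these fixed points are dense in $L(G)$, forcing the action to be trivial by continuity.

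The main obstacle, to the extent there is one, is purely bookkeeping rather than mathematical: one must make sure the convergence-group action $G \curvearrowright M$ restricts sensibly to $L(G)$ and that the $\delta$-hyperbolic space $X$ supplied by acylindrical hyperbolicity is \emph{not} required to satisfy $\partial X = L(G)$. This subtlety is flagged in the introduction, where the authors warn that in general $\partial X \neq L(G)$ for a non-elementary convergence group. The proof sidesteps this entirely: both $\ker \left( G \curvearrowright \partial X \right)$ and $\ker \left( G \curvearrowright L(G) \right)$ are computed independently and each is shown to equal $K(G)$, so the two boundaries need never be compared directly. Once both equal $K(G)$, transitivity of equality assembles the full chain displayed in the statement, for every choice of ultrafilter $\omega$ and sequence $d_n$, since the asymptotic-cone kernel was already shown in Corollary \ref{AHG:4-equality} to be independent of these choices.
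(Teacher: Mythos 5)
Your proposal is correct and follows essentially the same route as the paper: quote \cite{MR3995017} for acylindrical hyperbolicity so that Corollary \ref{AHG:4-equality} supplies every equality except the one involving $L(G)$, then identify $\ker\left(G \curvearrowright L(G)\right)$ with $K(G)$ by combining the proper discontinuity of the action on distinct triples (kernel fixes three distinct points, hence is finite, hence lies in $K(G)$) with the cited fact from \cite{martin1986generalizations} and \cite{MR1452414} that finite normal subgroups act trivially on the limit set. Your explicit remark that $\partial X$ and $L(G)$ never need to be compared directly is a point the paper only flags in the introduction, but it does not change the argument.
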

 

\subsection{Groups with non-trivial Floyd boundary case} \label{sec:GFB}
 In this subsection, we investigate another well-understood boundary, the Floyd boundary.
This boundary was first suggested by Floyd in \cite{MR568933}. First, we recall the definition. 
We will use the definition from \cite{MR4052190} and \cite{MR3120738}.

\begin{defn}
 Let $G$ be a finitely generated group and $f:\NN \cup \{ 0 \} \to \RR$ be a function, called the \emph{Floyd function}, satisfying the following conditions.
\begin{itemize}
    \item $f(n)>0$ for all $n \in \NN$ and $f(0)=f(1)$.
    \item There exists $K \geq 1$ such that $$ 1 \leq \frac{f(n)}{f(n+1)} \leq K $$ for all $n$.
    \item $f$ is summable, that is, $$ \sum_{n=1}^{\infty} f(n) < \infty.$$
\end{itemize}
Fix a finite generating set $S$ for $G$. Now we construct a new graph $\Gamma_f(G,S)$. 
Combinatorially, $\Gamma_f(G,S)$ is the same as the Cayley graph $\Gamma(G,S)$.
We assign a length of an edge $l$ between two vertices $v,w$ to $f(n)$ where $n$ is the distance between the identity $e$ and the edge $l$ in the Cayley graph. With this new length, we can give a path metric on $\Gamma_f(G,S)$. Namely, for any two vertices $a,b$, we define the Floyd distance by
$$ d_f(a,b) = \inf \left \{ \sum_{i=1}^n f \left( \min \left \{ d(e,p_{i-1}) , d(e,p_{i}) \right \} \right ) \right \} $$ where the infimum is taken over all paths $a=p_0,p_1, \cdots , p_n=b$.
 The metric completion $\overline{G_f(G,S)}$ is called the \emph{Floyd completion}, and the subspace $\partial_F (G,S,f) := \overline{G_f(G,S)} - G_f(G,S) $ is the \emph{Floyd boundary} of $G$.
\end{defn}

 We say that $G$ has a \emph{non-trivial Floyd boundary} if $|\partial_F(G,S,f)|>2$.

\begin{rmk}
 A Floyd boundary depends on the choice of the Floyd function $f$ but we will omit $f$ in the notation when there is no confusion. Similarly, we will omit $S$ so we write simply $\partial_F G$.
\end{rmk}

 The Floyd boundary is related to many well-known boundaries. For example, Gromov showed that if $G$ is hyperbolic, then $\partial_F G$ is homeomorphic to the Gromov boundary \cite{MR919829}, and Gerasimov proved that if $G$ is relatively hyperbolic, then there is a continuous equivariant map from $\partial_F G$ to $\partial_B G$ \cite{MR2989436}. 
 They obtained the results by putting $f(n)=\lambda^n$ for some $0<\lambda<1$ as a Floyd function.
 In Floyd's original paper \cite{MR568933}, he also showed that for given a geometrically finite discrete subgroup of $\Isom(\HH^3)$, there is a continuous equivariant map from $\partial_F G$ to the limit set $L(G)$, with a Floyd function $f(n)=\frac{1}{n^2+1}$.
 
 It is known that the induced action of $G$ on its non-trivial Floyd boundary is a convergence group action, thanks to A. Karlsson \cite{MR2005231}. Combining the fact that the limit set is exactly $\partial_F G$, we have the following.

\begin{cor} \label{FB:4-equality}
 Suppose that a finitely generated group $G$ has non-trivial Floyd boundary $\partial_F G$. Then 
 $$ \ker \left ( G \curvearrowright \Cone_{\omega}(G,d_n) \right ) = \ker \left( G \curvearrowright \partial_F G \right ) = K(G) = \FC(G) = \mathcal{A}(G) $$
 for all ultrafilter $\omega$ and sequence $d_n$. 
Moreover, the kernel $\ker \left( G \curvearrowright \partial_F G \right )$ is invariant under the choice of the Floyd function.
\end{cor}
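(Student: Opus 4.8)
The plan is to deduce this corollary directly from Corollary \ref{CG:4-equality}, once we have verified that the action of $G$ on its Floyd completion fits the framework of a non-elementary convergence group action. First I would invoke Karlsson's theorem \cite{MR2005231}: the natural $G$-action on $G$ extends to an action by homeomorphisms on the Floyd completion $\overline{G_f(G,S)}$, and this extended action is a convergence action. Because the Floyd function $f$ is summable, the completion $\overline{G_f(G,S)}$ is a compact metrizable space, so it serves as the space $M$ appearing in the definition of a convergence group. Moreover, the limit set $L(G)$ of this convergence action is exactly the Floyd boundary $\partial_F G$.

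Next I would check the non-elementarity hypothesis. By assumption the Floyd boundary is non-trivial, that is $|\partial_F G| > 2$; since $L(G) = \partial_F G$, this gives $|L(G)| > 2$, which is precisely the condition required by Corollary \ref{CG:4-equality}. Applying that corollary with $M = \overline{G_f(G,S)}$ then yields
$$ \ker \left ( G \curvearrowright \Cone_{\omega}(G,d_n) \right ) = \ker \left( G \curvearrowright L(G) \right ) = K(G) = \FC(G) = \mathcal{A}(G), $$
and substituting $L(G) = \partial_F G$ lets us replace $\ker \left( G \curvearrowright L(G) \right )$ by $\ker \left( G \curvearrowright \partial_F G \right )$, giving the stated chain of equalities for every ultrafilter $\omega$ and sequence $d_n$.

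For the final assertion on invariance under the choice of Floyd function, the key observation is that three of the five subgroups in the equality, namely $K(G)$, $\FC(G)$, and $\mathcal{A}(G)$, are defined purely in terms of the group structure of $G$ and carry no dependence on the Floyd function $f$ (nor on the generating set $S$). The argument above shows that for any Floyd function $f$ yielding a non-trivial boundary one has $\ker \left( G \curvearrowright \partial_F G \right ) = K(G)$; as the right-hand side does not depend on $f$, neither does the left-hand side. Hence $\ker \left( G \curvearrowright \partial_F G \right )$ is the same subgroup of $G$ regardless of which Floyd function is used.

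I expect essentially no serious obstacle here, since the statement is a formal consequence of Corollary \ref{CG:4-equality} together with Karlsson's theorem. The only points deserving care are the verification that $\overline{G_f(G,S)}$ is genuinely compact and metrizable, so that the convergence-group framework of the definition applies, and that the limit set of the convergence action coincides with $\partial_F G$; both are standard facts about the Floyd completion that I would cite rather than reprove.
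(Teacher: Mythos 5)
Your proposal is correct and follows exactly the paper's route: invoke Karlsson's theorem that the action associated to a non-trivial Floyd boundary is a convergence action whose limit set is precisely $\partial_F G$, then apply Corollary \ref{CG:4-equality}, with the invariance under the Floyd function falling out of the equality with the group-theoretic invariant $K(G)$. No gaps; your extra care about compactness/metrizability of the Floyd completion and the explicit invariance argument are fine elaborations of points the paper leaves implicit.
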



\subsection{CAT(0) groups with rank-one isometries case} \label{sec:CAT0}
 In this subsection, we apply our main theorem to $\CAT (0)$ spaces.
 Namely, we will relate the kernel of the action on some boundaries of $\CAT (0)$ spaces to the kernel of the action on asymptotic cones. 
 $\CAT (0)$ spaces have many classical boundaries including the visual boundary $\partial_v X$ and Morse boundary $\partial_* X$. The definition of the visual boundary is similar to the Gromov boundary of a proper $\delta$-hyperbolic space but it lacks the desirable properties of the Gromov boundary. Of particular note is that this is not a quasi-isometry invariant \cite{MR1746908}.
Furthermore, it is proved by Cashen \cite{MR3550299} that the Morse boundary is also not a quasi-isometry invariant. 
 This obstructs the boundary to relate to a group. Namely, if $G$ acts geometrically on two $\CAT(0)$ spaces $X_1$ and $X_2$, then the visual boundaries $\partial_v X_1$ and $\partial_v X_2$ may be different. 

  On the other hand, the sublinearly Morse boundary is known to be quasi-isometry invariant so the sublinearly Morse boundary of a group is well-defined whenever $G$ acts geometrically on $\CAT(0)$ space \cite{MR4423805}. When $G$ acts isometrically, properly discontinuously, and cocompactly on $X$, we say that the action is \emph{geometric}.

 Suppose that a group $G$ acts geometrically on a proper $\CAT (0)$ space $X$ and $X$ has a non-empty sublinearly Morse boundary. Then $G$ contains a rank-one isometry \cite[Corollary 3.14]{MR4423140}. Also, it is known that if a group $G$ acts properly on a proper $\CAT (0)$ space $X$ and $G$ contains rank-one isometry $g$, then $g$ is contained in a virtually cyclic hyperbolically embedded subgroup (See Definition 1.2, Proposition 3.14 and Theorem 4.7 in \cite{MR3849623}). 
Note that this hyperbolically embedded subgroup may not be a proper subgroup so we can summarize the above as follows.

\begin{prop}[\cite{MR4423140} and \cite{MR3849623}] \label{Sublinearly-Rank-one-Acylindrical}
 Let $G$ be a group acting geometrically on a proper $\CAT (0)$ space $X$.
 If $G$ contains a rank-one isometry and $G$ is not virtually cyclic, then $G$ is acylindrically hyperbolic.
\end{prop}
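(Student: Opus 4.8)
The plan is to deduce the statement directly from the characterization of acylindrical hyperbolicity in Theorem~\ref{Equi_Defs_AH}: by condition~(3), it suffices to exhibit a \emph{proper} infinite hyperbolically embedded subgroup of $G$. Since $G$ acts geometrically on $X$, in particular it acts properly, so the hypotheses needed to apply the cited results of \cite{MR3849623} are in force.

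First I would fix the rank-one isometry $g$. A rank-one isometry is axial, hence of infinite order, so every subgroup of $G$ containing $g$ is infinite; recall also that, under the Convention of this paper, ``virtually cyclic'' already includes being infinite. Invoking \cite{MR3849623} (Definition~1.2, Proposition~3.14, and Theorem~4.7), I obtain a virtually cyclic subgroup $H \leq G$ with $g \in H$ that is hyperbolically embedded, $H \hookrightarrow_h G$. At this point $H$ is infinite and hyperbolically embedded, but --- as the discussion preceding the statement warns --- it need not be proper, since any group is trivially hyperbolically embedded in itself.

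The decisive step, and the only place where the hypothesis that $G$ is not virtually cyclic is used, is to rule out $H = G$. If $H$ were equal to $G$, then $G$ would itself be virtually cyclic, contradicting our assumption; therefore $H \neq G$ is a proper, infinite, hyperbolically embedded subgroup, and Theorem~\ref{Equi_Defs_AH}(3) yields that $G$ is acylindrically hyperbolic. The main obstacle is precisely this properness: all the genuine geometric input --- extracting a hyperbolically embedded virtually cyclic subgroup from a rank-one isometry acting on a proper $\CAT(0)$ space --- is supplied by \cite{MR3849623}, so the remaining task is the short logical step that isolates the proper case from the non-virtually-cyclic hypothesis.
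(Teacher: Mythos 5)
Your proposal is correct and follows essentially the same route as the paper: the paper also cites \cite{MR3849623} (Definition~1.2, Proposition~3.14, Theorem~4.7) to place the rank-one isometry inside a virtually cyclic hyperbolically embedded subgroup, notes exactly as you do that this subgroup ``may not be a proper subgroup,'' and uses the non-virtually-cyclic hypothesis to rule out that case before applying condition~(3) of Theorem~\ref{Equi_Defs_AH}. Nothing is missing.
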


 Therefore, based on the assumption above, we can directly use Corollary \ref{AHG:4-equality}.  
 Our first application is to investigate the kernel of actions on the visual boundary. 
First of all, we briefly recall elementary $\CAT(0)$ geometry, important isometry on $CAT(0)$ spaces called rank-one isometry, and many boundaries of a $\CAT(0)$ space including the sublinearly Morse boundary.

\begin{defn}
 Suppose that $X$ is a geodesic space and $\triangle(p,q,r)$ is a geodesic triangle in $X$ with three vertices $p,q,r \in X$. 
 Let $\overline{\triangle}$ be the (unique) triangle in the Euclidean plane $\RR^2$ with the same side lengths. We call it the comparison triangle for $\triangle$.

 We say that $\triangle(p,q,r)$ in $X$ satisfies the \emph{$\CAT(0)$ inequality} if for any points $x,y \in \triangle(p,q,r)$ with $x$ and $y$ on the geodesics $[p,q]$ and $[p,r]$, respectively, and 
 if we choose two points $\overline{x}, \overline{y}$ in $\overline{\triangle}$ such that $d_X(p,x)=d_{\RR^2}(\overline{p},\overline{x})$ and $d_X(p,y)=d_{\RR^2}(\overline{p},\overline{y})$, then we have
 $$ d_X(x,y) \leq d_{\RR^2}(\overline{x},\overline{y}).$$
 We say that $X$ is a \emph{$\CAT(0)$ space} if all geodesic triangles in $X$ satisfy the $\CAT(0)$ inequality. 
\end{defn}

 In this subsection, we assume that a $\CAT(0)$ space is proper. Recall that a metric space $X$ is \emph{proper} if closed balls in $X$ are compact. With properness, $\CAT(0)$ space has more properties, in particular, the nearest-point projections. We explain it more precisely.
 Let $l$ be a geodesic line and $x$ be a point in $X$. Then there exists a unique point $p$ so that $p$ on $l$ and $d_X(x,l)=d_X(x,p)$. We call the point $p$ the \emph{nearest-point projection} from $x$ to $g$, and we denote it by $\pi_l(x)$.
 Next, we record the definition of the $\CAT(0)$ group.

\begin{defn}
 A group $G$ is said to be a \emph{$\CAT(0)$ group} if it acts discretely, cocompactly, and isometrically on some $\CAT(0)$ space.
\end{defn}

 By the definition, we allow a $\CAT(0)$ space to be flat so a free abelian group $\ZZ^n$ is a $\CAT(0)$ group. This is an immediate counterexamples for the generalization of the main theorem to general $\CAT(0)$ groups. However, we will prove that the generalization holds if a $\CAT(0)$ group contains a rank-one isometry. Now we introduce a rank-one isometry.

\begin{defn}
 Let $X$ be a $\CAT(0)$ space. 
\begin{itemize}
	\item An isometry $g$ on $X$ is called \emph{hyperbolic} (or \emph{axial}) if there exists a geodesic line $l:\RR \to X$ which is translated non-trivially by $g$, that is, $g(l(t))=l(t+a)$ for some $a$. The geodesic line $l$ is called an \emph{axis} of $g$.
	\item A \emph{flat half-plane} means a totally geodesic embedded isometric copy of a Euclidean half-plane in $X$.
	\item We say that an isometry $g \in \Isom(X)$ is \emph{rank-one} if it is hyperbolic and some (equivalently, every) axis of $g$ does not bound a flat half-plane.
\end{itemize}
\end{defn}

 Now we describe various boundaries of $\CAT(0)$ space $X$. First, we consider the visual boundary. As mentioned before, its construction is similar to the Gromov boundary of a proper $\delta$-hyperbolic space, namely, we use geodesic lines to define the visual boundary. 
 After establishing the visual boundary, we give a topology on the boundary, called the cone topology. Although there is another well-studied topology on the boundary, the Tits metric, we will not use it so we will omit any introductory explanation about the Tits metric.
 Instead, we refer to \cite{MR1744486} for readers who are interested in the metric.

\begin{defn}
 Let $X$ be a complete $\CAT(0)$ space. Two geodesic rays $c, c': [0,\infty) \to X$ are asymptotic if $$d_X\left( c(t),c'(t) \right) < K$$ for some $K$ and all $t \geq 0$.
 Let $\mathcal{R}(X)$ be the set of all geodesic rays and declare two geodesic rays $c, c'$ are equivalent if and only if two geodesics are asymptotic. Then the quotient space $\mathcal{R}(X)/ \sim$ is called the \emph{visual boundary} of $X$. We denote the visual boundary of $X$ by $\partial_v X$.
\end{defn}

 Now suppose that the geodesic ray $c$ and two positive numbers $t,r$ are given. Consider the following subset of $\mathcal{R}(X)$.
$$ B(c,t,r) := \left \{ c' : c'(0)=c(0) , d_X \left( c(t),c'(t) \right ) < r \right \}.$$
Then it follows that $B(c,t,r)$ forms a basis for a topology on $\partial_v X$. We call this topology the \emph{cone topology}.

 Next, we introduce the Morse boundary. One of the weaknesses of the visual boundary is that it is not a quasi-isometry invariant. As a result, there have been many attempts to construct a boundary of $\CAT(0)$ space which is quasi-isometry invariant, for several years. 
 In 2015, Ruth Charney and Harold Sultan suggested a new boundary called the Morse boundary (also known as the contracting boundary) \cite{MR3339446}. In the paper, we will denote it by $\partial_* X$.
 The Morse boundary is the subset of the visual boundary $\partial_v X$, consisting of Morse geodesic rays. Recall that a geodesic $c$ is called a \emph{Morse} if, for any constants $A \geq 1$ and $B \geq 0$, 
 there is a constant $M(A,B)$ only depending on $A,B$, such that for every $(A,B)$-quasi-geodesic $q$ with endpoints on $c$, we have $q$ is in the $M$ neighborhood of $c$.
 Also, we say that a geodesic $c$ is \emph{$D$-contracting} if for any points $x,y \in X$, 
 \begin{center}
 $d_X(x,y) < d_X(x,\pi_c(x))$ implies $d_X(\pi_c(x),\pi_c(y))<D$.
 \end{center}
 Recall that $\pi_c(x)$ is the nearest-point projection.
 We simply say that a geodesic $c$ is contracting if $c$ is $D$- contracting for some $D$.
 It is known that a geodesic $c$ is contracting if and only if it is Morse so we can make a boundary using contracting geodesics. This explains why people use the Morse boundary and the contracting boundary interchangeably.

 Maybe the most natural topology on $\partial_* X$ is a subspace topology but they suggested another topology. They use a direct limit to topologize. For more details, we refer to their paper \cite{MR3339446}.
 
 From the direction to find the quasi-isometry invariant boundary, Yulan Qing and Kasra Rafi generalize the notion of the Morse geodesic and construct a new boundary called the sublinearly Morse boundary \cite{MR4423805}. 
 As the name suggests, they generalize this by allowing the error term to be sublinear. The precise definition is the following.

\begin{defn}
 A map $\kappa : \RR \to \RR$ is called \emph{sublinear} if $\lim_{t \to \infty} \frac{\kappa(t)}{t}=0$.
\end{defn}
 Now fix a function $\kappa : [0,\infty) \to [1,\infty)$ that is monotone increasing, concave and sublinear.
 For a geodesic line $c$, and a constant $a$, we define the $(\kappa,a)$-neighborhood of $c$ by
 $$\mathcal{N}_{\kappa}(c,a) := \left \{ x \in X : d_X(x,c) \leq a \kappa( d_X(c(0),x) ) \right \}.$$
 We say that two quasi-geodesics $c_1$ and $c_2$ \emph{$\kappa$-fellow travel} each other if
 $$ c_1 \in \mathcal{N}_{\kappa}(c_2,\epsilon) , \ c_2 \in \mathcal{N}_{\kappa}(c_1,\epsilon) $$ for some $\epsilon$.
 Then it is known that $\kappa$-fellow traveling is an equivalence relation.
 Using a sublinear function $\kappa$ and the notion of $\kappa$-fellow traveling, we define a $\kappa$-Morse quasi-geodesic and a $\kappa$-contracting quasi-geodesic as follows.

\begin{defn}[\cite{MR4423805}, Definition 3.6]
 Let $\kappa$ be as before. A quasi-geodesic $c$ is called a \emph{(weakly) $\kappa$-Morse} if there is a function $$m_c : \RR_{+}^2 \to \RR_{+}$$ such that if $\alpha : [s,t] \to X$ is a $(q,Q)$-quasi-geodesic with endpoints on $c$, then
 $$ \alpha([s,t]) \subset \mathcal{N}_{\kappa} (c,m_c(q,Q) ).$$
\end{defn}

We remark that the authors of \cite{MR4423805} also defined the notion of strongly $\kappa$-Morse quasi-geodesic and $\kappa$-contracting quasi-geodesic (See Definition 3.7 and Definition 3.9 in \cite{MR4423805}). But we omit the introduction of these definitions since these are all equivalent \cite[Theorem 3.10]{MR4423805}.
 Now, we define a sublinearly Morse boundary of $X$.

\begin{defn} [\cite{MR4423805}, Definition 4.1]
 Let $X$ be a proper $\CAT(0)$ space and $\kappa$ be as before. The \emph{$\kappa$-Morse boundary} of $X$, denoted by $\partial_{\kappa} X$, is the set of $\kappa$-fellow traveling classes of $\kappa$-Morse quasi-geodesic rays in $X$.
\end{defn}

 We can use Theorem 3.10 in Proposition \cite{MR4423805} to define the $\kappa$-Morse boundary of $X$ as the set of $\kappa$-contracting geodesic rays in $X$, up to $\kappa$-fellow traveling.
 Since distinct geodesic rays do not $\kappa$-fellow travel each other \cite[Lemma 3.5]{MR4423805}, it means $$\partial_{\kappa} X \subset \partial_v X.$$

 In order to relate the kernel $\ker \left ( G \curvearrowright \Cone_{\omega}(G,d_n) \right )$ to other boundaries via the kernel of the actions,
we use the following remarkable relation between $\CAT(0)$ space and $\delta$-hyperbolic space.  
In \cite{MR4753310}, for given $\CAT(0)$ space $X$, they define a new metric $D$ such that new metric space $(X,D)$ is $\delta$-hyperbolic.
In addition, they proved a lot of interesting results, for instance, $g$ is rank-one for the action on $\CAT(0)$ space if and only if g acts loxodromically 
on $X_D$ \cite[Theorem C]{MR4753310}.
The space $X_D$ is called the \emph{curtain model} of $\CAT(0)$ space $X$ and the construction is quite technical so we refer to \cite{MR4753310} for readers who are interested in the construction or astonishing results therein.
 To obtain the main result of this subsection, we will use the following lemma.

\begin{lem} \label{X_D-kernel}
 Let $G$ be a finitely generated group $G$ acts geometrically on proper $\CAT(0)$ space $X$. If $G$ contains a rank-one isometry and $G$ is not virtually cyclic, then $$ \ker \left( G \curvearrowright \partial X_D \right ) $$ is finite.
\end{lem}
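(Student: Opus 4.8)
The plan is to leverage the acylindrical hyperbolicity of $G$ together with the structure of the curtain model. First I would invoke Proposition \ref{Sublinearly-Rank-one-Acylindrical} to conclude that $G$ is acylindrically hyperbolic, and recall from \cite{MR4753310} that the curtain model $X_D$ is $\delta$-hyperbolic with $G$ acting by isometries. By Theorem C of \cite{MR4753310}, the rank-one isometry $g$ guaranteed by hypothesis acts loxodromically on $X_D$; moreover the curtain model is built so that such a loxodromic element is WPD for the action $G \curvearrowright X_D$. Write $g^{+}, g^{-} \in \partial X_D$ for its pair of fixed points.

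The core of the argument is to show that the normal subgroup $N := \ker\left( G \curvearrowright \partial X_D \right)$ is contained in a virtually cyclic subgroup. Since every element of $N$ acts trivially on all of $\partial X_D$, in particular $N$ fixes both $g^{+}$ and $g^{-}$. Hence $N$ is contained in the setwise stabilizer $E(g)$ of the pair $\{ g^{+}, g^{-} \}$, which is the unique maximal virtually cyclic subgroup containing $g$. The fact that this stabilizer is genuinely virtually cyclic, rather than merely the setwise stabilizer of two boundary points, is exactly where the WPD property of $g$ on $X_D$ enters. Consequently $N$ is virtually cyclic, and in particular amenable.

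To finish, I would combine this with the finiteness of the amenable radical. As $N$ is an amenable normal subgroup, $N \subseteq \mathcal{A}(G)$, and since $G$ is acylindrically hyperbolic its amenable radical $\mathcal{A}(G)$ is finite by \cite[Corollary 7.3]{MR3430352} (this is also recorded in the proof of Lemma \ref{Lem_KnownResults}). Therefore $N = \ker\left( G \curvearrowright \partial X_D \right)$ is finite.

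The step I expect to be the main obstacle is the transfer of the WPD property from the rank-one isometry on the $\CAT(0)$ space $X$ to the curtain model $X_D$: verifying that the loxodromic element produced by Theorem C of \cite{MR4753310} is WPD for the action $G \curvearrowright X_D$, so that its elementary closure $E(g)$ is virtually cyclic. Everything else is a formal consequence of acylindrical hyperbolicity. An alternative, should the action $G \curvearrowright X_D$ turn out to be not only non-elementary but acylindrical, would be to bypass the $E(g)$ discussion entirely and apply Lemma \ref{Lem_KnownResults} directly with $X = X_D$, yielding $\ker\left( G \curvearrowright \partial X_D \right) = K(G)$, which is finite.
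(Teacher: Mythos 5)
Your proposal is correct, but it takes a genuinely different route from the paper. The paper never looks at the fixed points of the rank-one element; instead it shows that \emph{no infinite normal subgroup} $N$ of $G$ can equal $\ker\left(G \curvearrowright \partial X_D\right)$: by Lemma \ref{infinite_normalsubgroup_acyl} such an $N$ is itself acylindrically hyperbolic, hence contains a WPD element for some action on a hyperbolic space, which by Theorem F of \cite{MR4753310} is WPD (so loxodromic) for the action on $X_D$; since $\partial X_D$ is dense in $\partial_v X$ (Theorem L of \cite{MR4753310}) and $|\partial_v X|=\infty$ by \cite[Lemma 16]{MR2545250}, we get $|\partial X_D|\geq 3$, so a loxodromic cannot act trivially on $\partial X_D$ and $N$ is not the kernel. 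You instead bound the kernel directly: it fixes the endpoint pair of the rank-one element $g$, which is loxodromic on $X_D$ by Theorem C and WPD by the transfer result, so the kernel sits inside the elementary closure $E(g)$, which is virtually cyclic precisely because of WPD; being a normal amenable subgroup, the kernel then lies in the finite amenable radical $\mathcal{A}(G)$. Your route buys a cleaner endgame --- you need neither the density statement (Theorem L), nor the infinitude of $\partial_v X$, nor the s-normal subgroup lemma --- but it costs you two inputs the paper avoids: the Dahmani--Guirardel--Osin fact that the setwise stabilizer of the fixed pair of a WPD loxodromic is the maximal virtually cyclic subgroup $E(g)$, and a justification that $g$ itself is WPD on $X_D$ (the step you correctly flagged as the main obstacle; it does go through, either by the full strength of \cite{MR4753310}, or by combining Sisto's result quoted in Proposition \ref{Sublinearly-Rank-one-Acylindrical} --- rank-one elements lie in virtually cyclic hyperbolically embedded subgroups, hence are WPD for some hyperbolic action --- with Theorem F of \cite{MR4753310}, which is exactly the citation the paper uses). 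Your closing alternative, applying Lemma \ref{Lem_KnownResults} with $X=X_D$, should be treated with caution: the action on the curtain model is not known to be acylindrical, only to admit WPD elements, so that shortcut is not currently available.
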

\begin{proof}
 By Proposition \ref{Sublinearly-Rank-one-Acylindrical}, $G$ is acylindrically hyperbolic.
Let $N$ be arbitrary infinite normal subgroup of $G$. We will show that $N$ cannot be 
$$ \ker \left( G \curvearrowright \partial X_D \right ) $$ so the lemma follows.
By Lemma \ref{infinite_normalsubgroup_acyl}, $N$ is also acylindrically hyperbolic.
 Then by Theorem \ref{Equi_Defs_AH}, there exists a $\delta$-hyperbolic space $Y$ such that $N$ contains at least one WPD (loxodromic) element for the action on $Y$.

 It is proved that if $G$ is $\CAT(0)$ group acting on a $\delta$-hyperbolic space $H$ and $g \in G$ is a WPD element for the action on $H$, then $g$ is also WPD for the action on $X_D$ \cite[Theorem F]{MR4753310}. From this result, $N$ contains a WPD element for the action on $X_D$.
Recall that a WPD element is loxodromic.
To conclude $N \neq \ker \left( G \curvearrowright \partial X_D \right )$, we need to show $|\partial X_D| \geq 3$.

Note that, in our setting, $|\partial_v X|=\infty$ due to \cite[Lemma 16]{MR2545250}.
Also, $\partial X_D$ is known as a dense subset of $\partial_v X$ \cite[Theorem L]{MR4753310}.
Therefore, $\partial X_D$ is infinite and the lemma follows.
\end{proof} 

 We are now ready to characterize the kernel of $\CAT(0)$ groups acting on their asymptotic cones. First, we will show that the kernel is the same as the kernel of the natural action on the visual boundary.

\begin{cor} \label{CAT:4-equality}
 Suppose that a finitely generated group $G$ acts geometrically on proper $\CAT(0)$ space $X$. If $G$ contains a rank-one isometry and $G$ is not virtually cyclic, then we have 
  $$ \ker \left ( G \curvearrowright \Cone_{\omega}(G,d_n) \right ) = \ker \left( G \curvearrowright \partial_{v} X \right ) = \FC(G) = K(G) = \mathcal{A}(G) $$
 for all ultrafilter $\omega$ and sequence $d_n$.
\end{cor}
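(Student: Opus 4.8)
The plan is to append $\ker(G \curvearrowright \partial_v X)$ to the chain of equalities already furnished by Corollary \ref{AHG:4-equality}, so that the only genuinely new content is the identification $\ker(G \curvearrowright \partial_v X) = K(G)$. First I would record that the hypotheses force acylindrical hyperbolicity: by Proposition \ref{Sublinearly-Rank-one-Acylindrical}, $G$ is acylindrically hyperbolic, so Corollary \ref{AHG:4-equality} immediately gives
$$ \ker(G \curvearrowright \Cone_{\omega}(G,d_n)) = K(G) = \FC(G) = \mathcal{A}(G) $$
for every ultrafilter $\omega$ and sequence $d_n$. It therefore suffices to prove $\ker(G \curvearrowright \partial_v X) = K(G)$, and I would route this through the curtain model $X_D$ of \cite{MR4753310}, exactly the $\delta$-hyperbolic space already used in Lemma \ref{X_D-kernel}.

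The bridge between the two boundaries is the $G$-equivariant identification of $\partial X_D$ with a dense subset of $\partial_v X$ (\cite[Theorem L]{MR4753310}), which I would use to show
$$ \ker(G \curvearrowright \partial_v X) = \ker(G \curvearrowright \partial X_D). $$
One inclusion is immediate, since an element fixing all of $\partial_v X$ fixes the subset $\partial X_D$ pointwise. For the reverse inclusion, if $g$ fixes $\partial X_D$ pointwise then, by equivariance of the identification, $g$ fixes a dense subset of $\partial_v X$; as $g$ acts on $\partial_v X$ as a homeomorphism for the Hausdorff cone topology, its fixed-point set is closed and hence equals all of $\partial_v X$. I expect this step to be the main obstacle: the argument is clean only once one has pinned down that the curtain-model identification $\partial X_D \hookrightarrow \partial_v X$ is simultaneously $G$-equivariant and a topological embedding with dense image, which is precisely the content I would be importing from \cite{MR4753310}.

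It then remains to identify $\ker(G \curvearrowright \partial X_D)$ with $K(G)$. Lemma \ref{X_D-kernel} already shows this kernel is finite, and being a finite normal subgroup it lies inside the maximal finite normal subgroup $K(G)$. For the reverse inclusion I would argue, as in Lemma \ref{Lem_KnownResults}, that every finite normal subgroup acts trivially on $\partial X_D$: the action $G \curvearrowright X_D$ is non-elementary and cobounded (the latter inherited from cocompactness of $G \curvearrowright X$, since $X_D$ has the same underlying set), so loxodromic fixed points are dense in $\partial X_D$ by Corollary $7.4.3$ in \cite{MR3558533}. For a loxodromic $h$ and any $n \in K(G)$, normality gives $h^{-k} n h^{k} \in K(G)$, so $n$ displaces the orbit $h^{k}\cdot o$ by at most $\max_{m\in K(G)} d(o,m\cdot o)$, a uniform bound; letting $k \to \pm\infty$ forces $n \cdot h^{\pm} = h^{\pm}$ in the hyperbolic space $X_D$. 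Thus $n$ fixes a dense subset of $\partial X_D$ and, acting continuously, fixes all of it, giving $K(G) \subseteq \ker(G \curvearrowright \partial X_D)$. Assembling the three displays yields the full chain of equalities asserted in the corollary.
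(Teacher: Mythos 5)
Your proof is correct, but it is organized around a different decomposition than the paper's. The paper never proves the two equalities you target; instead it closes a single cycle of inclusions
$$ \FC(G) \subset \ker \left( G \curvearrowright \partial_{v} X \right ) \subset \ker \left( G \curvearrowright \partial X_D \right ) \subset K(G), $$
which collapses to equalities because $\FC(G)=K(G)$ is already known from Corollary \ref{AHG:4-equality}. Its only genuinely new geometric input is the first inclusion, proved directly: for $g \in \FC(G)$, cocompactness of $G \curvearrowright X$ together with boundedness of the conjugacy class $\left \{ \| x^{-1} g x \| : x \in G \right \}$ gives a uniform bound on $d_X\left( \gamma(t), g \cdot \gamma(t) \right)$ along any geodesic ray $\gamma$, so $g$ fixes $[\gamma] \in \partial_v X$. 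The second and third inclusions of the cycle are exactly your ``immediate'' direction (equivariant injectivity of $\partial X_D \hookrightarrow \partial_v X$) and your combination of Lemma \ref{X_D-kernel} with maximality of $K(G)$. By contrast, you prove both $\ker(G \curvearrowright \partial_v X) = \ker(G \curvearrowright \partial X_D)$ and $\ker(G \curvearrowright \partial X_D) = K(G)$ head-on, which requires two inclusions the paper only obtains indirectly: $\ker(G \curvearrowright \partial X_D) \subset \ker(G \curvearrowright \partial_v X)$ via density of $\partial X_D$ plus closedness of fixed-point sets, and $K(G) \subset \ker(G \curvearrowright \partial X_D)$ via the bounded-displacement argument along loxodromic orbits. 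Your route buys the cleaner standalone statement that both boundary kernels equal $K(G)$ without routing through $\FC(G)$, but it leans harder on the curtain model: you need the map of \cite[Theorem L]{MR4753310} to be a $G$-equivariant topological embedding with dense image (the paper's corollary proof uses only equivariant injectivity; density enters only inside Lemma \ref{X_D-kernel}), and you need the dynamics of $G \curvearrowright X_D$ to make loxodromic fixed points dense in all of $\partial X_D$, not merely in a possibly proper limit set.

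Two points to shore up in your version. First, ``cobounded because $X_D$ has the same underlying set as $X$'' is not by itself an argument: a set bounded for $d$ need not a priori be bounded for $D$, so you must invoke the comparison between the two metrics (the identity $(X,d) \to (X,D)$ is coarsely Lipschitz in \cite{MR4753310}), which is what actually transfers coboundedness. Second, non-elementarity of $G \curvearrowright X_D$ needs justification before you may cite Corollary 7.4.3 of \cite{MR3558533}; this can be assembled as in the proof of Lemma \ref{X_D-kernel} (a rank-one element is loxodromic, indeed WPD, on $X_D$, $G$ is not virtually cyclic, and $\partial X_D$ is infinite). With those citations inserted, your argument is complete.
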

\begin{proof}
 By Proposition \ref{Sublinearly-Rank-one-Acylindrical}, $G$ is acylindrically hyperbolic so we already have
 $$ \ker \left ( G \curvearrowright \Cone_{\omega}(G,d_n) \right ) = \FC(G) = K(G) = \mathcal{A}(G). $$
 To conclude the result, it suffices to show that $$\FC (G) \subset \ker \left( G \curvearrowright \partial_{v} X \right ) \textnormal{ and } \ker \left( G \curvearrowright \partial_{v} X \right ) \subset \ker \left( G \curvearrowright \partial X_D \right ) \subset K(G)$$
 where $X_D$ is the curtain model of $X$.

 We show the first inclusion. 
 Choose $g \in \FC (G)$ and $[\gamma] \in \partial_{v} X$. Since $[\gamma]$ is a geodesic ray and $G$ acts cocompactly on $X$, there exists a compact set $K$ and a sequence $g_n$ in $G$ such that $$ \gamma \subset \bigcup_{k=1}^\infty g_k \cdot K.$$
 We assume that we choose $g_k$ as minimal, that is, $g_k \cdot K \cap \gamma \neq \emptyset$ for all $k$. Also, we can assume that $\gamma(Ck) \in g_k \cdot K$ for some $C>0$.
 Then $g \cdot \gamma(Ck) \in g g_k \cdot K$ so for any $k$, we have
 $$ d_X(\gamma(Ck) , g \cdot \gamma(Ck)) \leq \diam \left( g_k \cdot K \cup gg_k \cdot K \right) = \diam \left( K \cup g_k^{-1}gg_k \cdot K \right) < ML$$
 where $$\diam(K) < M \textnormal{ and } L := \max \{ ||x^{-1}gx|| : x \in G \}.$$ Recall that $L$ exists since $g \in \FC(G)$.
 Thus $[\gamma] = g \cdot [\gamma]$ and $ g \in \ker \left( G \curvearrowright \partial_{v} X \right )$.

 Now consider the second inclusion. By Lemma \ref{X_D-kernel},  the only remaining thing is to prove
 $$\ker \left( G \curvearrowright \partial_{v} X \right ) \subset \ker \left( G \curvearrowright \partial X_D \right ).$$
Choose $g \not \in \ker \left( G \curvearrowright \partial X_D \right)$, then there exists $[\gamma] \in \partial X_D$ such that $[\gamma] \neq g \cdot [\gamma]$. From \cite[Theorem L]{MR4753310}, there exists an embedding $f:\partial X_D \to \partial_v X$ and 
it directly implies $g \not \in \ker \left( G \curvearrowright \partial_{v} X \right )$.
 We complete the proof. 
\end{proof}

 Now we show that the kernel of the action on the other boundaries, sublinearly Morse boundary and Morse boundary are the same as the kernel of the action on the visual boundary.
 
\begin{cor} \label{CAT:boundaries}
Suppose that a finitely generated group $G$ acts geometrically on proper $\CAT(0)$ space $X$. Assume that $G$ contains a rank-one isometry and $G$ is not virtually cyclic.
 The following three kernels are the same for any sublinear function $\kappa$.
 $$ \ker \left( G \curvearrowright \partial_{v} X \right ) = 
 \ker \left( G \curvearrowright \partial_{\kappa} X \right ) = 
 \ker \left( G \curvearrowright \partial_{*} X \right ).$$
\end{cor}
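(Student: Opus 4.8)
The plan is to sandwich the three kernels between $\ker(G\curvearrowright\partial_v X)$ and itself, using the set inclusions of the three boundaries in one direction and an acylindrical--hyperbolicity argument in the other, and then to identify the common value with $K(G)$ via Corollary \ref{CAT:4-equality}.

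First I would record that, as $G$-invariant subsets of $\partial_v X$, one has $\partial_* X\subseteq\partial_\kappa X\subseteq\partial_v X$ for every sublinear $\kappa$. Indeed, since $\kappa\geq 1$ a bounded neighbourhood of a geodesic is contained in the corresponding $(\kappa,a)$-neighbourhood, so every contracting (Morse) ray is $\kappa$-Morse; and by \cite[Lemma 3.5]{MR4423805} a $\kappa$-Morse ray determines a single point of $\partial_v X$, so $\partial_\kappa X$ is naturally a subset of $\partial_v X$. The $G$-action on each of the three boundaries is the restriction of the action on $\partial_v X$. Since an element fixing every point of a set fixes every point of each invariant subset, these inclusions reverse at the level of kernels:
\[ \ker(G\curvearrowright\partial_v X)\subseteq\ker(G\curvearrowright\partial_\kappa X)\subseteq\ker(G\curvearrowright\partial_* X). \]

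It then remains to prove the reverse inclusion $\ker(G\curvearrowright\partial_* X)\subseteq\ker(G\curvearrowright\partial_v X)$, after which Corollary \ref{CAT:4-equality} identifies all three kernels with $K(G)$. Write $N:=\ker(G\curvearrowright\partial_* X)$, a normal subgroup of $G$; it suffices to show $N$ is finite, since then $N\subseteq K(G)=\ker(G\curvearrowright\partial_v X)$. Assume $N$ is infinite. By Proposition \ref{Sublinearly-Rank-one-Acylindrical} $G$ is acylindrically hyperbolic, so Lemma \ref{infinite_normalsubgroup_acyl} makes $N$ acylindrically hyperbolic as well; exactly as in the proof of Lemma \ref{X_D-kernel}, Theorem \ref{Equi_Defs_AH} together with \cite[Theorem F]{MR4753310} gives a WPD element of $N$ for the action on the curtain model $X_D$, and \cite[Theorem C]{MR4753310} shows that such a loxodromic element of $X_D$ is a rank-one isometry $g$ of $X$.

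The contradiction comes from the dynamics of $g$ on $\partial_* X$, and this is the step I expect to be the main obstacle. The axis of a rank-one isometry is contracting, so its two endpoints lie in $\partial_* X$ and $g$ acts there with north--south dynamics, fixing no point other than those two endpoints. As $N$ is acylindrically hyperbolic (hence not virtually cyclic) it contains infinitely many independent rank-one isometries, so $\partial_* X$ has more than two points; therefore $g$ moves some point of $\partial_* X$, contradicting $g\in N=\ker(G\curvearrowright\partial_* X)$. Making this rigorous requires care: one must confirm both that the relevant fixed and moved points genuinely lie in the Morse boundary and that $g$ acts with honest north--south dynamics there. I would settle both by transporting the north--south dynamics of $g$ on the hyperbolic space $X_D$ through the equivariant embedding $\partial X_D\hookrightarrow\partial_v X$ of \cite[Theorem L]{MR4753310}, whose image consists of contracting directions and hence lies in $\partial_* X$; alternatively one can cite directly the known north--south dynamics of rank-one isometries on the contracting boundary. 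Either way $N$ must be finite, which closes the chain and completes the proof.
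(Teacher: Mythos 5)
Your proposal is correct in substance, but it takes a genuinely different route from the paper for the key step. Both arguments use the inclusions $\partial_* X \subseteq \partial_{\kappa} X \subseteq \partial_v X$ to get
$$\ker \left( G \curvearrowright \partial_{v} X \right ) \subseteq \ker \left( G \curvearrowright \partial_{\kappa} X \right ) \subseteq \ker \left( G \curvearrowright \partial_{*} X \right ),$$
and the whole content is the reverse inclusion. The paper handles it purely topologically: by Murray \cite{MR3947271}, the Morse boundary is dense in $\partial_v X$ in the cone topology (this is exactly where ``geometric action'' and ``not virtually cyclic'' enter), so any $g\in G$, acting as a homeomorphism of $\partial_v X$, that fixes $\partial_* X$ pointwise must fix its closure $\partial_v X$ pointwise; the $\kappa$-Morse kernel is then sandwiched. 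You instead show that $N:=\ker(G\curvearrowright \partial_* X)$ is finite --- by rerunning the proof of Lemma \ref{X_D-kernel} to extract a rank-one isometry $g\in N$ if $N$ were infinite, and ruling this out by north--south dynamics --- and then conclude $N\subseteq K(G)=\ker(G\curvearrowright\partial_v X)$ from Corollary \ref{CAT:4-equality}. Your route avoids invoking the density theorem as such, but it costs you two extra inputs: the count $|\partial_* X|\geq 3$ (your justification via independent rank-one isometries needs the standard facts that, under a cocompact action, axes of rank-one isometries are contracting, so their endpoints lie in $\partial_* X$, and that the stabilizer of such an endpoint pair is virtually cyclic), and honest north--south dynamics somewhere containing $\partial_* X$.

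On that last point, one sub-claim in your plan is wrong and should be dropped: the image of the embedding $\partial X_D \hookrightarrow \partial_v X$ does \emph{not} consist of contracting directions. Theorem L of \cite{MR4753310} gives a continuous injection with \emph{dense} image (this is how the paper uses it in Lemma \ref{X_D-kernel}), and the curtain model boundary already accounts for the $\kappa$-Morse directions for every sublinear $\kappa$, so it is strictly larger than $\partial_* X$ whenever $\partial_{\kappa} X \neq \partial_* X$; moreover, transporting north--south dynamics through a continuous injection whose image is neither compact nor closed is delicate in any case. Your fallback, however, is exactly right and suffices: rank-one isometries of a proper CAT(0) space act with north--south dynamics on $\partial_v X$ (Hamenst\"adt), and the corresponding statement on the contracting boundary is proved in the very reference \cite{MR3947271} that the paper cites for density. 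With that citation in place of the curtain-model transport, your argument closes: any point of $\partial_* X$ outside the two axis endpoints of $g$ would be moved by $g\in N$, a contradiction, so $N$ is finite and the chain of equalities follows.
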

\begin{proof}
 First, the inclusions 
 \begin{align*} \label{inclusions}
 \partial_* X \subset \partial_{\kappa} X \subset \partial_v X
 \tag{$\star$}
\end{align*}
 follow from Proposition 4.10 in \cite{MR4423805} and Lemma 2.13 in \cite{qinggeometry}.

 Now consider the Morse boundary $\partial_* X$ with the cone topology.
Murray proved that the Morse boundary is dense in the visual boundary with respect
to the cone topology whenever $G$ is not virtually cyclic and $X$ admits a proper cocompact group action \cite{MR3947271}.
Our setting satisfies all the conditions so the Morse boundary is a dense subset in the visual boundary.
From the density, we obtain that the two kernels $$ \ker \left( G \curvearrowright \partial_{v} X \right ), \ 
 \ker \left( G \curvearrowright \partial_{*} X \right )$$ are the same.

 Then the kernel $\ker \left( G \curvearrowright \partial_{\kappa} X \right )$ should be the same as the other two kernels. The result follows from the inclusion \eqref{inclusions}.
\end{proof}

Recall that the sublinearly Morse boundary depends on the choice of a sublinear function $\kappa$ but Corollary \ref{CAT:4-equality} tells us that the kernel is invariant under the choice of $\kappa$.

\begin{cor}
Suppose that a finitely generated group $G$ acts geometrically on proper $\CAT(0)$ space $X$. Also, assume that $G$ contains a rank-one isometry and that $G$ is not virtually cyclic.
Then the kernel $\ker \left( G \curvearrowright \partial_{\kappa} X \right )$ is invariant under the choice of a sublinear function $\kappa$.
\end{cor}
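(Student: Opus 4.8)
The plan is to observe that this statement is an immediate consequence of the two preceding corollaries and requires no new machinery. The hypotheses here — that $G$ acts geometrically on a proper $\CAT(0)$ space $X$, contains a rank-one isometry, and is not virtually cyclic — are precisely the standing assumptions of Corollary \ref{CAT:boundaries}, so I would begin by invoking that result.

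First I would fix an arbitrary sublinear function $\kappa$ satisfying the monotonicity, concavity, and sublinearity conditions required by the definition of $\partial_{\kappa} X$. By Corollary \ref{CAT:boundaries}, we have the equality
$$\ker\left(G \curvearrowright \partial_{\kappa} X\right) = \ker\left(G \curvearrowright \partial_{v} X\right).$$
The crucial point is that the right-hand side does not involve $\kappa$ at all: it is the kernel of the action on the visual boundary, an object intrinsic to $X$ and independent of any choice of sublinear gauge.

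Next I would conclude by comparing two arbitrary choices. Given any two sublinear functions $\kappa_1$ and $\kappa_2$ (each meeting the conditions above), applying the displayed equality to each of them yields
$$\ker\left(G \curvearrowright \partial_{\kappa_1} X\right) = \ker\left(G \curvearrowright \partial_{v} X\right) = \ker\left(G \curvearrowright \partial_{\kappa_2} X\right),$$
so the two kernels coincide. Since $\kappa_1$ and $\kappa_2$ were arbitrary, the kernel $\ker\left(G \curvearrowright \partial_{\kappa} X\right)$ is the same subgroup for every admissible $\kappa$; indeed, by Corollary \ref{CAT:4-equality} this common subgroup is exactly $K(G)$.

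There is no genuine obstacle here, since all of the substantive work — establishing acylindrical hyperbolicity through Proposition \ref{Sublinearly-Rank-one-Acylindrical}, the density of the Morse boundary in the visual boundary, and the inclusions $\partial_* X \subset \partial_{\kappa} X \subset \partial_v X$ of \eqref{inclusions} — has already been carried out in the proof of Corollary \ref{CAT:boundaries}. The only thing left to record is the elementary logical point that an equality whose right-hand side is independent of a parameter forces the left-hand side to be independent of that parameter as well, which is nothing more than transitivity of equality.
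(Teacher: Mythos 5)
Your proposal is correct and matches the paper's own (implicit) argument: the paper states this corollary without a separate proof precisely because, as you observe, Corollary \ref{CAT:boundaries} identifies $\ker\left(G \curvearrowright \partial_{\kappa} X\right)$ with $\ker\left(G \curvearrowright \partial_{v} X\right)$, a subgroup independent of $\kappa$ (and equal to $K(G)$ by Corollary \ref{CAT:4-equality}). Nothing further is needed.
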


\begin{rmk}
 One may be wondering if the following holds: if a $\CAT(0)$ group $G$ satisfies
\begin{align*}
 \ker \left ( G \curvearrowright \Cone_{\omega}(G,d_n) \right ) &= \ker \left( G \curvearrowright \partial_{v} X \right ) = K(G) = \FC(G) =  \mathcal{A} (G), \\
 &= \ker \left( G \curvearrowright \partial_{\kappa} X \right )  \\ 
 &= \ker \left( G \curvearrowright \partial_{*} X \right )
\end{align*}
then $G$ contains a rank-one isometry. It is false and counterexamples are easily established by using the direct product.
In particular, let $G = F_2 \times F_2$, then the assumption satisfies (all these subgroups are trivial) but $G$ does not contain a rank-one isometry.
\end{rmk}

 As an application, we deduce the result for Coxeter groups. 
 It is worth noting that all Coxeter groups are $\CAT(0)$ groups due to Davis complex \cite{MR2360474}. 
 For any finitely generated Coxeter group $W$, we can always decompose it as 
 $$W = W_F \times W_A \times W_{L_1} \times \cdots \times W_{L_n}$$ where $W_F$ is a finite Coxeter group, $W_A$ is a virtually abelian group and $W_{L_i}$ are irreducible non-spherical and non-affine Coxeter groups. In the decomposition, possibly each component may be trivial (that is, we allow it to be $W_A=1$ or $W_{L_1}=1$ with $n=1$).
 Furthermore, it is known which Coxeter groups contain a rank-one isometry so we can prove the following Coxeter group version application.

\begin{cor} \label{Application:Coxeter}
 Let $W$ be a finitely generated Coxeter group and $\Sigma$ be the Davis complex of $W$. Decompose $W$ as $$W=W_F \times W_A \times W_{L_1} \times \cdots \times W_{L_n}.$$
 If $W_A$ is trivial and $n=1$, then the following subgroups are all the same for any sublinear function $\kappa$.
\begin{align*}
 \ker \left ( W \curvearrowright \Cone_{\omega}(W,d_n) \right ) &= \ker \left( W \curvearrowright \partial_{v} \Sigma \right ) = K(W) = \FC(W) = \mathcal{A} (W) = W_F. \\
 &= \ker \left( W \curvearrowright \partial_{\kappa} \Sigma \right )  \\ 
 &= \ker \left( W \curvearrowright \partial_{*} \Sigma \right )
\end{align*}
 Moreover, we can replace $\Sigma$ with any proper $\CAT(0)$ space $X$ on which $W$ acts geometrically. 
\end{cor}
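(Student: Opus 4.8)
The plan is to read off the statement from Corollary \ref{CAT:result} once we verify that $W$ lies in its scope, and then to identify the common value of all the listed subgroups with the finite factor $W_F$. Throughout I would use that the Davis complex of a (reducible) direct product of Coxeter groups splits as the $\CAT(0)$ product of the Davis complexes of its factors, so that $\Sigma = \Sigma_F \times \Sigma_{L_1}$, where $\Sigma_F$ is the \emph{compact} Davis complex of the finite group $W_F$ and $\Sigma_{L_1}$ is the Davis complex of the irreducible piece $W_{L_1}$. Since $W = W_F \times W_{L_1}$ acts geometrically on this product, $\Sigma$ is a legitimate proper $\CAT(0)$ space for $W$.

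First I would check the two hypotheses of Corollary \ref{CAT:result}: that $W$ contains a rank-one isometry and is not virtually cyclic. Because $W_{L_1}$ is irreducible, non-spherical (hence infinite) and non-affine, the known classification of rank-one isometries for Coxeter groups provides a rank-one isometry $g \in W_{L_1}$ for the action on $\Sigma_{L_1}$. I would then promote $g$ to a rank-one isometry of $\Sigma = \Sigma_F \times \Sigma_{L_1}$: since $\Sigma_F$ is bounded, an axis of $g$ in the $\Sigma_{L_1}$-factor cannot bound a flat half-plane in the product, as any such half-plane would have to project either to a bounded set in $\Sigma_F$ or to a flat half-plane bounded by the axis in $\Sigma_{L_1}$, the latter contradicting rank-one of $g$. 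For non-virtual-cyclicity I use that the only infinite virtually cyclic Coxeter group is the infinite dihedral group, which is affine; hence the non-affine $W_{L_1}$, and therefore $W \supseteq W_{L_1}$, is not virtually cyclic. By Proposition \ref{Sublinearly-Rank-one-Acylindrical} this makes $W$ acylindrically hyperbolic, and Corollary \ref{CAT:result} then supplies all the displayed equalities among $\ker(W \curvearrowright \Cone_{\omega}(W,d_n))$, $\ker(W \curvearrowright \partial_v \Sigma)$, $\ker(W \curvearrowright \partial_\kappa \Sigma)$, $\ker(W \curvearrowright \partial_* \Sigma)$, $K(W)$, $\FC(W)$ and $\mathcal{A}(W)$, together with the invariance under the choice of $\kappa$.

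It remains to identify the common value with $W_F$. For any direct product one has $K(A \times B) = K(A) \times K(B)$, since projecting a finite normal subgroup to each factor yields finite normal subgroups of the factors; applied to $W = W_F \times W_{L_1}$ this gives $K(W) = K(W_F) \times K(W_{L_1}) = W_F \times K(W_{L_1})$. Thus $K(W) = W_F$ reduces to the claim $K(W_{L_1}) = 1$, i.e.\ that the irreducible, infinite, non-affine Coxeter group $W_{L_1}$ has trivial finite radical. This is the Coxeter-specific structural input I would invoke; it can also be extracted from acylindrical hyperbolicity, since by Corollary \ref{CAT:4-equality} we have $K(W_{L_1}) = \ker(W_{L_1} \curvearrowright \partial_v \Sigma_{L_1})$, and the north--south dynamics of $g$ force any finite normal subgroup to lie in the (virtually cyclic) elementary closure of $g$, which, intersected over two independent rank-one isometries, is trivial. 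For the final ``moreover'' clause I note that $\Cone_{\omega}(W,d_n)$, $K(W)$, $\FC(W)$ and $\mathcal{A}(W)$ are intrinsic to $W$ and independent of the chosen space, while Corollary \ref{CAT:result} applies verbatim to any proper $\CAT(0)$ space $X$ on which $W$ acts geometrically; hence $\Sigma$ may be replaced by any such $X$, with the boundary kernels again all equal to $W_F$.

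I expect the two genuinely non-formal points to be the transfer of the rank-one property from $W_{L_1}$ to the product $W$ (handled above via the boundedness of $\Sigma_F$) and, more substantially, the verification that $K(W_{L_1}) = 1$, which is precisely where irreducibility and non-affineness of $W_{L_1}$ are used; everything else is a direct invocation of Proposition \ref{Sublinearly-Rank-one-Acylindrical} and Corollary \ref{CAT:result} together with the elementary product formula for $K$.
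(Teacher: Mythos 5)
Your main line is sound and shares the paper's skeleton: verify the hypotheses of Corollary \ref{CAT:result} for $W \curvearrowright \Sigma$, then identify the common subgroup with $W_F$ via $K(W) = K(W_F) \times K(W_{L_1}) = W_F \times K(W_{L_1})$ and the triviality of $K(W_{L_1})$. Where you genuinely differ is in how the rank-one hypothesis is verified. The paper first shows that $W$ itself has no finite-index subgroup splitting as a direct product of two infinite subgroups (via \cite[Theorem 4.1(2)]{MR2333366}) and then feeds the whole group $W$ into the characterization of \cite[Theorem 1.1]{MR2918313}; you instead apply that characterization only to the irreducible factor $W_{L_1}$ and promote the rank-one isometry to the product $\Sigma = \Sigma_F \times \Sigma_{L_1}$ by hand. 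Your promotion argument is correct and can be tightened: each vertical geodesic ray of a flat half-plane bounded by $\{x_0\} \times l'$ is a geodesic ray in the product, its $\Sigma_F$-component is a geodesic ray in a compact space and hence constant, so the whole half-plane lies in the slice $\{x_0\} \times \Sigma_{L_1}$ and yields a genuine flat half-plane along the axis of $g$ in $\Sigma_{L_1}$, a contradiction. This route also spares you Lemma 3.9 of \cite{MR3368093}, since you obtain rank-one (hence, with non-virtual-cyclicity, acylindrical hyperbolicity by Proposition \ref{Sublinearly-Rank-one-Acylindrical}) for $W$ itself rather than only for $W_{L_1}$.

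Two points, however, do not hold up. First, your aside that $K(W_{L_1}) = 1$ ``can also be extracted from acylindrical hyperbolicity'' by intersecting the elementary closures of two independent rank-one isometries is circular: every finite normal subgroup of $W_{L_1}$ is contained in the elementary closure of \emph{every} loxodromic element, so that intersection always contains $K(W_{L_1})$ and is trivial exactly when $K(W_{L_1})$ is --- which is the statement being proved. The Coxeter-specific input (\cite[Proposition 4.3]{MR2333366}, which both you and the paper invoke) is therefore not optional. Second, and more seriously, the ``moreover'' clause is not actually proved. Containing a rank-one isometry is a property of an \emph{action}, not of the group, so Corollary \ref{CAT:result} does not ``apply verbatim'' to an arbitrary proper $\CAT(0)$ space $X$ with a geometric $W$-action: one must first exhibit a rank-one element for $W \curvearrowright X$, and your product argument was specific to $\Sigma$ (a general $X$ carries no a priori splitting). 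This is exactly why the paper cites \cite[Theorem 1.1]{MR2918313} a second time: that theorem gives that $W \curvearrowright \Sigma$ contains a rank-one isometry if and only if $W \curvearrowright X$ does, for any proper $\CAT(0)$ space $X$ on which $W$ acts geometrically. Without this transfer (or an intrinsic Morse/contracting characterization playing the same role), the final sentence of the corollary remains unjustified in your write-up.
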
 
\begin{proof}
 Recall that $W$ acts geometrically on its Davis complex $\Sigma$. For simplicity, put $W_{L_1}=W_L$ so by assumption, we have $W = W_F \times W_L$.
Then $W_L$ does not contain a finite-index subgroup that splits as a direct product of two infinite subgroups (\cite[Theorem 4.1(2)]{MR2333366} and that paper uses the terminology \emph{strongly indecomposable}). Since $W_F$ is finite, $W$ also does not contain a finite-index subgroup that splits as a direct product of two infinite subgroups.

 From Theorem 1.1 in \cite{MR2918313}, every irreducible Coxeter group either is finite, affine, or contains a rank-one isometry.
 Thus $W_L$ contains a rank-one isometry. Since $W_L$ is not virtually cyclic, $W_L$ is acylindrically hyperbolic and so is $W$ due to Lemma 3.9 in \cite{MR3368093}.
 Since $K(W_L)=1$ \cite[Proposition 4.3, Assertion 2]{MR2333366},
 we have $K(W)=W_F$ hence we complete the proof for the case of $\Sigma$, the Davis complex.

 For a general proper $\CAT(0)$ space $X$, the result follows immediately from the fact that $W \curvearrowright \Sigma$ contains a rank-one isometry if and only if $W \curvearrowright X$ contains a rank-one isometry \cite[Theorem 1.1]{MR2918313}.
\end{proof}


\section{The kernel of Paulin's Construction} \label{sec:Paulin}
 In this section, we concentrate on another group action on its asymptotic cone. This action was first suggested by Paulin so nowadays this new action is called \emph{Paulin's construction} \cite{MR1105339}.
One of the advantages of using Paulin's construction is to obtain a group action without a global fixed point. 
Recall that the natural action $G \curvearrowright \Cone_{\omega}(G,d_n)$ has a global fixed point $\{ e \}$ where $e$ is the identity of $G$. 
But Paulin's construction has no global fixed point.
We denote it by $$G \overset{p}{\curvearrowright} \Cone_{\omega}(G) $$
and we remove $d_n$ from the notation since we can not choose the sequence $d_n$ freely.
The main idea is to use infinitely many automorphisms of $G$ so the condition $|\Out(G)|=\infty$ is required.
With this condition, we define a new action of $G$ on an asymptotic cone by
$$ g \mapsto \left( \{ x_n \} \mapsto \{ \phi_n(g) x_n \} \right)$$
with a sequence $d_n$ defined by $$ d_n := \inf_{g \in G} \max_{s \in S} d_S \left( g,\phi_n(s)g \right). $$
Recall that, when $\phi_n(g)=g$ it is the canonical action on an asymptotic cone. 
Paulin proved that if $|\Out(G)|=\infty$, then we can choose infinitely many automorphisms $\phi_n \in \Aut(G)$ such that the new action does not have a global fixed point.
In fact, Paulin's construction has other advantages. For more details, we refer to Paulin's paper \cite{MR1105339}.

 When Paulin designed the new action, he assumed that $G$ was hyperbolic. The reason is probably to construct a group action on a real tree without a global fixed point (more precisely, a small action).
 However, the main idea can be extended to a general group with the assumption $|\Out(G)|=\infty$.
 Indeed, A. Genevois confirmed that for any group $G$ with $|\Out(G)|=\infty$, $G$ can act on $\Cone_{\omega}(G,d_n)$ without a global fixed point \cite[Section 5.3]{genevois2018automorphisms}.
 
 The main purpose of this section is to show that if $G$ is finitely generated acylindrically hyperbolic, then the kernel of Paulin's construction, $ \ker \left ( G \overset{p}{\curvearrowright} \Cone_{\omega}(G) \right )$ is the same as the kernel of the natural action on asymptotic cone, hence all the subgroups that appeared in Corollary \ref{AHG:4-equality}.
 We start with the following elementary observations.
 
\begin{lem} \label{Kernel_Characteristic}
 The kernel $\ker \left ( G \curvearrowright \Cone_{\omega}(G,d_n) \right )$ is a characteristic subgroup.
\end{lem}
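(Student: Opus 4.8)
The plan is to show directly that for every $\psi \in \Aut(G)$ one has $\psi\bigl(\ker(G \curvearrowright \Cone_\omega(G,d_n))\bigr) = \ker(G \curvearrowright \Cone_\omega(G,d_n))$. Since applying the argument to $\psi^{-1}$ yields the reverse inclusion, it suffices to establish $\psi(\ker) \subseteq \ker$ for an arbitrary automorphism $\psi$. The first observation I would record is that every automorphism of a finitely generated group is bi-Lipschitz for the word metric: because $S$ is finite, both $\psi(S)$ and $\psi^{-1}(S)$ are finite, so with $C := \max_{s \in S} \max\{ ||\psi(s)||_S, ||\psi^{-1}(s)||_S \}$ one gets $\frac{1}{C}||g||_S \le ||\psi(g)||_S \le C\,||g||_S$ for all $g \in G$.

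The core of the argument then uses the explicit description of the kernel recorded above, namely that $g$ lies in the kernel if and only if $\lim_\omega \frac{||x_n^{-1} g x_n||_S}{d_n} = 0$ for every admissible sequence $x_n$ (those with $\lim_\omega \frac{||x_n||_S}{d_n} < \infty$). Fix $g \in \ker$ and let $y_n$ be any admissible sequence. I would set $x_n := \psi^{-1}(y_n)$; by the bi-Lipschitz bound $\lim_\omega \frac{||x_n||_S}{d_n} \le C \lim_\omega \frac{||y_n||_S}{d_n} < \infty$, so $x_n$ is admissible, and the hypothesis $g \in \ker$ gives $\lim_\omega \frac{||x_n^{-1} g x_n||_S}{d_n} = 0$. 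Now the homomorphism property yields the identity $\psi(x_n^{-1} g x_n) = \psi(x_n)^{-1}\psi(g)\psi(x_n) = y_n^{-1}\psi(g)\,y_n$, and applying the Lipschitz bound once more gives $||y_n^{-1}\psi(g)\,y_n||_S \le C\,||x_n^{-1} g x_n||_S$. Dividing by $d_n$ and passing to the ultralimit forces $\lim_\omega \frac{||y_n^{-1}\psi(g)\,y_n||_S}{d_n} = 0$. As $y_n$ was an arbitrary admissible sequence, $\psi(g) \in \ker$, which is the desired inclusion.

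I expect no serious obstacle here: the statement is genuinely elementary once one notices that automorphisms act bi-Lipschitzly and that the defining condition for the kernel is insensitive to multiplicative constants, since we only ever compare the displacement ratio to $0$. The only points requiring a line of care are verifying that $x_n = \psi^{-1}(y_n)$ remains admissible and that the conjugation identity transports the kernel condition correctly; both follow at once from the bi-Lipschitz estimate and from $\psi$ being a homomorphism. This lemma is exactly what is needed to feed Paulin's twisted action into the present framework, since that action is built from (outer) automorphisms, and characteristicity of the kernel guarantees it is unaffected by the choice of automorphism representatives.
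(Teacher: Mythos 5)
Your proof is correct and follows essentially the same route as the paper's: both rest on the fact that an automorphism is bi-Lipschitz for the word metric (since $\phi(S)$ is again a finite generating set) combined with the displacement description of the kernel. If anything, your version is slightly more careful than the paper's, since you explicitly pull back an arbitrary admissible sequence $y_n$ via $\psi^{-1}$ to verify the kernel condition for $\psi(g)$, a quantifier step the paper leaves implicit.
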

\begin{proof}
 Suppose that an automorphism $\phi$ of $G$, a finite generating set $S$ for $G$, and a sequence $d_n$ are given. Then $$\phi(S)=\{ \phi(s) : s \in S \}$$ is also a finite generating set for $G$. Thus there exist $C, \ D > 0 $ satisfying 
 $$ C < \frac{||\phi(g)||_S}{||g||_S} < D$$ for every $g \in G$.
 Let $g \in \ker \left ( G \overset{p}{\curvearrowright} \Cone_{\omega}(G,d_n) \right )$. Then for any sequence $\{ x_k\}$ in $G$ such that $ \frac{||x_k||_S}{d_k} < \infty$, we have $$\lim_{\omega} \frac{||x_k^{-1}gx_k||_S}{d_k}=0.$$
 It implies
 $$\lim_{\omega} \frac{||\phi(x_k)^{-1} \phi(g) \phi(x_k)||_S}{d_k} = \lim_{\omega} \frac{||\phi(x_k^{-1}gx_k)||_S}{d_k} = 0.$$
 Thus we have $\phi(g) \in \ker \left ( G \curvearrowright \Cone_{\omega}(G,d_n) \right )$. We complete the proof.
\end{proof}

\begin{lem} \label{two_points_stabilizer}
 Let $G$ be a group and suppose that $G$ admits acylindrical and non-elementary action on a $\delta$-hyperbolic space $X$. Then for any two independent loxodromic elements $h_1,h_2$, the pointwise stabilizer subgroup $\Stab([h_1^{\pm n}],[h_2^{\pm n}])$ is finite. 
\end{lem}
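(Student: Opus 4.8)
The plan is to exhibit two points lying far apart along the axis of $h_1$ that every element of the pointwise stabilizer $K := \Stab([h_1^{\pm n}],[h_2^{\pm n}])$ moves only a uniformly bounded amount, and then to invoke acylindricity directly. Write $\xi_1^{\pm}=[h_1^{\pm n}]$ and $\xi_2^{\pm}=[h_2^{\pm n}]$ for the four boundary fixed points; since $h_1,h_2$ are independent, these four points are pairwise distinct. Fix a basepoint $o\in X$ and let $\alpha=\{h_1^{m}\cdot o : m\in\ZZ\}$ be the quasigeodesic axis of $h_1$, which is a quasi-isometrically embedded line with endpoints $\xi_1^{+}$ and $\xi_1^{-}$ because $h_1$ is loxodromic.

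First I would show that every $g\in K$ coarsely preserves $\alpha$. As $g$ fixes the endpoints $\xi_1^{\pm}$, the image $g\alpha$ is a quasigeodesic line with the same pair of endpoints, so by the Morse lemma (stability of quasigeodesics in a $\delta$-hyperbolic space) one has $g\alpha\subseteq N_{C_1}(\alpha)$ for a constant $C_1$ depending only on $\delta$ and the quasigeodesic constants of $\alpha$, hence uniform over $g\in K$. Composing with a coarse nearest-point projection to $\alpha$, each $g$ therefore induces a coarse isometry of the quasi-line $\alpha$ fixing both of its ends; such a map is coarsely an orientation-preserving translation of $\alpha\cong\RR$ by some amount $\tau_g$.

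The crucial step, and the one I expect to be the main obstacle, is to show that this translation is uniformly bounded, i.e.\ that the extra condition of fixing $\xi_2^{+}$ forces $\tau_g\approx 0$. Let $p\in\alpha$ be a coarse nearest-point projection of $\xi_2^{+}$ to $\alpha$; because $\xi_2^{+}\notin\{\xi_1^{+},\xi_1^{-}\}$ by independence, this projection is a bounded subset of $\alpha$ located near a well-defined point $p$. Since $g$ fixes $\xi_2^{+}$ and carries $\alpha$ into $N_{C_1}(\alpha)$, the point $gp$ is a coarse projection of $\xi_2^{+}$ onto $g\alpha$, which differs from its projection onto $\alpha$ by a uniformly bounded amount; hence $d(p,gp)\le\epsilon_0$ for a constant $\epsilon_0$ independent of $g$. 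A coarse translation of $\alpha$ that moves the single point $p$ by at most $\epsilon_0$ must have $|\tau_g|$ bounded, and combining this with the previous paragraph we conclude that there is a uniform constant $\epsilon_1$ with $d(x,gx)\le\epsilon_1$ for every $g\in K$ and every $x\in\alpha$.

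Finally I would feed this into acylindricity. Applying acylindricity of $G\curvearrowright X$ with $\epsilon=\epsilon_1$ yields constants $R,N>0$; since $\alpha$ is unbounded we may pick $x,y\in\alpha$ with $d(x,y)\ge R$. Every $g\in K$ satisfies $d(x,gx)\le\epsilon_1$ and $d(y,gy)\le\epsilon_1$, so $K$ is contained in the set $\{g\in G: d(x,gx)\le\epsilon_1,\ d(y,gy)\le\epsilon_1\}$, which has at most $N$ elements. Therefore $|K|\le N<\infty$, as desired. Note that independence of $h_1,h_2$ enters exactly once, in guaranteeing that $\xi_2^{+}$ has bounded projection to $\alpha$ and can thereby pin down a finite point and kill the translation; this is precisely what distinguishes the four-point pointwise stabilizer from the (generally infinite) two-point stabilizer $\Stab(\xi_1^{+},\xi_1^{-})$, which contains all powers of $h_1$.
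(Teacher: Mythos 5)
Your proof is correct, and it takes a genuinely different route from the paper's. The paper argues algebraically: it invokes, as a black box, the standard fact that for an acylindrical action the stabilizer $S=\Stab([h_1^{\pm n}])$ of the fixed pair of a loxodromic element is virtually cyclic, picks a finite-index infinite cyclic normal subgroup $M<S$, and notes that if $N:=\Stab([h_1^{\pm n}],[h_2^{\pm n}])$ intersected $M$ nontrivially, then a generator $g$ of $N\cap M$ would satisfy $g^n=h_1^m$ for some $n,m$, hence be loxodromic; but a loxodromic element fixes exactly two boundary points, while every element of $N$ fixes the four distinct points $[h_1^{\pm n}],[h_2^{\pm n}]$ --- a contradiction, so $N\cap M=1$ and $N$ is finite. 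Thus in the paper the independence hypothesis enters through the fixed-point count of loxodromic elements, and acylindricity enters only through the quoted virtual-cyclicity theorem. You instead work directly with the geometry: Morse stability makes each element of the stabilizer a coarse translation of the quasi-axis of $h_1$, the bounded coarse projection of $\xi_2^{+}$ (your only use of independence) forces the translation amount to be uniformly small, and acylindricity applied to two far-apart points on the axis yields finiteness, with an explicit cardinality bound coming from the acylindricity constants. Your approach is self-contained modulo standard coarse-geometric facts (stability of quasigeodesics sharing endpoints at infinity, bounded coarse projections of boundary points onto quasi-axes, stability of projections under bounded perturbation of the target), and these facts do hold without properness of $X$, which matters here since $X$ is an arbitrary $\delta$-hyperbolic space; in effect you reprove, in the special case at hand, the geometric content that the paper outsources to the virtual-cyclicity theorem, which is the only reason the paper's write-up is shorter.
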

\begin{proof}
 Let $N:=\Stab([h_1^{\pm n}],[h_2^{\pm n}])$.  
 Then $N$ is a subgroup of a virtually cyclic group $S := \Stab([h_1^{\pm n}]).$ 
 Then there exists a finite index infinite cyclic normal subgroup $M < S$.

 Consider $N \cap M$. This intersection is a finite index normal subgroup of $N$ and either $N \cap M$ is trivial or isomorphic to $\ZZ$. 
 If $N \cap M=1$, then $N$ is finite so to deduce a contradiction, assume that $N \cap M$ is isomorphic to $\ZZ$.
 Let $g$ be a generator for $N \cap M$.
 Since $M$ is a finite index subgroup of $S$, $g^{n_1} \in S$ for some $n_1$.
 Recall that $h_1 \in S$ so $\left< h_1 \right>$ is a finite index infinite cyclic subgroup of $S$. Thus, $\left( g^{n_1} \right)^{n_2} \in \left< h_1 \right>$ for some $n_2$.
 Thus, $g^n=h_1^m$ for some $n,m$, and it means that $g$ is a loxodromic element.

 Hence, $g \in N$ and $g$ is loxodromic. Since any loxodromic element has exactly two fixed points in $\partial X$, $N$ cannot contain a loxodromic element. It is a contradiction.
Therefore, $N$ is a finite subgroup.
\end{proof}

\begin{lem} \label{finite_stabilizer_kernel}
 Let $G$ be a group and suppose that $G$ admits acylindrical and non-elementary action on a $\delta$-hyperbolic space $X$. Then there exist finitely many loxodromic elements $l_1 , \cdots , l_p \in G$ satisfying
  $$ \ker(G \curvearrowright \partial X) = \Stab([l_1^n],\cdots,[l_p^n]). $$
\end{lem}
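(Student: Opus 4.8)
The plan is to realize the finite kernel as a stabilizer by first trapping it inside a \emph{finite} stabilizer coming from Lemma~\ref{two_points_stabilizer}, and then discarding the superfluous elements one by one. Throughout I write $K := \ker(G \curvearrowright \partial X)$; since $K$ fixes every point of $\partial X$, the inclusion $K \subseteq \Stab([l_1^n], \dots, [l_p^n])$ is automatic for \emph{any} loxodromic $l_1, \dots, l_p$, so only the reverse inclusion requires work. Because $G \curvearrowright X$ is non-elementary and acylindrical, $G$ is acylindrically hyperbolic and contains two independent loxodromic elements $h_1, h_2$; by Lemma~\ref{two_points_stabilizer} the subgroup
$$ F := \Stab([h_1^{n}],[h_1^{-n}],[h_2^{n}],[h_2^{-n}]) $$
is finite.

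The key step is to identify the pointwise stabilizer $P$ of the \emph{entire} set of loxodromic fixed points with $K$. First I would observe that $P$ is normal in $G$: the set of loxodromic fixed points is $G$-invariant, since $g \cdot [l^n] = [(glg^{-1})^n]$, so $G$ permutes it and hence normalizes its pointwise stabilizer. As the four points defining $F$ are themselves loxodromic fixed points, $P \subseteq F$, so $P$ is finite. A finite normal subgroup is contained in the unique maximal finite normal subgroup $K(G)$, and by Lemma~\ref{Lem_KnownResults} we have $K(G) = \ker(G \curvearrowright \partial X) = K$; combined with the trivial inclusion $K \subseteq P$ this yields $P = K$. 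In particular, every $g \in F \setminus K$ fails to lie in $P$, that is, $g$ moves at least one loxodromic fixed point.

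It then remains to pass from infinitely many points to finitely many, which is pure bookkeeping made possible by the finiteness of $F$. For each of the finitely many elements $g \in F \setminus K$, I would choose a loxodromic $l_g$ with $g \cdot [l_g^n] \neq [l_g^n]$, as just produced, and set
$$ \{ l_1, \dots, l_p \} := \{ h_1, h_1^{-1}, h_2, h_2^{-1} \} \cup \{ l_g : g \in F \setminus K \}. $$
If $a \in \Stab([l_1^n], \dots, [l_p^n])$ then $a$ fixes $[h_i^{\pm n}]$, so $a \in F$; and if $a$ were in $F \setminus K$ it would move $[l_a^n]$, contradicting membership in the stabilizer. Hence $a \in K$, giving $\Stab([l_1^n], \dots, [l_p^n]) \subseteq K$ and therefore equality. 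The only genuinely substantive point is the identification $P = K$, which rests on the finiteness supplied by Lemma~\ref{two_points_stabilizer} together with the maximality of $K(G)$; everything after that is formal.
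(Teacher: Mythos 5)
Your proof is correct, and it follows the paper's overall skeleton while replacing its one substantive step with a different argument. Both proofs trap the kernel inside the finite stabilizer $F = \Stab([h_1^{\pm n}],[h_2^{\pm n}])$ supplied by Lemma~\ref{two_points_stabilizer}, and both then adjoin, for each of the finitely many elements $g \in F \setminus \ker(G \curvearrowright \partial X)$, a loxodromic $l_g$ whose attracting fixed point $g$ moves. The difference lies in how that $l_g$ is produced. The paper invokes density of loxodromic fixed points in the limit set $L(G)$ (Corollary 7.4.3 of \cite{MR3558533}): a non-kernel element moves a nonempty open subset of the boundary, which must meet the set of loxodromic fixed points. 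You instead observe that the pointwise stabilizer $P$ of the ($G$-invariant) set of \emph{all} loxodromic fixed points is a finite normal subgroup (finite because $P \subseteq F$), hence $P \subseteq K(G) = \ker(G \curvearrowright \partial X)$ by Lemma~\ref{Lem_KnownResults}, so every non-kernel element of $F$ moves some loxodromic fixed point. Your route is purely algebraic and self-contained within the paper --- Lemma~\ref{Lem_KnownResults} is established in Section 3, so there is no circularity --- and it quietly handles a point the density argument glosses over: loxodromic fixed points are dense in $L(G)$, not in all of $\partial X$, and the lemma does not assume the action is cobounded, so the paper implicitly needs that an element acting trivially on $L(G)$ already lies in $\ker(G \curvearrowright \partial X)$; your normality-plus-maximality argument is exactly what justifies this. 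The trade-off is that you lean on the full strength of the identification $K(G) = \ker(G \curvearrowright \partial X)$, whereas the paper's dynamical argument uses less of the Section 3 machinery.
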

\begin{proof}
 The inclusion $ \subset $ is trivial.
 By Lemma \ref{two_points_stabilizer}, there exist four points $[h_1^{\pm n}],[h_2^{\pm n}] \in \partial X$ such that their pointwise stabilizer subgroup, say $N$, is finite.
 If $N= \ker(G \curvearrowright \partial X)$, we are done. Otherwise, there exist only finitely many elements $g_1 , \cdots , g_m \in N - \ker(G \curvearrowright \partial X)$.
 Recall that the set of loxodromic fixed points is dense in $L(G)$ (Corollary $7.4.3$ in \cite{MR3558533}) so for each $1 \leq i \leq m$, there exists a loxodromic element $k_i$ such that $[k_i^n] \neq g_i[k_i^n]$ in $\partial X$.
 By the construction of $h_1, h_2$ and $k_1 , \cdots , k_m$, if $g \not \in \ker(G \curvearrowright \partial X)$, then $[l^n] \neq g \cdot [l^n] \in \partial X$ for some $l \in \{ h_1^{\pm},h_2^{\pm},k_1 , \cdots , k_m \}$.
 By taking $\{ l_1 , \cdots , l_p \} := \{ h_1, h_2 , k_1 , \cdots , k_m \}$, we complete the proof. 
\end{proof}

 Using the lemmas, we deduce that if $G$ is acylindrically hyperbolic, then the kernels of these two group actions are the same whenever Paulin's construction is well-defined and we choose suitable representatives.
 
\begin{thm} \label{Kernel_Paulin}
 Let $G$ be a finitely generated acylindrically hyperbolic group satisfying that its outer automorphism group $ \Out(G)$ is infinite.
 For any sequence $ [\phi_1] , \ [\phi_2] , \cdots $ in $\Out(G)$ with $[\phi_i]\neq[\phi_j]$, 
 there exist automorphism representatives $\phi_1 , \ \phi_2 , \cdots \in \Aut(G)$ such that the kernels of two actions on an asymptotic cone are the same, that is, 
 $$ \ker \left ( G \curvearrowright \Cone_{\omega}(G,d_n) \right ) = \ker \left ( G \overset{p}{\curvearrowright} \Cone_{\omega}(G) \right )$$ 
 for any ultrafilter $\omega$ and sequence $d_n$.
\end{thm}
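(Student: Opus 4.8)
The plan is to prove the equivalent statement that $\ker\left(G \overset{p}{\curvearrowright}\Cone_{\omega}(G)\right) = K(G)$, since Theorem \ref{AHG:4-equality-CP} already identifies the kernel of the natural action with $K(G)$. First I would pin down the representatives: for each $n$ choose $\phi_n \in [\phi_n]$, by precomposing with a suitable inner automorphism, so that the basepoint $e$ realizes the infimum defining $d_n$ up to an additive constant, i.e. $\max_{s \in S}\|\phi_n(s)\|_S \leq d_n + 1$. Because the classes $[\phi_i]$ are distinct this also forces $d_n \to \infty$ (only finitely many outer classes can send the fixed finite generating set into a ball of bounded radius), so $\lim_{\omega} d_n = \infty$ and the cone is genuine.

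For the inclusion $K(G) \subseteq \ker\left(G\overset{p}{\curvearrowright}\Cone_{\omega}(G)\right)$ I would use that $K(G)$ is finite, normal, and, being the unique maximal finite normal subgroup, characteristic, so $\phi_n(K(G)) = K(G)$. Hence for $g \in K(G)$ and any admissible $\{x_n\}$ the element $x_n^{-1}\phi_n(g)x_n$ lies in the finite set $K(G)$, giving $\|x_n^{-1}\phi_n(g)x_n\|_S \leq B := \max_{k \in K(G)}\|k\|_S$. Dividing by $d_n \to \infty$ yields $\lim_{\omega}\|x_n^{-1}\phi_n(g)x_n\|_S/d_n = 0$, so $g$ lies in the Paulin kernel.

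The substantial direction is $\ker\left(G\overset{p}{\curvearrowright}\Cone_{\omega}(G)\right) \subseteq K(G)$, which I would prove by contraposition. Fix $g \notin K(G)$. Since $K(G)$ is characteristic (Lemma \ref{Kernel_Characteristic} gives the characteristic property of the natural kernel, which equals $K(G)$), we get $\phi_n(g) \notin K(G)$ for every $n$, and by Lemma \ref{finite_stabilizer_kernel} we may write $K(G) = \ker(G\curvearrowright\partial X) = \Stab([l_1^m],\dots,[l_p^m])$ for finitely many loxodromic $l_1,\dots,l_p$. Thus each $\phi_n(g)$ moves some endpoint $[l_{i(n)}^m]$; as the index set is finite, an ultrafilter pigeonhole fixes a single loxodromic $l = l_i$ with $\phi_n(g)\cdot[l^m]\neq[l^m]$ for $\omega$-almost all $n$. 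I would then run the mechanism of Theorem \ref{AHG:4-equality-CP} with $\phi_n(g)$ in the role of $g$: for each such $n$ the elements $l$ and $\phi_n(g)^{-1}l\phi_n(g)$ are independent loxodromics, Lemma \ref{QIembeddingL} produces a power $M$ making $\langle l^M, (\phi_n(g)^{-1}l\phi_n(g))^M\rangle$ free and quasi-isometrically embedded, and the word $\phi_n(g)^{-1}l^{-N}\phi_n(g)l^N$ has length $\asymp N$ there. Taking the admissible witness $x_n = l^{j_n}$ with $j_n \asymp d_n$ (legitimate since $\|l^{j}\|_S \asymp j$), the Lipschitz orbit bound $\|x_n^{-1}\phi_n(g)x_n\|_S \geq \tfrac1C\, d(l^{j_n}\!\cdot o,\ \phi_n(g)\,l^{j_n}\!\cdot o)$ should give $\lim_{\omega}\|x_n^{-1}\phi_n(g)x_n\|_S/d_n > 0$, certifying $g \notin \ker\left(G\overset{p}{\curvearrowright}\Cone_{\omega}(G)\right)$.

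The hard part will be the uniformity in $n$. Unlike the proof of Theorem \ref{AHG:4-equality-CP}, where density of loxodromic fixed points yields one good auxiliary loxodromic for a single $g$, here $\phi_n(g)$ varies, and I need the free-subgroup and quasi-isometric-embedding constants — and, crucially, the amount by which $\phi_n(g)$ separates $[l^m]$ from $\phi_n(g)\cdot[l^m]$ measured at scale $d_n$ — to be bounded below uniformly over the $\omega$-large set. I expect to extract this from acylindricity of $G\curvearrowright X$, which supplies embedding constants depending only on a lower bound for translation length and an upper bound for the relevant Gromov products rather than on the specific pair, together with the normalization $\max_{s}\|\phi_n(s)\|_S \asymp d_n$, which ties the geometric size of $\phi_n(g)$ to $d_n$. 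Verifying that $\phi_n(g)$ moves $[l^m]$ by a definite amount \emph{at scale} $d_n$, so that the witness power $j_n \asymp d_n$ already sees the divergence of the two orbit rays, is the delicate point on which the whole inclusion rests.
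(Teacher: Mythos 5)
Your proposal follows the paper's skeleton almost exactly up to the decisive step: the reduction to $K(G)$ via Theorem \ref{AHG:4-equality-CP}, the choice of representatives by conjugating so that $\max_{s\in S}\|\phi_n(s)\|_S\leq d_n+1$ (the paper's $\tau_n$), the easy inclusion from finiteness plus the characteristic property of the kernel, the use of Lemma \ref{finite_stabilizer_kernel} together with an ultrafilter pigeonhole to fix a single loxodromic $l$ with $\phi_n(g)\cdot[l^m]\neq[l^m]$ for $\omega$-almost every $n$, the admissible witness $l^{\lfloor d_n\rfloor}$, and the upper bound on displacement coming from the normalization. All of that is correct and is what the paper does.

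The gap is in your mechanism for the lower bound $\lim_\omega\|l^{-\lfloor d_n\rfloor}\phi_n(g)l^{\lfloor d_n\rfloor}\|_S/d_n>0$, and it is twofold. First, Lemma \ref{QIembeddingL} requires $l$ and $\phi_n(g)^{-1}l\phi_n(g)$ to be \emph{independent}, and $\phi_n(g)\cdot[l^m]\neq[l^m]$ does not give that: $\phi_n(g)$ may swap the two fixed points of $l$, in which case the conjugate has the \emph{same} fixed pair and $\left\langle l^M,(\phi_n(g)^{-1}l\phi_n(g))^M\right\rangle$ is not free (it lies in the virtually cyclic stabilizer of that pair). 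In Theorem \ref{AHG:4-equality-CP} independence was arranged by re-choosing the auxiliary loxodromic, using density of loxodromic fixed points, to satisfy both conditions at once; here you cannot re-choose, because $l$ is forced on you by the fixed finite list of Lemma \ref{finite_stabilizer_kernel}. Second — and you say this yourself — even where independence holds, the exponent $M=M(n)$ and the quasi-isometric-embedding constants from Lemma \ref{QIembeddingL} depend on $n$, so ``length $\asymp N$'' with $N\asymp d_n$ yields no positive lower bound after dividing by $d_n$ unless those constants are uniform over an $\omega$-large set; deferring this to ``acylindricity'' is a hope, not an argument, so the inclusion $\ker\left(G\overset{p}{\curvearrowright}\Cone_{\omega}(G)\right)\subseteq K(G)$ is not established. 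The paper closes exactly this step without ever building free subgroups: from $\phi_n(g)\cdot[l^m]\neq[l^m]$ it deduces that the Gromov products $(\phi_n(g)l^k\mid l^k)_e$ stay bounded, hence $d_X(\phi_n(g)l^k,l^k)\geq d_X(l^k,e)\geq Ck-M$ with $C,M$ depending only on $l$ (an estimate that covers the swap case automatically), and then transfers this to the word metric with no loss of constants by taking $X=\Gamma(G,Y)$ with $Y\supseteq S$, so that $\|w\|_S\geq\|w\|_X$ for every $w$ (Osin's Theorem 5.4). If you want to salvage your route you must both dispose of the swap case separately and prove the uniform separation you identified; the Gromov-product argument is the tool that makes both issues disappear, and it is what the paper actually uses.
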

\begin{proof}
 For simplicity, put $N_1 := \ker \left ( G \curvearrowright \Cone_{\omega}(G,d_n) \right )$ and $\ N_2 := \ker \left ( G \overset{p}{\curvearrowright} \Cone_{\omega}(G) \right )$.
First, we assume $$ d_n = \inf_{g \in G} \max_{s \in S} d_S \left( g,\phi_n(s)g \right), $$ in other words, $d_n$ is the sequence in Paulin's construction.
 Choose $g \in N_1$. Then since the kernel $N_1$ is characteristic (Lemma \ref{Kernel_Characteristic}), $\phi(g)$ is in $N_1$ for any $\phi \in \Aut(G)$.
 So, in Paulin's construction, $\phi_n(g) \in N_1$ for all $n$ so we get $ g \in N_2. $ The inclusion $N_1 \subset N_2$ follows immediately since $N_1$ is finite.

 In order to show the opposite inclustion, it suffices to show that $g \not \in N_1$ implies $g \not \in N_2$ so choose $g \not \in N_1$. Then $\phi_i(g) \not \in N_1$. From Corollary \ref{AHG:4-equality}, it means that $\phi_i(g) \not \in \ker(G \curvearrowright \partial X)$.
 Here, $X$ is any $\delta$-hyperbolic space on which $G$ acts acylindrically and non-elementarily so we take $X := \Gamma(G,Y)$ as in Theorem 1.2 and 5.4 in \cite{MR3430352}.

 By Lemma \ref{finite_stabilizer_kernel}, choose finitely many loxodromic elements $l_1, \cdots, l_k \in G$.  
 Then there is $j \in \{ 1,2, \cdots , k \}$ such that $ \left \{ i \in \NN : [l_j ^n] \neq \phi_i (g) \cdot [h_j ^n]  \right \} \in \omega. $
 For simplicity, we will write $l:=l_j$ so $l$ is a loxodromic element and $$ \left \{ i \in \NN : [l ^n] \neq \phi_i (g) \cdot [l ^n]  \right \} \in \omega. $$
 Choose constants $C>0$ and $M \geq 0$ such that $ d_X(e,l ^k) \geq Ck-M $ for all $k > 0$.
 From $ [l ^n] \neq \phi_i (g) \cdot [l ^n] $ for $\omega$-almost $i$, we have
 $$ \lim_{k \to \infty} \left( d_X(\phi_i(g) l^k , e) + d_X(l^k,e) - d_X(\phi_i(g) l^k , l^k)  \right) \not \to \infty.$$
 It implies that $d_X(\phi_i(g) l^k , l^k) \geq d_X(l^k,e) \geq Ck-M$.

Also, due to Theorem 5.4 in \cite{MR3430352}, we obtain $ || l^{-k}\phi_i(g)l^k ||_S \geq ||  l^{-k}\phi_i(g)l^k ||_X:=d_X( l^k,\phi_i(g)l^k)$ for any finite generating set $S$ for $G$. 
Therefore, we get $$ || l^{-k}\phi_i(g)l^k ||_S \geq Ck-M. $$
It means $$\lim_{\omega} \frac{ || l^{-\lfloor d_n \rfloor}\phi_n(g)l^{\lfloor d_n \rfloor} ||_S}{d_n} \geq C.$$
It is obvious that $l^{\lfloor d_n \rfloor}$ is an admissible sequence, that is, $\{ l^{\lfloor d_n \rfloor} \} \in \Cone_{\omega}(G,d_n)$.

Now the only remaining thing to prove is that the ultra limit $\lim_{\omega} \frac{ || l^{-\lfloor d_n \rfloor}\phi_n(g)l^{\lfloor d_n \rfloor} ||_S }{d_n}$ is finite.
To do this, we choose a suitable representative $\phi_n(g)$.
Recall that $$ d_n = \inf_{g \in G} \max_{s \in S} d_S \left( g,\phi_n(s)g \right) $$ so for each $n$, there is $x_n \in G$ such that $ d_n \leq \max_{s \in S} d_S \left( x_n,\phi_n(s)x_n \right) \leq d_n + 1. $ In other words, 
$$ d_n \leq \max_{s \in S} ||x_n^{-1}\phi_n(s)x_n||_S \leq d_n + 1. $$
Taking $\tau_n(s):=x_n^{-1}\phi_n(s)x_n$, we have $d_n \leq \max_{s \in S} ||\tau_n(s)||_S \leq d_n + 1$ and 
$[\phi_n]=[\tau_n]$ in $\Out(G)$.
Since $||\tau_n(s)||_S \leq d_n+1$ for all $s \in S$, we obtain
\begin{align*}
\lim_{\omega} \frac{ || l^{-\lfloor d_n \rfloor}\tau_n(g)l^{\lfloor d_n \rfloor} ||_S }{d_n} &\leq
\lim_{\omega} \frac{ 2 || l^{\lfloor d_n \rfloor} ||_S }{d_n} + \lim_{\omega} \frac{ || \tau_n(g) ||_S }{d_n} \\
&\leq 2L + ||g||_S \\ &< \infty
\end{align*}
so it means $g \not \in N_2$. Thus, we conclude that the two kernel coincide when $ d_n = \inf_{g \in G} \max_{s \in S} d_S \left( g,\phi_n(s)g \right)$.

The general case follows from the fact that the kernel $\ker \left ( G \curvearrowright \Cone_{\omega}(G,d_n) \right )$ invariant under the choice of $d_n$.
\end{proof}

 Note that the condition $|\Out (G)| = \infty$ is essential since for some acylindrically hyperbolic group $G$, $\Out(G)$ is finite. For example, the mapping class group $\Mod (S)$ has a finite outer automorphism group; we refer to \cite{MR970079} and \cite{MR2658420}. 

We end this section by establishing an example that two kernels are different. We first notice that Paulin's construction is also valid for any abelian group $G$ if $G$ satisfies the original conditions, that is, $G$ is finitely generated and $|\Out(G)|=\infty$.
From this observation, we can establish an example such that $$ \ker \left ( G \curvearrowright \Cone_{\omega}(G,d_n) \right ) \neq \ker \left ( G \overset{p}{\curvearrowright} \Cone_{\omega}(G) \right ).$$

For a concrete example, take $G=\ZZ^2$.
Since $G$ is abelian, $\ker \left ( G \curvearrowright \Cone_{\omega}(G,d_n) \right ) = G$ for any $\omega$ and $d_n$.
Recall that $G$ is finitely generated and $\Out(G)=\GL(2,\ZZ)$ so Paulin's construction for $G=\ZZ^2$ is well-defined.
Since Paulin's construction does not have global fixed points, it means $\ker \left ( G \curvearrowright \Cone_{\omega}(G,d_n) \right ) \neq G$.
More precisely, choose an infinite sequence $$M_n := \begin{bmatrix}
1 & n \\ 0 & 1 \end{bmatrix} \in \Out(G).$$
Since the inner automorphism group $\Inn(G)$ is trivial, we can consider $M_n$ as a sequence of automorphisms. 
Let $S:=\{ e_1=(1,0) , e_2=(0,1) \}$ be the canonical generating set for $G$, then $M_n \cdot e_1 = (1,0) , \ M_n \cdot e_2 = (n,1)$.
From Paulin's construction, we have
\begin{align*}
 d_n & := \inf_{g \in G} \max_{s \in S} d_S(g,M_n(s)g) \\
 & = \inf_{g \in G} \max_{s \in S} d_S(e,M_n(s)) \ (\because G \textnormal{ is abelian}) \\
 & = n+1
\end{align*}
 Thus Paulin's construction gives the action $\{ e \}=\{ (0,0) \} \mapsto \{ M_n(e_2) e \} = \{ (n,1) \}$.
 Since $$ \frac{d_S(e,M_n(e_2)e)}{d_n}=1 ,$$ it implies 
 $\{ e \} \neq \{ M_n(e_2) e \}$ in $\Cone_{\omega}(G,d_n)$.
 It means that $e_2$ is not in $\ker \left ( G \overset{p}{\curvearrowright} \Cone_{\omega}(G) \right )$
 so we obtain $$ e_2 \not \in \ker \left ( G \overset{p}{\curvearrowright} \Cone_{\omega}(G) \right ) \neq \ker \left ( G \curvearrowright \Cone_{\omega}(G,d_n) \right ) = G.$$


\section{The Kernel of General Group Actions on Their Asymptotic Cones} \label{sec:action}
  We have proved that the kernel should be finite if $G$ is acylindrically hyperbolic.
This section is dedicated to the kernel of general groups acting on their asymptotic cone.
 We study the kernel of various groups and relate the kernel of group actions on its asymptotic cone to the elementariness of the group action.
 It is well-known that if $G$ is hyperbolic, then $G$ is either finite, virtually cyclic, or non-elementary. Also, many properties, including both algebraic and geometric ones, are actually equivalent to elementariness. For example, assuming that $G$ is hyperbolic, the following are equivalent.

\begin{itemize}
    \item $G$ is elementary hyperbolic.
    \item $G$ is amenable.
    \item $G$ is of polynomial growth.
    \item $G$ satisfies a non-trivial law \cite{MR2153979}.
    \item $G$ is boundedly generated \cite{MR1452851}.
\end{itemize}

 The first purpose is to give another equivalent condition for the elementariness.
To do this, we suggest the following definition and consider the virtually nilpontent group case.
\begin{defn} \label{Condition ast}
If a finitely generated group $G$ has $K(G)$ and satisfies
\begin{align*}
\begin{aligned} \label{Main_Equalities}
\ker \left ( G \curvearrowright \Cone_{\omega}(G,d_n) \right ) = \ker \left( G \curvearrowright \partial X \right ) = K(G) = \FC(G) = \mathcal{A}(G)
\end{aligned} \tag{$\ast$}
\end{align*}
for some $\delta$-hyperbolic space $X$, 
then we say that $G$ satisfies conidition \eqref{Main_Equalities}.
\end{defn} 
The following proposition implies that we can use the kernel $ \ker \left ( G \curvearrowright \Cone_{\omega}(G,d_n) \right )$ to classify whether $G$ is a non-elementary hyperbolic group or not.

\begin{prop} \label{virtual-nilpotent}
 Let $G$ be a finitely generated virtually nilpotent group. Then $\FC(G)$ is infinite so the kernel $ \ker \left ( G \curvearrowright \Cone_{\omega}(G,d_n) \right ) $ is also infinite.
\end{prop}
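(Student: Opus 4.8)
The plan is to reduce everything to the single statement that an infinite finitely generated nilpotent group has infinite center; the rest of the proposition is then formal. Recall that, by the convention in the paper, a virtually nilpotent group is infinite and contains a finite-index nilpotent subgroup $N$. Since $[G:N]<\infty$ and $G$ is infinite, $N$ is infinite, and since finite-index subgroups of finitely generated groups are finitely generated, $N$ is finitely generated. So I would first isolate the central claim: if $N$ is an infinite finitely generated nilpotent group, then $Z(N)$ is infinite.

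To prove that claim I would induct on the nilpotency class $c$ of $N$. The base case $c=1$ is trivial since then $Z(N)=N$. For $c\ge 2$, argue by contradiction: suppose $Z_1:=Z(N)$ is finite. Then $N/Z_1$ is infinite and of class $c-1$, so by induction its center $Z(N/Z_1)=Z_2/Z_1$ is infinite, where $Z_2=Z_2(N)$ denotes the second term of the upper central series. On the other hand, by definition $[Z_2,N]\subseteq Z_1$, so the commutator map gives a pairing $Z_2\times N\to Z_1$, $(z,g)\mapsto [z,g]$; using that its values lie in the central subgroup $Z_1$, one checks it is bi-additive, hence it induces a homomorphism $Z_2\to\Hom(N,Z_1)$ with kernel exactly $Z_1$. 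Because $N$ is finitely generated and $Z_1$ is finite, $\Hom(N,Z_1)$ is finite, whence $Z_2/Z_1$ is finite, contradicting the previous sentence. This forces $Z(N)$ to be infinite.

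With the claim in hand I would finish as follows. Any $z\in Z(N)$ is centralized by all of $N$, so $C_G(z)\supseteq N$ has finite index in $G$; hence the $G$-conjugacy class of $z$, which has cardinality $[G:C_G(z)]\le [G:N]$, is finite. Thus $Z(N)\subseteq\FC(G)$, and since $Z(N)$ is infinite, $\FC(G)$ is infinite. Finally, the inclusion $\FC(G)\subseteq\ker\left(G\curvearrowright\Cone_{\omega}(G,d_n)\right)$ established in Lemma \ref{EasyInclusion} (whose proof of this particular inclusion uses only that conjugacy classes of elements of $\FC(G)$ are bounded in the word metric, not the existence of $K(G)$) shows the kernel is infinite as well.

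The main obstacle is the central claim that an infinite finitely generated nilpotent group has infinite center; everything downstream, namely the finite-index centralizer computation and the final citation, is routine. The delicate point in that claim is handling torsion: a naive attempt to read off an infinite central element from the last nontrivial term of the lower central series can fail when that term happens to be finite, which is exactly why I would instead run the induction through the embedding $Z_2/Z_1\hookrightarrow\Hom(N,Z_1)$, converting finiteness of $Z(N)$ into finiteness of $\Hom(N,Z_1)$ and deriving a contradiction.
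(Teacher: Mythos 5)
Your proof is correct, and its core is genuinely different from the paper's. Both arguments reduce to the same key claim --- an infinite finitely generated nilpotent group $N$ has infinite center --- and both finish identically (centralizer of $z\in Z(N)$ contains a finite-index subgroup, so $Z(N)\subseteq \FC(G)$, then apply Lemma \ref{EasyInclusion}). But you prove the key claim by induction on the nilpotency class, embedding $Z_2/Z_1$ into $\Hom(N,Z_1)$ via the commutator pairing and using finite generation to make $\Hom(N,Z_1)$ finite; the paper instead quotes two structural facts, namely that the center of a nilpotent group meets every nontrivial normal subgroup nontrivially and that finitely generated nilpotent groups are virtually torsion-free, takes a finite-index torsion-free normal subgroup $S$ of $N$, and observes that a nontrivial element of $Z(N)\cap S$ must have infinite order. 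Your route is more self-contained and makes visible exactly where finite generation enters (the infinitely generated extraspecial $p$-groups show the claim fails without it); the paper's route is shorter modulo the quoted facts. Two smaller improvements on your side: you avoid the paper's passage to normal cores by noting that the conjugacy-class bound $[G:C_G(z)]\le[G:N]$ needs only $C_G(z)\supseteq N$, not normality of $N$; and you correctly flag that Lemma \ref{EasyInclusion} is stated under the hypothesis that $K(G)$ exists, but that the inclusion $\FC(G)\subseteq\ker\left(G\curvearrowright\Cone_{\omega}(G,d_n)\right)$ actually used here does not depend on that hypothesis --- a point the paper's own proof passes over silently.
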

\begin{proof}
 Let $H$ be a nilpotent group with $[G:H] < \infty$. Recall that $H$ is a finitely generated nilpotent group so $H$ is noetherian. Letting $$N:= \bigcap_{g \in G} g^{-1}Hg, $$ we obtain a finite index normal subgroup $N$ which is contained in $H$. Note that $N$ is also a finitely generated nilpotent group.

 Consider the center of $N$, $Z(N)$. It is known that the center of a nilpotent group is a normality-large subgroup, that is, the intersection of the center with any nontrivial normal subgroup is nontrivial. 
 From this fact and every finitely generated nilpotent group is virtually torsion-free (see Proposition $13.75$ and Corollary $13.81$ in \cite{MR3753580}), we can deduce that $Z(N)$ is infinite. 
 More precisely, since $N$ is a finitely generated nilpotent group, $N$ is virtually torsion-free so we choose a finite index torsion-free subgroup $F$ of $N$. 
Let $$S:= \bigcap_{g \in G} g^{-1}Fg$$ be a normal core of $F$, then $S$ is a finite index torsion-free normal subgroup of $N$.
 By normality-large subgroup property, the intersection $$ Z(N) \cap S \neq \{ e \}.$$
 Since $S$ is torsion-free, the intersection must be infinite hence $Z(N)$ is infinite.

 Choose $a \in Z(N)$. Then $nan^{-1}=a$ for all $n \in N$. Since $N$ is a finite index normal subgroup of $G$, we have that the set $ \left \{ gag^{-1}: g \in G \right \}$ is finite. It means $a \in \FC(G)$.
 Therefore, $\FC(G)$ is infinite as $Z(N) \subset \FC(G)$.
Also, it follows that the kernel $ \ker \left ( G \curvearrowright \Cone_{\omega}(G,d_n) \right ) $ is infinite from Lemma \ref{EasyInclusion}.
\end{proof}

Therefore, if $G$ is virtually nilpotent, then condition $\eqref{Main_Equalities}$ cannot be satisfied regardless of the existence of $K(G)$. If it exists, then by the definition, $K(G)$ is a finite subgroup so $K(G) \neq \ker \left ( G \curvearrowright \Cone_{\omega}(G,d_n) \right )$. Thus, in this sense, condition $\eqref{Main_Equalities}$ detects properties of the negative curvature of the group.

 Recall that an infinite hyperbolic group is either virtually cyclic or non-elementary. Similarly, if an infinite group $G$ acts acylindrically on some $\delta$-hyperbolic space $X$, then either $G$ has a bounded orbit, is virtually cyclic, or is non-elementary \cite[Theorem 1.1]{MR3430352}.
 From these facts, we obtain the following results.

\begin{cor} \label{elementary-and-infinity}
Let $G$ be an infinite hyperbolic group. Then $G$ is non-elementary if and only if the kernel $ \ker \left ( G \curvearrowright \Cone_{\omega}(G,d_n) \right )$ is finite.
  
 Furthermore, suppose that finitely generated group $G$ acts acylindrically on some $\delta$-hyperbolic space $X$ and contains a loxodromic element. Then the action of $G$ on $X$ is non-elementary (so $G$ is acylindrically hyperbolic) if and only if $ \ker \left ( G \curvearrowright \Cone_{\omega}(G,d_n) \right )$ is finite.
\end{cor}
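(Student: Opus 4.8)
The plan is to derive both equivalences from the same pair of facts already in hand: on the acylindrically hyperbolic side, Theorem \ref{AHG:4-equality-CP} collapses the kernel to the finite group $K(G)$; on the elementary side, Proposition \ref{virtual-nilpotent} shows a virtually nilpotent group has infinite kernel. The third ingredient is the structural trichotomy \cite[Theorem 1.1]{MR3430352} for a group acting acylindrically on a $\delta$-hyperbolic space, separating bounded-orbit, virtually cyclic, and non-elementary behaviour. Each equivalence then becomes a two-case argument: one implication I would hand directly to Theorem \ref{AHG:4-equality-CP}, and its converse I would prove contrapositively, excluding the elementary alternative through Proposition \ref{virtual-nilpotent}.

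For the first statement I would argue as follows. If the infinite hyperbolic group $G$ is non-elementary, then it is acylindrically hyperbolic, since a non-elementary hyperbolic group acts non-elementarily and acylindrically on its Cayley graph; Theorem \ref{AHG:4-equality-CP} then gives $\ker(G \curvearrowright \Cone_\omega(G,d_n)) = K(G)$, a finite group. For the converse I would take contrapositives: if $G$ is elementary, then being infinite hyperbolic it is virtually cyclic, hence virtually $\ZZ$ and in particular virtually nilpotent, so Proposition \ref{virtual-nilpotent} forces the kernel to be infinite. Thus a finite kernel implies non-elementarity.

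For the second statement I would first note that a loxodromic element has infinite order, so $G$ is infinite and the action $G \curvearrowright X$ cannot have bounded orbits. If the action is non-elementary, then $G$ is acylindrically hyperbolic and Theorem \ref{AHG:4-equality-CP} again returns a finite kernel. If it is elementary, then \cite[Theorem 1.1]{MR3430352} leaves only the virtually cyclic alternative, the bounded-orbit case being ruled out by the loxodromic element; such a $G$ is virtually $\ZZ$, hence virtually nilpotent, and Proposition \ref{virtual-nilpotent} makes the kernel infinite. Taking the contrapositive gives that a finite kernel forces the action to be non-elementary.

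The analytic content is entirely absorbed into Theorem \ref{AHG:4-equality-CP} and Proposition \ref{virtual-nilpotent}, so I expect no delicate asymptotic-cone estimate to be needed here. The only point requiring care is the converse bookkeeping: verifying that ``elementary'' genuinely lands in the hypotheses of Proposition \ref{virtual-nilpotent}. In the hyperbolic case this is the standard fact that an infinite elementary hyperbolic group is virtually cyclic; in the acylindrical case it is the observation that the loxodromic element discards the bounded-orbit branch of the trichotomy, leaving virtually cyclic, which is virtually nilpotent.
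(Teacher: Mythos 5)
Your proposal is correct and follows essentially the same route as the paper: the paper derives this corollary exactly from Proposition \ref{virtual-nilpotent} together with the dichotomy for infinite hyperbolic groups (elementary hence virtually cyclic, versus non-elementary hence acylindrically hyperbolic) and Osin's trichotomy \cite[Theorem 1.1]{MR3430352}, with Theorem \ref{AHG:4-equality-CP} supplying the finite kernel in the non-elementary case. Your careful handling of the converse (virtually cyclic $\Rightarrow$ virtually nilpotent, and the loxodromic element eliminating the bounded-orbit branch) is precisely the bookkeeping the paper leaves implicit.
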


 Motivated by Proposition \ref{virtual-nilpotent}, one may wonder whether a solvable group can have the trivial kernel.
In the class of solvable groups, there exists an example satisfying condition $\eqref{Main_Equalities}$ excluding only the amenable radical. One of the examples is the Baumslag-Solitar group $\BS(1,n)$ with $n > 1$. Recall that this group has a group presentation
$$\BS(1,n)= \left< a,t : tat^{-1}=a^n \right>.$$

First, it is well-known that $\BS(1,n)$ is solvable (actually, $\BS(1,n)=\ZZ[\frac{1}{n}] \rtimes \ZZ$) and torsion-free so $K(G)=1$ for $G=\BS(1,n)$. 
Next, we will show $ \ker \left ( G \curvearrowright \Cone_{\omega}(G,d_n) \right )=1$. It directly implies that all three subgroups, $K(G), \ \FC(G)$, and the kernel $\ker \left ( G \curvearrowright \Cone_{\omega}(G,d_n) \right )$ are trivial.

In order to show this, we need an asymptotic metric on $\BS(1,n)$. Since every element $x \in \BS(1,n)$ can be uniquely expressed as $$ x=t^{-l}a^Nt^m $$
where $l,m \geq 0$ and $n \not | \ N$ if $l, m > 0$, the following lemma gives full asymptotic metric information.

\begin{lem}[\cite{MR3384081}, Proposition 2.1] \label{BS-metric}
There exist constants $C_1,C_2,D_1,D_2 > 0$ such that for every element $x=t^{-l}a^Nt^m$ of $\BS(1,n)$, $N \neq 0$, we have
$$ C_1(l+m+\log|N|)-D_1 \leq ||x|| \leq C_2(l+m+\log|N|)+D_2 $$
where $||x||$ is the word metric with respect to the generators $a,t$.
\end{lem}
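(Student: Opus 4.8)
The plan is to prove the two bounds separately, treating the right-hand (upper) inequality as routine and the left-hand (lower) inequality as the substantive one, following the strategy of \cite{MR3384081}. For the upper bound I would exhibit an explicit short word for $x=t^{-l}a^{N}t^{m}$. The only nontrivial point is that the syllable $a^{N}$ must be spelled in $O(\log|N|)$ letters rather than $|N|$ letters. Using the defining relation in the form $a^{n}=tat^{-1}$ together with the base-$n$ expansion $N=\sum_{i=0}^{k} d_i n^{i}$, where $0\le d_i<n$ and $k=\lfloor \log_n|N|\rfloor$, one rewrites $a^{N}=a^{d_0}\,t\,a^{d_1}\,t\cdots t\,a^{d_k}\,t^{-k}$, a word of length at most $(n-1)(k+1)+2k$. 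Prepending $t^{-l}$ and appending $t^{m}$ then yields a word for $x$ of length at most $l+m+C_2\log|N|+D_2$ with constants depending only on $n$, giving the right-hand inequality.

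For the lower bound I would bound $l+m$ and $\log|N|$ separately against $L:=\|x\|$, the length of a geodesic word $w=s_1\cdots s_L$ for $x$. First, the estimate $L\ge l+m$ comes from the HNN structure $\BS(1,n)=\langle a\rangle\,*_{\langle a\rangle=\langle a^{n}\rangle}$: every Britton reduction deletes a pinch $t^{\pm1}a^{\ast}t^{\mp1}$ and hence removes exactly two occurrences of $t^{\pm1}$, so the number of $t$-letters in any word representing $x$ is at least the number occurring in its reduced normal form. Since $n\nmid N$ guarantees that $t^{-l}a^{N}t^{m}$ admits no further pinch, that reduced number is exactly $l+m$, and therefore $L\ge \#_t(w)\ge l+m$.

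For $\log|N|$ I would use the affine action $a:z\mapsto z+1$ and $t:z\mapsto nz$. Tracking the translation coefficient of the composite affine map and writing $h_j$ for the cumulative $t$-exponent of the prefix $s_1\cdots s_j$, one obtains $N n^{-l}=\sum_{j:\,s_j=a^{\pm1}}(\pm1)\,n^{\,h_{j-1}}$, which is precisely the mechanism inverse to the base-$n$ rewriting used in the upper bound. Bounding each summand by $n^{h^{+}}$, where $h^{+}=\max_j h_j\le \#_t(w)\le L$, and using $l\le L$ from the vertical estimate, gives $\log|N|\le \log L+(h^{+}+l)\log n\le C'L$, so $L\gtrsim \log|N|$. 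Combining the two bounds yields $2L\ge (l+m)+c\log|N|$, hence $\|x\|\ge C_1(l+m+\log|N|)-D_1$. The hard part will be the interplay between the two scales in this lower bound: one must rule out that the $n$-adic ``discount'' $n^{-l}$ in the translation coefficient lets a very large $N$ be produced cheaply, and it is exactly the Britton/normal-form control on the number of $t$-letters (equivalently, the reduced-form hypothesis $n\nmid N$) that prevents the vertical and horizontal contributions from decoupling.
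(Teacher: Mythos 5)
Your proof is correct, but there is no argument in the paper to compare it against: the paper states Lemma \ref{BS-metric} purely as a quotation of \cite{MR3384081}, Proposition 2.1, and uses it as a black box (its only role is in the proof of Lemma \ref{couterexample1_1}). Judged on its own, your two-sided argument is sound and is essentially the standard one. The upper bound via the base-$n$ expansion and the relation $a^n=tat^{-1}$ is fine (for $N<0$, expand $|N|$ and invert the word), and the lower bound correctly decouples into the Bass--Serre/Britton estimate $\|x\|\ge \#_t(w)\ge l+m$ and the affine estimate: with $a\mapsto(z\mapsto z+1)$, $t\mapsto(z\mapsto nz)$ one indeed gets $Nn^{-l}=\sum_{j:\,s_j=a^{\pm1}}(\pm1)n^{h_{j-1}}$, whence $\log|N|\le \log L+(h^{+}+l)\log n\le (1+2\log n)L$ using $h^{+}\le L$, $l\le l+m\le L$, and $\log L\le L$. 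Two small points should be made explicit to make this airtight. First, your Britton step needs slightly more than ``each reduction removes two $t$-letters'': after reducing an arbitrary word $w$ for $x$ you obtain \emph{some} Britton-reduced word $w'$ with $\#_t(w')\le \#_t(w)$, and to conclude $\#_t(w')=l+m$ you must invoke the uniqueness of the $t$-exponent signature among all reduced forms of a given element (the HNN normal form theorem, sometimes attributed to Collins), noting also that $t^{-l}a^{N}t^{m}$ is reduced even when $l=0$ or $m=0$: in that case the hypothesis $n\nmid N$ is not available, but no pinch can occur anyway since all $t$-letters carry the same sign. Second, the identity for the translation coefficient depends on fixing a composition convention (prefix exponents $h_{j-1}$ correspond to letting words act on the left); once that is fixed, the computation is correct. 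Neither point is a gap in substance, and your final combination $2L\ge (l+m)+c\log|N|$ yields the stated inequality with $C_1=\min(1,c)/2$ and any $D_1>0$.
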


\begin{lem} \label{couterexample1_1}
 Let $G=\BS(1,n)$. Then the kernel $\ker \left ( G \curvearrowright \Cone_{\omega}(G,d_k) \right )$ is trivial.
\end{lem}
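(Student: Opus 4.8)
The plan is to show that every nontrivial element of $G=\BS(1,n)$ fails to lie in the kernel; that is, for any prescribed $\omega$ and $d_k$ I would produce an admissible sequence $\{x_k\}$ with $\lim_{\omega}\|x_k^{-1}gx_k\|_S/d_k>0$. I would work in the identification $G\cong\ZZ[\tfrac1n]\rtimes\ZZ$ noted above, writing an element as a pair $(r,\kappa)$ with $r\in\ZZ[\tfrac1n]$ and $\kappa\in\ZZ$, under the bijection $t^{-l}a^Nt^m\leftrightarrow(n^{-l}N,\,m-l)$, so that multiplication is $(r_1,\kappa_1)(r_2,\kappa_2)=(r_1+n^{\kappa_1}r_2,\kappa_1+\kappa_2)$. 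A one-line computation then gives the conjugation formula $(s,j)^{-1}(r,\kappa)(s,j)=\bigl(n^{-j}(r+s(n^{\kappa}-1)),\,\kappa\bigr)$, which is the engine of the proof. Since the $\ZZ$-coordinate $\kappa$ of $g$ is a conjugacy invariant, the behaviour splits into two forced cases, and Lemma \ref{BS-metric} is used throughout to pass between a normal form $t^{-l}a^Nt^m$ and its word length $\asymp l+m+\log|N|$.

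First I would treat $g=(r,\kappa)$ with $\kappa\neq0$. Conjugating by $x=a^{s}=(s,0)$ yields $(r+s(n^{\kappa}-1),\kappa)$; because $n^{\kappa}-1\neq0$ and $r$ has $n$-denominator bounded independently of $s$, the $\ZZ[\tfrac1n]$-coordinate has bounded denominator exponent and numerator growing linearly in $s$. Hence by Lemma \ref{BS-metric} both $\|a^{s}\|_S$ and $\|a^{-s}ga^{s}\|_S$ are $\asymp\log|s|$. Taking $s_k=n^{\lfloor d_k\rfloor}$ makes $\{a^{s_k}\}$ admissible (so that $\|a^{s_k}\|_S/d_k$ is bounded) while forcing $\lim_{\omega}\|a^{-s_k}ga^{s_k}\|_S/d_k$ to be bounded below by a positive constant, so $g\notin\ker$.

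The remaining case is $\kappa=0$, $r\neq0$, which is where the conjugation formula degenerates: conjugation by the abelian part $a^{s}$ acts trivially, so one is forced to conjugate by powers of $t$. Here $(0,j)^{-1}(r,0)(0,j)=(n^{-j}r,0)$, and writing $r=N_0/n^{l_0}$ with $n\nmid N_0$, the conjugate $n^{-j}r=N_0/n^{l_0+j}$ has normal form $t^{-(l_0+j)}a^{N_0}t^{l_0+j}$, whose length is $\asymp j$ by Lemma \ref{BS-metric}, while $\|t^{j}\|_S=|j|$ by the homomorphism $G\to\ZZ$, $t\mapsto1$, $a\mapsto0$. Choosing $j_k=\lfloor d_k\rfloor$ again yields an admissible sequence with $\lim_{\omega}\|t^{-j_k}gt^{j_k}\|_S/d_k>0$, so $g\notin\ker$. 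The only genuine obstacle is precisely this dichotomy: elements of the normal subgroup $\ZZ[\tfrac1n]$ are invisible to conjugation within the abelian part and must instead be stretched by the exponential distortion of the $t$-action, so the two cases require genuinely different conjugators and both depend on the sharp two-sided estimate of Lemma \ref{BS-metric} to align the conjugator's length with the scale $d_k$ while keeping the conjugate's length a fixed positive fraction of $d_k$. Combining the two cases shows $\ker\left(G\curvearrowright\Cone_{\omega}(G,d_k)\right)$ is trivial.
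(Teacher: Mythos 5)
Your proof is correct, and its route differs from the paper's in a meaningful way, although both arguments rest on the same key tool, the word-length estimate of Lemma \ref{BS-metric}, and both follow the same overall strategy of exhibiting, for each $g \neq 1$, an admissible sequence whose $g$-displacement is comparable to $d_k$. The paper does this with a \emph{single} conjugating sequence valid for all nontrivial $g$ at once, namely $x_k = t^{-1}a\,t^{\lfloor d_k\rfloor}$: pushing $g = t^{-l}a^N t^m$ through $x_k$ with the relation $ta = a^n t$ gives $x_k^{-1}gx_k = t^{-\lfloor d_k\rfloor}a^{n(N-Nl+m-l)-1}t^{\lfloor d_k\rfloor+m-l}$, whose $a$-exponent is $\equiv -1 \pmod{n}$, hence nonzero and not divisible by $n$, so Lemma \ref{BS-metric} yields $\|x_k^{-1}gx_k\| \asymp 2d_k$ while $\|x_k\| \asymp d_k$. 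You instead work in the coordinates $\ZZ[\tfrac1n]\rtimes\ZZ$ and split on the conjugacy-invariant $\ZZ$-coordinate $\kappa$, using pure conjugators: $a^{n^{\lfloor d_k\rfloor}}$ when $\kappa \neq 0$ and $t^{\lfloor d_k\rfloor}$ when $\kappa = 0$. Both of your estimates are sound, with two harmless caveats that hold only for large $k$: when $\kappa < 0$ you need $\lfloor d_k\rfloor \geq |\kappa|$ for $s_k(n^{\kappa}-1)$ to have bounded denominator, and in the $\kappa = 0$ case your $l_0$ may be negative (when $r$ is an integer divisible by $n$), so the normal form $t^{-(l_0+j_k)}a^{N_0}t^{l_0+j_k}$ is only valid once $j_k \geq -l_0$. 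What your version buys is a structural explanation of the two stretching mechanisms: elements with nonzero $t$-part are displaced by conjugation in the abelian direction, while elements of the normal subgroup $\ZZ[\tfrac1n]$ are stretched by the exponential distortion of $t$-conjugation. What the paper's version buys is brevity and uniformity, and it in fact refutes your closing assertion that the dichotomy is ``forced'' and that ``the two cases require genuinely different conjugators'': the mixed conjugator $t^{-1}a\,t^{\lfloor d_k\rfloor}$ handles all nontrivial $g$ simultaneously. That overstatement is only a side remark, though, not a gap in your argument.
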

\begin{proof}
Choose a non-trivial element $g \in \BS(1,n)$. Using the normal form, we can write $g=t^{-l}a^Nt^m$ for some $l,m,N$, and let $x_k=t^{-l_k}a^{N_k}t^{m_k}$. Then we have 
 $$x_k^{-1}gx_k = (t^{-m_k}a^{-N_k}t^{l_k})t^{-l}a^Nt^m(t^{-l_k}a^{N_k}t^{m_k}) = t^{-m_k}a^{-N_k}t^{l_k-l}a^Nt^{m-l_k}a^{N_k}t^{m_k}.$$
 Obviously, the rightmost part is not the normal form so we need to compute more. By using a relation $ta=a^n t$, we have
\begin{align*}
t^{-m_k}a^{-N_k}t^{l_k-l}a^Nt^{m-l_k}a^{N_k}t^{m_k} &= t^{-m_k}a^{-N_k}(t^{l_k-l}a^N)t^{m-l_k}a^{N_k}t^{m_k} \\
&= t^{-m_k}a^{-N_k}(a^{nN(l_k-l)}t^{l_k-l})t^{m-l_k}a^{N_k}t^{m_k} \\
&= t^{-m_k}a^{nNl_k-nNl-N_k}t^{m-l}a^{N_k}t^{m_k} \\
&= t^{-m_k}a^{nNl_k-nNl-N_k}(t^{m-l}a^{N_k})t^{m_k} \\
&= t^{-m_k}a^{nNl_k-nNl-N_k}(a^{nN_k(m-l)}t^{m-l})t^{m_k} \\
&= t^{-m_k}a^{nNl_k-nNl-N_k+nN_km-nN_k l}t^{m_k+m-l}.
\end{align*}
The last expression 
\begin{align*} \label{form}
t^{-m_k}a^{nNl_k-nNl-N_k+nN_km-nN_k l}t^{m_k+m-l}
\tag{$\S$}
\end{align*}
looks like the normal form. Note that the expression \eqref{form} can be written as
 $$t^{-m_k}a^{n(Nl_k-Nl+N_km-N_k l)-N_k}t^{m_k+m-l}.$$
 Now take $l_k=1, N_k=1$ and $m_k=\lfloor d_k \rfloor$.
Then we can easily check that expressions $$x_k=t^{-1}at^{ \lfloor d_k \rfloor}, \ \textnormal{ and } x_k^{-1}gx_k=t^{-\lfloor d_k \rfloor}a^{n(N-Nl+m- l)-1}t^{\lfloor d_k \rfloor +m-l}$$ are the normal forms, at least for sufficiently large $d_k$.
Thus by Lemma \ref{BS-metric} we have
$$\lim_{k \to \infty} \frac{||x_k||}{d_k} = \alpha \textnormal{ and } \lim_{k \to \infty} \frac{||x_k^{-1}gx_k||}{d_k} = \lim_{k \to \infty} \frac{|| t^{-\lfloor d_k \rfloor}a^{n(N-Nl+m- l)-1}t^{\lfloor d_k \rfloor +m-l} ||}{d_k} = \beta $$
 where $ C_1 \leq \alpha \leq C_2 $ and $ 2 C_1 \leq \beta \leq 2 C_2$ (here, $C_1, C_2$ are constants in Lemma \ref{BS-metric}).
 It means that $\{ x_k \}$ is admissible, that is, $\{ x_k \} \in \Cone_{\omega}(G,d_k)$ and $$g \not \in \ker \left ( G \curvearrowright \Cone_{\omega}(G,d_k) \right ).$$
 Since $g \neq 1$ is arbitrary, the kernel $\ker \left ( G \curvearrowright \Cone_{\omega}(G,d_k) \right )$ is trivial.
\end{proof}

 Now we concentrate on group actions of $\BS(1,n)$ on $\delta$-hyperbolic spaces. Recall that $\BS(1,n)$ is finitely generated solvable but not virtually nilpotent so
 it has a finite index subgroup such that the subgroup admits a focal action on some $\delta$-hyperbolic space \cite[Proposition 3.7]{balasubramanya2022property}. 
 Indeed, all the cobounded actions of $\BS(1,n)$ on $\delta$-hyperbolic space are known in \cite{abbott2019actions}.
 Among these actions, we consider the action of $\BS(1,n)$ on $\HH^2$ via the following representation
 $$\BS(1,n) \to \PSL(2,\RR)$$ given by $$ a \mapsto \begin{bmatrix} 1 & 1 \\ 0 & 1 \end{bmatrix}, \ t \mapsto \begin{bmatrix}
 \sqrt{n} & 0 \\ 0 & \frac{1}{\sqrt{n}}
 \end{bmatrix}.$$

Then it follows that $\ker \left( G \curvearrowright \partial X \right )$ is trivial letting $G=\BS(1,n)$ and $X=\HH^2$.
Combining with Lemma \ref{couterexample1_1}, we obtain the following result.

\begin{prop}
 There exists a finitely generated solvable group $G$ such that $$ \ker \left ( G \curvearrowright \Cone_{\omega}(G,d_n) \right ) = \ker \left( G \curvearrowright \partial X \right ) = K(G) = \FC(G) = 1 $$
 for any ultrafilter $\omega$ and a sequence $d_n$,
 where $X$ is some $\delta$-hyperbolic space and the action $G \curvearrowright X$ is cobounded.
\end{prop}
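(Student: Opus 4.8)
The plan is to take $G=\BS(1,n)$ with $n>1$ as the witness and simply assemble the three ingredients prepared in the preceding discussion: its torsion-freeness, Lemma \ref{couterexample1_1}, and the explicit $\PSL(2,\RR)$-representation. The statement splits cleanly into an algebraic part (the equalities not involving $\partial X$) and a geometric part (the boundary kernel for a cobounded hyperbolic action), so I would treat these separately and then merge.

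First I would dispose of the algebraic equalities. Since $\BS(1,n)\cong\ZZ[\tfrac{1}{n}]\rtimes\ZZ$ is torsion-free it contains no nontrivial finite subgroup, whence $K(G)=1$. Lemma \ref{couterexample1_1} gives $\ker\left(G\curvearrowright\Cone_{\omega}(G,d_n)\right)=1$ for every $\omega$ and $d_n$, and Lemma \ref{EasyInclusion} supplies the chain $K(G)\subseteq\FC(G)\subseteq\ker\left(G\curvearrowright\Cone_{\omega}(G,d_n)\right)=1$. Hence all three of $K(G)$, $\FC(G)$, and the cone kernel collapse to $1$ simultaneously, uniformly in $\omega$ and $d_n$.

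Next I would handle the hyperbolic action. Take $X=\HH^2$, which is $\delta$-hyperbolic with $\partial X\cong S^1$, and use the faithful representation $\rho:G\to\PSL(2,\RR)$ sending $a,t$ to the stated matrices; concretely $\rho(a):z\mapsto z+1$ and $\rho(t):z\mapsto nz$, and $\rho$ realizes the isomorphism $\ZZ[\tfrac{1}{n}]\rtimes\ZZ\to\mathrm{Aff}^+(\RR)\hookrightarrow\PSL(2,\RR)$, so it is injective. Two points remain. \emph{Coboundedness:} the orbit of the basepoint $i$ is $\{\,m+n^k i:m\in\ZZ[\tfrac{1}{n}],\ k\in\ZZ\,\}$; consecutive heights $n^k$ and $n^{k+1}$ differ by the constant hyperbolic distance $\log n$, while at each height $n^k$ one can approximate the real part by an element of a sublattice of $\ZZ[\tfrac{1}{n}]$ whose spacing scales with $n^k$, keeping the horizontal hyperbolic displacement uniformly bounded; thus the orbit is coarsely dense and the action is cobounded. \emph{Boundary kernel:} a nonidentity element of $\PSL(2,\RR)$ fixes at most two points of $S^1$ and so cannot act trivially on $\partial\HH^2$, whence $\ker\left(G\curvearrowright\partial X\right)=\ker\rho=1$.

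Combining the two parts yields all the displayed equalities, with $X=\HH^2$ serving as a cobounded $\delta$-hyperbolic space. The only genuinely nonformal step is the coboundedness verification: one must use the $n$-adic density of $\ZZ[\tfrac{1}{n}]$ at the appropriate scale for each height (in particular the subgroups $n^{-j}\ZZ\subseteq\ZZ[\tfrac{1}{n}]$ at small heights), since this is precisely what certifies $\HH^2$ as an \emph{admissible} choice of $X$ rather than merely a space on which $G$ acts isometrically. Everything else is either recorded in the cited lemmas or is a standard property of Möbius actions.
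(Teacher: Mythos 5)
Your proposal is correct and follows essentially the same route as the paper: the same witness $G=\BS(1,n)$, the same use of torsion-freeness for $K(G)=1$, Lemma \ref{couterexample1_1} together with the inclusion chain of Lemma \ref{EasyInclusion} for the cone kernel and $\FC(G)$, and the same faithful representation into $\PSL(2,\RR)$ acting on $X=\HH^2$ for the boundary kernel. The only difference is cosmetic and in your favor for self-containedness: you verify coboundedness of the orbit in $\HH^2$ and faithfulness of the representation by hand, whereas the paper outsources the coboundedness claim to the classification of cobounded actions of $\BS(1,n)$ on hyperbolic spaces in the cited literature.
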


 The following example shows that ruling out the condition of actions of a group $G$ on a $\delta$-hyperbolic space $X$, there exists a group satisfying condition $\eqref{Main_Equalities}$ excluding the kernel of the action on $\partial X$.

\begin{prop}
$G=SL(3,\ZZ)$ satisfies the following equalities $$ \ker \left ( G \curvearrowright \Cone_{\omega}(G,d_n) \right ) = K(G) = \FC(G) = \mathcal{A}(G)=1.$$
\end{prop}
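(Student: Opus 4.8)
The statement splits into two essentially independent parts: the triviality of the algebraic invariants $K(G),\FC(G),\mathcal A(G)$, and the triviality of the cone kernel.

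For the invariants the plan is to invoke the Margulis Normal Subgroup Theorem, since $G=\SL(3,\ZZ)$ is an irreducible lattice in the higher-rank group $\SL(3,\RR)$: every normal subgroup of $G$ is either finite and central or of finite index. The center of $\SL(3,\ZZ)$ consists of scalar matrices $\lambda I$ with $\lambda\in\ZZ$ and $\lambda^3=1$, hence $\lambda=1$, so $Z(G)=1$. The amenable radical $\mathcal A(G)$ is normal and amenable, and it cannot be of finite index because $G$ contains nonabelian free subgroups and is therefore non-amenable; thus $\mathcal A(G)$ is finite and central, so $\mathcal A(G)=1$. Since $G$ is countable, Lemma \ref{F<A} gives $\FC(G)\subset\mathcal A(G)=1$, and Lemma \ref{EasyInclusion} gives $K(G)\subset\FC(G)=1$; in particular $K(G)$ exists and equals $1$.

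For the cone kernel I would use the description of the kernel recalled earlier: $g$ lies in $\ker(G\curvearrowright\Cone_\omega(G,d_n))$ precisely when $\lim_\omega \|x_n^{-1} g x_n\|_S/d_n=0$ for every admissible sequence $\{x_n\}$. So it suffices to produce, for each $g\neq 1$, one admissible sequence of conjugators along which this ratio stays bounded away from $0$. The difficulty relative to the acylindrically hyperbolic case is that $\SL(3,\ZZ)$ is not acylindrically hyperbolic and its action on the symmetric space $X=\SL(3,\RR)/\mathrm{SO}(3)$ is not cocompact, so one cannot simply push a basepoint out along a direction of growing displacement and expect to remain near the orbit. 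Instead I would use explicit algebraic conjugators. Fix $g\neq 1$ and write $s=E_{ij}(1)=I+e_{ij}$ for an elementary matrix, so $s^{\pm m}=I\pm m\,e_{ij}$ and
$$ s^{-m} g s^{m} = g + m\,(g\,e_{ij}-e_{ij}\,g) - m^2\, g_{ji}\,e_{ij}, $$
using $e_{ij}\,g\,e_{ij}=g_{ji}\,e_{ij}$. If $g$ is not diagonal, pick $i\neq j$ with $g_{ji}\neq 0$ so the $m^2$-term survives; if $g$ is diagonal it is non-scalar (the only scalar in $G$ is $I$), so for some $i\neq j$ the linear term $(g_{ii}-g_{jj})\,e_{ij}$ is nonzero. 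Either way $m\mapsto s^{-m}gs^{m}$ is a nonconstant matrix polynomial, whence $\|s^{-m}gs^{m}\|_{\mathrm{op}}\gtrsim m$ for large $m$. Taking $m_n:=\lceil e^{d_n}\rceil$ and $x_n:=s^{m_n}$, the trivial bound $\|h\|_S\ge \log\|h\|_{\mathrm{op}}/\log C$ (with $C=\max_{s\in S}\|s\|_{\mathrm{op}}$) yields $\|x_n^{-1} g x_n\|_S\gtrsim\log m_n\ge d_n$, so the ratio is bounded below by a positive constant; as $g\neq 1$ was arbitrary, this forces $\ker(G\curvearrowright\Cone_\omega(G,d_n))=1$.

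The step I expect to be the main obstacle is \emph{admissibility} of the conjugators, i.e.\ the upper bound $\|x_n\|_S=\|s^{m_n}\|_S\lesssim d_n$. The lower estimate on the displacement of the conjugate is elementary, but controlling the word length of $s^{m}$ from above is exactly the assertion that elementary (unipotent) matrices are logarithmically distorted, $\|s^{m}\|_S\asymp\log m$, which is the content of the Lubotzky--Mozes--Raghunathan undistortion theorem for $\SL(3,\ZZ)$ in $\SL(3,\RR)$; this is the one external input I would rely on. A more self-contained alternative I would keep in reserve is to conjugate instead by powers of a fixed $\RR$-regular hyperbolic element $h\in\SL(3,\ZZ)$ chosen so that $g$ does not preserve its eigenflag: then $\langle h\rangle$ is undistorted by elementary $\CAT(0)$ geometry (its orbit is a quasigeodesic in $X$) and $\|h^{-m}gh^{m}\|_{\mathrm{op}}$ grows exponentially, but arranging such an $h$ uniformly in $g$ is less transparent than the unipotent computation above.
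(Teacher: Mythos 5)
Your proof is correct, but your route to the triviality of the cone kernel is genuinely different from the paper's. Writing $N$ for the kernel, the paper never shows that every nontrivial element moves a point of the cone. It only verifies that the single elementary matrix $g=e_{1,2}$ satisfies $g^k\notin N$ for all $k\neq 0$, using the Steinberg identity $[e_{1,2}^k,e_{2,3}^n]=e_{1,3}^{kn}$ together with the same Lubotzky--Mozes--Raghunathan estimate $\|e_{i,j}^M\|_S\asymp\log M$ that you invoke; since a finite-index subgroup contains a power of every element, $N$ must have infinite index, and Margulis' normal subgroup theorem then forces $N=1$. You instead treat an arbitrary $g\neq 1$ head-on, choosing the conjugating unipotent $s=I+e_{ij}$ according to the entries of $g$, extracting linear or quadratic growth of the entries of $s^{-m}gs^{m}$, getting the word-length lower bound from the elementary operator-norm estimate, and using LMR only for admissibility of $x_n=s^{m_n}$ with $m_n=\lceil e^{d_n}\rceil$. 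The paper's approach minimizes computation (one commutator identity plus soft arguments) but is tied to the normal subgroup property of higher-rank lattices; yours dispenses with Margulis for the kernel statement (both proofs still need it for $\mathcal{A}(G)=1$), so the same computation would transfer to matrix groups without the normal subgroup property, and it also makes explicit the rescaling to admissible sequences that the paper leaves implicit. Both arguments rest on the identical essential external input, the LMR logarithmic distortion of unipotents. One cosmetic repair: Lemma \ref{EasyInclusion} presupposes that $K(G)$ exists, so you should not cite it to deduce existence; instead observe that any finite normal subgroup is contained in $\FC(G)=1$, whence $K(G)=1$ exists.
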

\begin{proof}
 Margulis' normal subgroup theorem tells us that for any normal subgroup $N$ of $G = SL(3,\ZZ)$, either $N$ is trivial, or $N$ is of finite index. 
 Obviously, $K(G)=1$ and $\mathcal{A}(G)=1$ since $G$ is not amenable ($G$ contains a rank $2$ free group) and any virtually amenable group is again amenable.

 Since $\FC(G) \subset \ker \left ( G \curvearrowright \Cone_{\omega}(G,d_n) \right )$, in order to complete the proof, it suffices to prove that the kernel $\ker \left ( G \curvearrowright \Cone_{\omega}(G,d_n) \right )$ is not a finite index subgroup of $G$.

 For simplicity, put $N:= \ker \left ( G \curvearrowright \Cone_{\omega}(G,d_n) \right )$ and we shall prove that $N$ cannot be of finite index. For $i \neq j$, let $e_{i,j}$ be the matrix in $G=SL(3,\ZZ)$ such that it has $1$ on the main diagonal and $(i,j)$ component and $0$ otherwise. Then the following statements are well-known.
\begin{itemize}
    \item For distinct $1 \leq i,j,k \leq 3$, $[e_{i,j}^M,e_{j,k}^N]=e_{i,k}^{MN}$.
    \item There exists a constant $C>0$ such that $\frac{1}{C} \log(M) \leq || e_{1,2}^M || \leq C \log(M)$ (See 2.14 in \cite{MR1828742}).
\end{itemize}
Thus, there exists $D>0$ such that $$\frac{1}{D} \log(M) \leq || e_{i,j}^M || \leq D \log(M)$$ for every $i \neq j$. Now put $g=e_{1,2}$ and $x_n=e_{2,3}^n$. Then for $k \in \ZZ-\{ 0 \}$, we have $|| [g^k,x_n ]||=||e_{1,3}^{kn}||$ so we have $$\frac{1}{D} \log(kn) \leq || [g^k,x_n ]||  \leq D \log(kn).$$
However, it also holds that $\frac{1}{D} \log(n) \leq || x_n ||  \leq D \log(n).$ Therefore, we obtain
\begin{align*}
\lim_{n \to \infty} \frac{|| x_n ^{-1} g^{k} x_n ||}{|| x_n ||} = \lim_{n \to \infty} \frac{|| g^{-k} x_n ^{-1} g^{k} x_n ||}{|| x_n ||} 
&= \lim_{n \to \infty} \frac{ || [g^{k} , x_n ] ||}{|| x_n ||} \\ 
&\geq \lim_{n\ \to \infty} \frac{1}{D^2} \frac{\log(k)+\log(n)}{\log(n)} = \frac{1}{D^2} > 0.
\end{align*}
 The above means $g^k \not \in N$ for every $k \in \ZZ$, unless $k=0$. But if $N$ is a finite index subgroup of $G$, then for every $a \in G$, $a^M \in N$ for some $M>0$. It implies that $N$ cannot be a finite index subgroup.
 Now the result $N=1$ follows from Margulis' normal subgroup theorem.
\end{proof}

We note that $SL(3,\ZZ)$ does not admit any interesting actions on a $\delta$-hyperbolic space $X$ (see \cite{MR4094562} and 
\cite{bader2022hyperbolic}). With the terminology Property (NL) which stands for No Loxodromics, in \cite{balasubramanya2022property}, $SL(3,\ZZ)$ satisfies Property (NL). 
Thompson group $T$ is another example. Note that $T$ also satisfies Property (NL) \cite{balasubramanya2022property}.

\begin{prop}
Thompson group $T$ satisfies the following equalities $$ \ker \left ( T \curvearrowright \Cone_{\omega}(T,d_n) \right ) = K(T) = \FC(T) = \mathcal{A}(T)=1.$$
\end{prop}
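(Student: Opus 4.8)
The plan is to run the entire argument off the simplicity of $T$. First I would record that Thompson's group $T$ is an infinite, finitely presented \emph{simple} group (Higman; Cannon--Floyd--Parry), so every normal subgroup of $T$ is either trivial or all of $T$. Each of the four subgroups $K(T)$, $\FC(T)$, $\mathcal{A}(T)$, and $\ker\!\left(T\curvearrowright\Cone_{\omega}(T,d_n)\right)$ is normal in $T$, hence each equals $\{1\}$ or $T$, and it remains only to exclude the value $T$ in each case.

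Three of the four are immediate and soft. Since $T$ is infinite, its only finite normal subgroup is trivial, so $K(T)=1$ (in particular $K(T)$ exists). Since $T$ contains nonabelian free subgroups it is non-amenable, so its amenable radical cannot be all of $T$; hence $\mathcal{A}(T)=1$. Finally, $T$ is countable, so by Lemma \ref{F<A} we have $\FC(T)\subseteq\mathcal{A}(T)=1$, giving $\FC(T)=1$.

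The only genuine content is to show $\ker\!\left(T\curvearrowright\Cone_{\omega}(T,d_n)\right)\neq T$ for every $\omega$ and $d_n$; equivalently, to exhibit one element that moves a point of the asymptotic cone, for every choice of $\omega$ and $d_n$. Note that the acylindrically hyperbolic machinery of Section \ref{sec:AHG} is unavailable, since $T$ has Property (NL), so I would argue directly in the word metric. Take the standard generators $x_0,x_1$ of the copy of Thompson's group $F$ inside $T$, set $g:=x_1$, and consider the conjugates $w_m:=x_0^{-m}x_1x_0^m$. Geometrically $w_m$ is a fixed nontrivial pattern pushed into the dyadic interval $[1-2^{-(m+1)},1]$ at depth $\sim m$, so its reduced tree-pair diagram requires $\sim m$ carets; by the tree-pair-diagram description of the word metric (Burillo--Cleary--Stein-type estimates, together with the fact that $F$ is quasi-isometrically embedded in $T$), there is $c>0$ with $\|w_m\|\ge c\,m$, while trivially $\|x_0^m\|\le m$. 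Then for any $\omega$ and any unbounded nondecreasing $d_n$, the sequence $y_n:=x_0^{\lfloor d_n\rfloor}$ is admissible ($\|y_n\|/d_n\le 1$) and
$$ \lim_{\omega}\frac{\|y_n^{-1}x_1y_n\|}{d_n}\;\ge\;\lim_{\omega}\frac{c\,\lfloor d_n\rfloor}{d_n}\;=\;c\;>\;0, $$
so $x_1$ does not fix $\{y_n\}$ and $x_1\notin\ker\!\left(T\curvearrowright\Cone_{\omega}(T,d_n)\right)$. Thus the kernel is a proper normal subgroup and hence trivial, for every $\omega$ and $d_n$.

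The main obstacle is precisely the lower bound $\|w_m\|\ge c\,m$. The subtlety is that $w_m$ has only boundedly many breakpoints, so the naive ``number of breakpoints'' estimate (which would be submultiplicative under composition) fails to detect its size; one must instead use that $w_m$ lives at depth $\sim m$ in the dyadic tree and invoke the caret-counting geometry of $T$ and the undistortedness of $F$ in $T$. Everything else in the proof is formal, resting only on simplicity, non-amenability, and Lemma \ref{F<A}.
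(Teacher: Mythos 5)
Your proposal is correct and follows essentially the same route as the paper: reduce everything to properness of normal subgroups via simplicity of $T$, handle $K(T)$, $\FC(T)$, $\mathcal{A}(T)$ by soft arguments, and kill the kernel by showing the conjugates of $x_1$ by powers of $x_0$ have linearly growing word length (via the caret-counting estimates of Burillo--Cleary--Stein--Taback) while $\|x_0^{\lfloor d_n\rfloor}\|$ stays admissible. The only cosmetic differences are the direction of conjugation and that you route the length bound through undistortedness of $F$ in $T$ and Lemma \ref{F<A} for $\FC(T)$, where the paper applies the $T$-metric estimate and a citation directly.
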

\begin{proof}
Recall that $T$ is simple \cite[Theorem 5.8]{MR1426438} so in order to show the equalities hold, we just check that these normal subgroups are not whole group $T$. First, the simpleness and infiniteness of $T$ imply $K(T)=1$.
In \cite{MR3630641}, we have $\FC(T)=1$. Since $T$ is not amenable \cite{MR3630641}, we have $\mathcal{A}(T)=1$ so the only remaining to show is that the kernel $\ker \left ( T \curvearrowright \Cone_{\omega}(T,d_n) \right ) $ is trivial.

It is well-known that $T$ has a normal form \cite[Theorem 5.7]{MR1426438} and its word metric property \cite[Theorem 5.1]{MR2452818}. Let $x_0,x_1 \in T$ be two generators for $T$. Then it follows directly from Theorem 5.1 in \cite{MR2452818} that there is $C>0$ such that $$ \frac{n}{C} \leq ||x_0^{-n}|| \leq Cn \textnormal{ and } \frac{2n+1}{C} \leq ||x_0^nx_1x_0^{-n}|| \leq (2n+1)C $$
(recall that $x_0^{-n}$ and $x_0^{n}x_1x_0^{-n}$ are already normal forms). Thus,
$$ 0 < \frac{2n+1}{C^2n} \leq \frac{|| x_0^{n}x_1x_0^{-n} ||}{|| x_0^{-n} ||} \leq \frac{C^2(2n+1)}{n}.$$
It means that $\{x_0^{-n}\} \neq x_1 \cdot \{ x_0^{-n} \}$ in the asymptotic cone $\Cone_{\omega}(T,n)$ so for general sequence $d_n$, replacing $x_0^{-n}$ with $x_0^{-\lfloor d_n \rfloor}$, we can conclude that $x_1$ is not in the kernel $ \ker \left ( T \curvearrowright \Cone_{\omega}(T,d_n) \right ) $.
Since $T$ is simple, the kernel $ \ker \left ( T \curvearrowright \Cone_{\omega}(T,d_n) \right ) $ is trivial.
\end{proof}

 We conclude this section with another interesting example and a question that arises from the example.
 There exists a group $G$ that acts on $\delta$-hyperbolic space and satisfies our whole equalities, condition $\eqref{Main_Equalities}$ but $G$ itself is not acylindrically hyperbolic. The braided Thompson group $BV$ serves as an example. 
 This group was first designed by Brin \cite{MR2364825} and Dehornoy \cite{MR2258261}, independently. We refer to these two articles for more details. In this paper, we use the ``tree-braid-tree" form for describing elements in $BV$. 
 Let $\mathcal{I}$ be the set of triples $(T_1 , \sigma , T_2)$ where $T_1$ and $T_2$ are binary trees with $n$ leaves and $\sigma$ is a braid with $n$ strings. In this paper, we only consider a \emph{binary tree} that is rooted, finite, and planar.
 We say that a vertex $v$ in a binary tree is a \emph{leaf} if its degree is $1$. In other words, $v$ is a leaf if and only if $v$ is connected to only one vertex via an edge.

 We will define an equivalence relation called expansion in \cite{fournier2022braided}. Recall that there exists a surjective group homomorphism $ \pi_n : B_n \to S_n $ from the braid group to the symmetric group, for all $n>1$.
 We call the binary tree with $3$ vertices and $2$ leaves, the caret. 
 Let $(T_1,\sigma, T_2) \in \mathcal{I}$ and suppose that $T_1$ and $T_2$ have $n$ leaves. For $1 \leq k \leq n$, the $k$-th expansion of the triple $(T_1,\sigma, T_2)$ is $(T_1',\sigma',T_2')$ where $T_1'$ is a new binary tree obtained from $T_1$ by adding the caret to $k$-th leaf of $T_1$. 
 Similarly, $T_2'$ is a binary tree obtained from $T_2$ by adding the caret to $\pi_n(k)$-th leaf of $T_2$. 
 $\sigma'$ is a braid in $B_{n+1}$ obtained from $\sigma \in B_n$ by bifurcating the $k$-th string into parallel two strings.
 Now we say that two triple $(T_1,\sigma_1,T_2), (T_1',\sigma_2,T_2')$ in $\mathcal{I}$ are equivalent if one is the $k$-th expansion of the other for some $k$.
 Rigorously, it is not an equivalence relation. It is neither reflexive nor transitive. But by considering the equivalence relation generated by this binary relation, we obtain the equivalence relation and as a set, $BV$ is the set of all equivalence classes. To describe an element of $BV$, we abuse the notation, namely, 
we denote it by a triple $(T_1, \sigma ,T_2)$.

 Now we define a group operation on $BV$. For given two elements $(T_1,\sigma_1,T_2), (T_1',\sigma_2,T_2')$, let $U$ be a binary tree such that $U$ is a finitely many consecutive expansions of both $T_2$ and $T_1'$.
 Then up to the equivalence class, these two elements are expressed as $(\tilde{T_1},\tilde{\sigma_1},U) , (U,\tilde{\sigma_2}, \tilde{T_2'})$. Define
 $$ (T_1,\sigma_1,T_2) \circ (T_1',\sigma_2,T_2') = (\tilde{T_1},\tilde{\sigma_1},U) \circ (U,\tilde{\sigma_2}, \tilde{T_2'}) := (\tilde{T_1} , \tilde{\sigma_1} \tilde{\sigma_2} , \tilde{T_2'}).$$
 Then binary operation $\circ$ is well-defined and $BV$ with the binary operation $\circ$ is a group.
 
Also, it is known that there exists a surjection $\pi : BV \twoheadrightarrow V$. This group homomorphism is induced by the group homomorphisms $\pi_n: B_n \to S_n$ and the kernel is denoted by $bP$ \cite{fournier2022braided}, $PBV$ \cite{MR2384840}, and $P_{br}$ \cite{MR3781416}. However, we denote it by $PB_{\infty} := \ker (\pi)$ since it is a well-known fact that the kernel is isomorphic to the pure braid group with infinite strings $PB_{\infty}$. Finally, we remark that $PB_{\infty}$ is the set of all triples $(T,\rho,T)$ where $\rho$ is a pure braid.

 Now we will establish that $BV$ is a counterexample by proving the following series of lemmas.

\begin{lem} \label{BV-notAH}
 The braided Thompson group $BV$ is not acylindrically hyperbolic.
\end{lem}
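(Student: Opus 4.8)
The plan is to exploit the short exact sequence $1 \to PB_{\infty} \to BV \xrightarrow{\pi} V \to 1$, where $PB_{\infty} = \ker(\pi)$ is the infinite-strand pure braid group. Since pure braids of infinite order exist, $PB_{\infty}$ is infinite, and it is normal in $BV$ by construction. By Lemma~\ref{infinite_normalsubgroup_acyl}, acylindrical hyperbolicity passes to infinite normal subgroups; taking the contrapositive, it therefore suffices to prove that $PB_{\infty}$ itself is \emph{not} acylindrically hyperbolic, and the claim for $BV$ follows immediately.

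To show $PB_{\infty}$ is not acylindrically hyperbolic, I would argue that no element of $PB_{\infty}$ can be a loxodromic WPD element for any action on a hyperbolic space, so that condition~(4) of Theorem~\ref{Equi_Defs_AH} fails. The key structural observation is that every element of $PB_{\infty}$ is represented by a tree-braid-tree triple $(T,\rho,T)$ in which $\rho$ is a pure braid on \emph{finitely many} strands. Fix an infinite-order $g = (T,\rho,T)$ whose nontrivial braiding is carried by a finite set of leaves of $T$. After performing expansions at leaves disjoint from the support of $\rho$, one produces infinitely many pairwise-disjoint ``strand regions'' untouched by $g$; a pure braid (for instance a generator twisting two strands) localized in any such region commutes with $g$. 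Collecting one such twist from infinitely many disjoint regions yields a subgroup $\langle \tau_1, \tau_2, \ldots \rangle \le C_{PB_{\infty}}(g)$ of the centralizer which is infinitely generated and, in particular, not virtually cyclic.

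I would then derive the contradiction as follows. If $PB_{\infty}$ were acylindrically hyperbolic, Theorem~\ref{Equi_Defs_AH} would furnish an action on a $\delta$-hyperbolic space $X$ and a loxodromic WPD element $g \in PB_{\infty}$. Its centralizer $C_{PB_{\infty}}(g)$ fixes the pair of endpoints $\{g^{+}, g^{-}\} \subset \partial X$, and acylindricity forces the stabilizer of a pair of distinct loxodromic fixed points to be virtually cyclic, which is precisely the mechanism behind Lemma~\ref{two_points_stabilizer}. Thus $C_{PB_{\infty}}(g)$ would be virtually cyclic, contradicting the infinitely generated commuting subgroup produced above. Hence $PB_{\infty}$ is not acylindrically hyperbolic, and therefore neither is $BV$.

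The main obstacle, and the place where real care is needed, is the combinatorial bookkeeping in the expansion model: one must verify that twists supported on distinct expanded regions genuinely commute with $g$ \emph{after} passing to a common expansion, and that the resulting commuting twists are independent enough to generate a group that is not virtually cyclic (it suffices that they generate a free abelian group of infinite rank, which already cannot be virtually cyclic). The other point to confirm carefully is the clean statement that for a loxodromic WPD element the centralizer is virtually cyclic; this is standard for acylindrical actions and is the single-element analogue of Lemma~\ref{two_points_stabilizer}, so I would either invoke it from \cite{MR3589159} or reprove it directly from the WPD hypothesis.
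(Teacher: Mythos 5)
Your proposal is correct and follows essentially the same route as the paper: reduce to the infinite normal subgroup $PB_{\infty}$ via Lemma \ref{infinite_normalsubgroup_acyl}, then contradict acylindrical hyperbolicity by showing that every element of $PB_{\infty}$ has a centralizer that is not virtually cyclic, using the fact that a loxodromic WPD element must have virtually cyclic centralizer. The paper's centralizer step is just more direct than yours: since any $x \in PB_{\infty}$ lies in $PB_k$ for some finite $k$, the squared generators $\sigma_l^2$ for $l > k$ already commute with $x$, so none of the expansion bookkeeping in the tree-braid-tree model is needed.
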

\begin{proof}
 Recall that $BV$ contains the pure braid group with infinite strings $PB_{\infty}$ as a normal subgroup \cite[Remark 10.6]{aramayona2021asymptotic} and \cite[Corollary 3.2]{MR2392498}. It suffices to check that $PB_{\infty}$ is not acylindrically hyperbolic.

 Recall that if $G$ is acylindrically hyperbolic and $g \in G$ is loxodromic, then the centralizer $C(g)$ is virtually cyclic. Hence, by the definition of acylindrical hyperbolicity, whenever $G$ is acylindrically hyperbolic, there exists $g \in G$ such that its centralizer $C(g)$ is virtually cyclic. To show that $PB_{\infty}$ is not acylindrically hyperbolic, we will show that for any $x \in PB_{\infty}$, the centralizer $C(x)$ cannot be virtually cyclic.

 Choose $x \in PB_{\infty}$. Then $x \in PB_k$ for some $k \in \NN$. Let $\{ \sigma_n : n \in \NN \}$ be the natural generator for $B_{\infty}$.
 Then $\sigma_{l}^2 \in PB_{\infty}$ and $ \sigma_l^2 \in C(x) $ for all $l > k$. This implies that $C(x)$ is not virtually cyclic. $C(x)$ contains $\ZZ^n$ for all $n \in \NN$ and actually, $C(x)$ contains a subgroup isomorphic to $PB_{\infty}$.
 Since $x$ is arbitrary, there is no $g \in PB_{\infty}$ such that $C(g)$ is virtually cyclic so $PB_{\infty}$ is not acylindrically hyperbolic.
\end{proof}

 The braided Thompson group $BV$ acts on $\delta$-hyperbolic space called the ray graph. 
 The ray graph $R$ was first designed by Danny Calegari on his blog \cite{CalegariBlog}, and obviously, the mapping class group of $\RR^2 - C$, denoted by $M$, acts on the ray graph where $C$ stands for the Cantor set. 
 He suggested this space as the analog of the complex of curves.
 Later J. Bavard (\cite{MR3470720} and English-translated version \cite{bavard2018gromov}) showed that the ray graph has infinite diameter and is $\delta$-hyperbolic. Also, it is proved that $M$ admits two independent loxodromic elements.
 It is known that $BV$ is a subgroup of $M$ \cite{fournier2022braided} so clearly, $BV$ also acts on the ray graph $R$ which is $\delta$-hyperbolic. 

\begin{lem} \label{BV-Gromov_kernel}
 Let $R$ be the ray graph. Then the kernel $\ker \left( BV \curvearrowright \partial R \right )$ is trivial.
\end{lem}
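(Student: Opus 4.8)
The plan is to show that the normal subgroup $N:=\ker\left(BV\curvearrowright\partial R\right)$, which by definition fixes every point of $\partial R$, must be trivial. First I would record that the action $BV\curvearrowright R$ is non-elementary: the two independent loxodromic isometries of $M$ on the $\delta$-hyperbolic graph $R$ can be realized inside $BV$ (they arise from shift-type homeomorphisms, which lie in $BV$), so $\partial R$ is infinite, and loxodromic fixed points are dense in the limit set $L(BV)\subseteq\partial R$ (a standard fact for non-elementary actions on hyperbolic spaces). Since $N$ fixes all of $\partial R$ pointwise, it in particular fixes the two endpoints of every loxodromic axis; as a loxodromic isometry fixes exactly two of the infinitely many boundary points, this already shows that $N$ contains \emph{no} loxodromic element.

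Next I would cut $N$ down using the extension $1\to PB_{\infty}\to BV\xrightarrow{\pi}V\to 1$ together with the simplicity of $V$, so that $\pi(N)\in\{1,V\}$. The key auxiliary observation (already exploited in Lemma~\ref{BV-notAH}) is that every element of $PB_{\infty}$ is supported on finitely many strands, hence acts as a \emph{compactly supported} mapping class with bounded orbits in $R$; in particular $PB_{\infty}$ contains no loxodromic element. Suppose $\pi(N)=V$, pick a loxodromic $f\in BV$ and $n\in N$ with $\pi(n)=\pi(f)$, so that $fn^{-1}\in PB_{\infty}$. Because $n$ acts trivially on $\partial R$, the boundary homeomorphisms induced by $fn^{-1}$ and by $f$ coincide; the latter has exactly two fixed points with north--south dynamics, and an isometry inducing such boundary dynamics is loxodromic, contradicting the previous observation. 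Hence $\pi(N)=1$ and $N\subseteq PB_{\infty}$.

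It then remains to rule out a nontrivial $g\in N\subseteq PB_{\infty}$, that is, a nontrivial finitely supported pure braid acting trivially on $\partial R$. The plan here is geometric: given such a $g$, supported on a finite set of strands, I would produce a loxodromic element $f\in BV$ whose axis passes essentially through the support of $g$, so that $g$ alters the way a representative ray of the attracting endpoint $f^{+}\in\partial R$ winds around that support, yielding $g\cdot f^{+}\neq f^{+}$ and contradicting $g\in N$. \textbf{This last step is the main obstacle}: it amounts to proving that the $BV$-action on the Gromov boundary $\partial R$ (equivalently on the limit set $L(BV)$) is faithful, i.e. that no nontrivial finitely supported mapping class fixes every boundary point. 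I expect this to require an Alexander-method / filling-rays argument for the ray graph, showing that the classes of escaping rays detected by $\partial R$ separate mapping classes, or the explicit construction of a loxodromic axis crossing a prescribed compact region; crucially it cannot invoke the acylindrical machinery used earlier in the paper, since $BV$ is not acylindrically hyperbolic by Lemma~\ref{BV-notAH}.
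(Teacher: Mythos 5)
Your proposal has two genuine gaps, and the second one is acknowledged in your own write-up. First, both your opening step (non-elementarity of $BV \curvearrowright R$) and your argument ruling out $\pi(N)=V$ require a loxodromic element of $BV$ for the ray-graph action: you assert that Bavard's loxodromic isometries of $M$ ``arise from shift-type homeomorphisms, which lie in $BV$,'' but this is exactly what is \emph{not} known --- the paper states explicitly, right after this lemma's application, that it is still unknown whether the action of $BV$ on $R$ admits two independent loxodromic elements, and no loxodromic in $BV$ is exhibited anywhere. Two further claims in this part are also shakier than you suggest: the assertion that every element of $PB_{\infty}$ has bounded orbits on $R$ is not what Lemma \ref{BV-notAH} proves (that lemma is purely algebraic, via centralizers, and never mentions the ray graph), and the inference ``same boundary action as a loxodromic, hence loxodromic'' requires an argument in a non-proper, non-locally-finite space such as $R$. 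Second, the step you flag as ``the main obstacle'' --- showing that no nontrivial element of $PB_{\infty}$ fixes $\partial R$ pointwise --- is not an auxiliary difficulty; it is essentially the entire content of the lemma, and you leave it unproved, offering only the hope of an Alexander-method or axis-through-support construction.

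For comparison, the paper closes that gap by a completely different and in fact stronger route: it proves $\ker\left(M \curvearrowright \partial R\right)=1$ for the \emph{whole} mapping class group $M$ of $\RR^2 - C$, which makes the reduction to $PB_{\infty}$ (and hence any need for loxodromics in $BV$, simplicity of $V$, or the extension $1 \to PB_{\infty} \to BV \to V \to 1$) unnecessary. The mechanism is the Bavard--Walker structure theory of $\partial R$: an element fixing $\partial R$ pointwise preserves all cliques of high-filling rays, hence fixes every high-filling ray that forms a singleton clique; such rays are dense in the simple circle because $M$ acts minimally on it, so the element acts trivially on the simple circle, hence on the ray graph itself (which the simple circle contains, together with all short rays), and faithfulness of $M \curvearrowright R$ finishes the proof. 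So the correct proof runs through the combinatorial identification of boundary points with cliques of rays rather than through any dynamical or algebraic property of $BV$; your plan, as it stands, reduces the lemma to an open problem plus an unproven faithfulness statement.
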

\begin{proof}
 Recall that we can consider $BV$ as a subgroup of the mapping class group of $\RR^2 - C$ so we will show that $\ker(M \curvearrowright \partial R)$ is trivial. The result follows immediately from this.

 Let $x \in M$ be an element fixing $\partial R$ pointwisely. Then by \cite{MR3852444}, $x$ preserves all cliques of high-filling rays. In particular, there exist cliques that contain only one high-filling ray (see subsection 2.7.1 in \cite{MR3852444}).
 Thus, $x$ should fix all such high-filling rays.

 Notice that the set of all such high-filling rays is dense in the circle called the simple circle. The notion of the simple circle is introduced in \cite{MR4266358} and the density property now follows from the fact that $M$ acts minimally on the simple circle. 
 Recall that such cliques are $M$-invariant and the simple circle contains all high-filling rays.

 Therefore, $x$ acts trivially on the simple circle. Since the simple circle also contains all short rays, $x$ acts trivially on the ray graph $R$.
 Since the mapping class group $M$ acts faithfully on $R$, we conclude that $x$ is trivial. Therefore, $ \ker \left( M \curvearrowright \partial R \right ) = 1 $.
\end{proof}

 It is known that $BV$ is finitely presented (\cite{MR2364825} and \cite[Theorem 3.1]{MR2384840}) so its asymptotic cone is well-defined. The following lemma shows that the kernel of the action of $BV$ on asymptotic cones is trivial.

 \begin{lem} \label{BV-Cone_kernel}
 The kernel $\ker \left ( BV \curvearrowright \Cone_{\omega}(BV,d_n) \right )$ is trivial for any ultrafilter $\omega$ and sequence $d_n$.
 \end{lem}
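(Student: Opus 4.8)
The plan is to transcribe the proof of Theorem~\ref{AHG:4-equality-CP} almost verbatim, using the ray graph $R$ as the ambient $\delta$-hyperbolic space in place of $X$. Two of the three ingredients of that proof are already available here: $R$ is $\delta$-hyperbolic and $BV\subseteq M$ acts on it isometrically, and by Lemma~\ref{BV-Gromov_kernel} the boundary kernel $\ker\left(BV\curvearrowright\partial R\right)$ is trivial. The point is that acylindricity of the group was used in Theorem~\ref{AHG:4-equality-CP} only to guarantee the existence and density of loxodromic directions; the quasi-isometric embedding step, Lemma~\ref{QIembeddingL}, requires nothing more than an isometric action on a $\delta$-hyperbolic space together with two independent loxodromics. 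Hence the failure of acylindricity for $BV$ (Lemma~\ref{BV-notAH}) is no obstruction to the cone computation itself.

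Concretely, I would fix a nontrivial $g\in BV$ and first produce a loxodromic $h\in BV$ for the action on $R$ with $g^{-1}\cdot[h^n]\neq[h^n]$ and $\Fix(h)\cap\Fix(g^{-1}hg)=\emptyset$. Setting $x:=h$ and $y:=g^{-1}hg$, these are independent loxodromic elements of $BV$, so by the ping-pong lemma $\langle x^k,y^k\rangle\cong F_2$ for large $k$, and by Lemma~\ref{QIembeddingL} the orbit map $\langle x^k,y^k\rangle\to R$ may be taken to be a quasi-isometric embedding. Writing $z_n:=g^{-1}h^{-nk}gh^{nk}=y^{-nk}x^{nk}$, which has length $2n$ in $F_2$, the embedding gives $2n\leq d_R(o,z_n\cdot o)\leq C\|z_n\|_S$ with $C:=\sup_{s\in S}d_R(o,s\cdot o)$, so $\|z_n\|_S\geq\frac{2}{C}n$ and $\|h^{-nk}gh^{nk}\|_S$ grows linearly in $n$. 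Evaluating at the admissible sequence $\{h^{\lfloor d_n\rfloor}\}\in\Cone_\omega(BV,d_n)$ exactly as in Theorem~\ref{AHG:4-equality-CP} then yields $g\cdot\{h^{\lfloor d_n\rfloor}\}\neq\{h^{\lfloor d_n\rfloor}\}$, so $g\notin\ker\left(BV\curvearrowright\Cone_\omega(BV,d_n)\right)$; since $g\neq1$ was arbitrary, the kernel is trivial.

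The genuinely new step, and the main obstacle, is the very first one: producing a loxodromic $h\in BV$ that detects $g$. This needs $BV$ itself---not merely the ambient big mapping class group $M$---to act non-elementarily on $R$, i.e.\ to contain two independent loxodromic isometries of the ray graph, and it needs a nontrivial $g$ to move some loxodromic direction of $BV$ (note $h$ must lie in $BV$, since the word length $\|h^{-nk}gh^{nk}\|_S$ is taken in $BV$). I would first exhibit independent loxodromics inside $BV$ and show their fixed points are dense in the $BV$-limit set. If $g$ fixed every such direction it would fix a dense set of high-filling rays and hence, by the minimality of the $M$-action on the simple circle used in Lemma~\ref{BV-Gromov_kernel}, act trivially on $\partial R$, contradicting $g\neq1$ together with $\ker\left(BV\curvearrowright\partial R\right)=1$. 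Density then lets me perturb $h$ so that $\Fix(h)$ is moved entirely off itself by $g$, giving the disjointness $\Fix(h)\cap\Fix(g^{-1}hg)=\emptyset$ required above. Establishing that $BV$ carries enough loxodromics on $R$, through the embedding $BV\hookrightarrow M$ and the geometry of the ray graph rather than any abstract acylindricity, is the crux; once it is in place the cone estimate is a direct transcription of Theorem~\ref{AHG:4-equality-CP}.
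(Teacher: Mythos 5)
Your transcription of the cone estimate is fine as far as it goes: granted a loxodromic $h\in BV$ for the action on the ray graph $R$ with $[h^n]\neq g^{-1}\cdot[h^n]$ and $\Fix(h)\cap\Fix(g^{-1}hg)=\emptyset$, Lemma~\ref{QIembeddingL} and the comparison $2n\le d_R(o,z_n\cdot o)\le C\,||z_n||_S$ use only an isometric action on a $\delta$-hyperbolic space, so the failure of acylindricity is indeed no obstruction to that part. The genuine gap is exactly the step you call the crux, and it is not a routine verification: producing such an $h$ for every nontrivial $g$ forces $BV$ to contain two independent loxodromics on $R$ (namely $h$ and $g^{-1}hg$), i.e.\ the action $BV\curvearrowright R$ must be non-elementary --- and the paper states explicitly, at the end of this very section, that it is still unknown whether the action of $BV$ on the ray graph contains two independent loxodromic elements. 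Your proposal therefore rests on an open problem. Your fallback detection argument does not close this gap either: if $g$ fixed all loxodromic directions of $BV$, you would only conclude that $g$ fixes a set dense in the limit set $L(BV)$, which may be a proper closed subset of $\partial R$; the minimality invoked in Lemma~\ref{BV-Gromov_kernel} is minimality of the $M$-action on the simple circle, not of the $BV$-action, so you cannot upgrade this to triviality on all of $\partial R$, and hence there is no contradiction with $\ker\left(BV\curvearrowright\partial R\right)=1$.

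The paper's proof takes a completely different, combinatorial route that never touches the ray graph. It first shows that the generator $B$ of Thompson's group $F\subset BV$ is not in the kernel $N$, using the asymptotic word metric of $BV$ from \cite{MR2514382} (word length is comparable to the number of leaves of the tree-braid-tree form): both $||A^n||$ and $||A^{-(n-1)}BA^{n-1}||$ grow linearly in $n$, so $B$ moves the admissible sequence $\{A^{\lfloor d_n\rfloor}\}$. Since $N$ is then a proper normal subgroup, Corollary 2.8 of \cite{MR3781416} forces $N\subset PB_{\infty}$. Finally, for $1\neq g\in PB_{\infty}$, two claims about right depth and leaf counts show that $||A^{-(n+p)}gA^{n+p}||$ grows linearly in $n$ for suitable $p$, so $g$ moves the admissible sequence $\{A^{\lfloor d_n\rfloor+p}\}$, giving $N=1$. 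Note that if you could actually carry out the first step of your plan, you would do much more than reprove this lemma: by the paper's closing remark, exhibiting two independent loxodromics of $BV$ on $R$ would resolve (negatively) the question the authors pose at the end of the section.
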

 \begin{proof}
  For simplicity, put $N := \ker \left ( BV \curvearrowright \Cone_{\omega}(BV,d_n) \right )$ and fix an ultrafilter $\omega$ and a sequence $d_n$. 
  Asymptotic word metric on $BV$ is well-known due to \cite[Theorem3.6]{MR2514382}.

  First, recall that $BV$ contains Thompson's group $F$. Every element in $F$ can be represented as two binary tree diagrams with the same number of leaves. By adding the trivial braid between two diagrams, every element in $F$ is also an element in $BV$. For example, two generators $A, B$ for $F$ in \cite{MR1426438} can be expressed as the following tree-braid-tree form. See Figure \ref{A,B_in_F}.

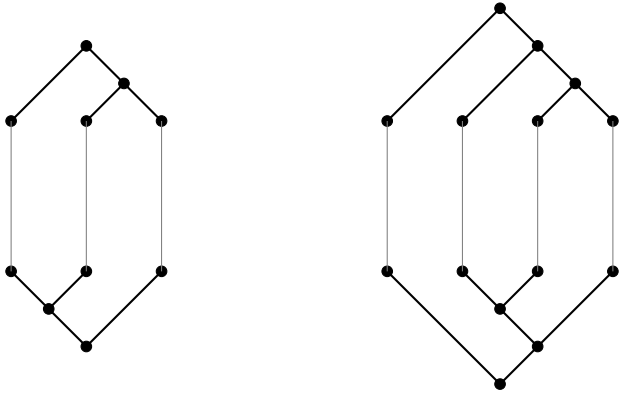
\begin{figure}[h!]
  \begin{center}
\begin{tikzpicture}
    \draw[black, thick] (-2,1) -- (-3,2) -- (-4,1);
    \filldraw[black] (-2,1) circle (2pt);
    \filldraw[black] (-3,1) circle (2pt);    
    \filldraw[black] (-4,1) circle (2pt);
    \filldraw[black] (-3,2) circle (2pt);
    \draw[black, thick] (-3,1) -- (-2.5,1.5);
    \filldraw[black] (-2.5,1.5) circle (2pt);
    \filldraw[black] (-2,-1) circle (2pt);
    \filldraw[black] (-3,-1) circle (2pt);    
    \filldraw[black] (-4,-1) circle (2pt);
    \filldraw[black] (-3,-2) circle (2pt);
    \draw[black, thick] (-2,-1) -- (-3,-2) -- (-4,-1);
    \filldraw[black] (-3.5,-1.5) circle (2pt);
    \draw[black, thick] (-3,-1) -- (-3.5,-1.5);
    \draw[gray, thin] (-2,1) -- (-2,-1); 
    \draw[gray, thin] (-3,1) -- (-3,-1); 
    \draw[gray, thin] (-4,1) -- (-4,-1); 
    \draw[black, thick] (1,1) -- (2.5,2.5) -- (4,1);
    \filldraw[black] (1,1) circle (2pt);
    \filldraw[black] (2,1) circle (2pt);    
    \filldraw[black] (3,1) circle (2pt);
    \filldraw[black] (4,1) circle (2pt);
    \filldraw[black] (2.5,2.5) circle (2pt);
    \draw[black, thick] (3,1) -- (3.5,1.5);
    \filldraw[black] (3.5,1.5) circle (2pt);
    \draw[black, thick] (2,1) -- (3,2);
    \filldraw[black] (3,2) circle (2pt);
    \draw[black, thick] (1,-1) -- (2.5,-2.5) -- (4,-1);
    \filldraw[black] (1,-1) circle (2pt);
    \filldraw[black] (2,-1) circle (2pt);    
    \filldraw[black] (3,-1) circle (2pt);
    \filldraw[black] (4,-1) circle (2pt);
    \filldraw[black] (2.5,-2.5) circle (2pt);
    \draw[black, thick] (2,-1) -- (3,-2);
    \filldraw[black] (3,-2) circle (2pt);
    \draw[black, thick] (2.5,-1.5) -- (3,-1);
    \filldraw[black] (2.5,-1.5) circle (2pt);   
    \draw[gray, thin] (1,1) -- (1,-1);    
    \draw[gray, thin] (2,1) -- (2,-1); 
    \draw[gray, thin] (3,1) -- (3,-1); 
    \draw[gray, thin] (4,1) -- (4,-1); 
\end{tikzpicture}
    \caption{Two generators $A, B$ for $F$ and they are contained in $BV$. The vertical lines in the middle (gray thin lines) represent the trivial braid}
    \label{A,B_in_F}
  \end{center}
\end{figure}

 Consider $X_n := A^{-(n-1)}BA^{n-1}$. Then it is obvious that $X_n$ has $n+3$ leaves and there is no crossing (in the tree-braid-tree form representing $X_n$, the braid part should be trivial). Also, $A^n$ has $n+2$ leaves and there is no crossing in the braid part. By Theorem 3.6 in \cite{MR2514382}, we have
 $$ C_1 n \leq ||A^n|| \leq C_2 n + D_1, \ C_1 n \leq ||X_n|| \leq C_2 n + D_2$$ for some $C_1 , C_2 , D_1 , D_2 > 0$. Thus these two inequalities imply
 $$ C_1 \leq \lim_{\omega} \frac{||X_{\lfloor d_n \rfloor}||}{d_n} \leq C_2.$$
 Since $\{ A^{\lfloor d_n \rfloor} \}$ is admissible (that is, $0 < C_1 \leq \lim_{\omega} \frac{||A^{\lfloor d_n \rfloor}||}{d_n} \leq C_2 < \infty $), we have $ B \not \in N.$

So, the kernel $N$ is a proper subgroup of $BV$. By Corollary 2.8 in \cite{MR3781416}, $N$ is contained in $PB_{\infty}$ which is the kernel of the surjective homomorphism $\pi : BV \twoheadrightarrow V$.

 To complete the proof, we will prove that for any $g \in PB_{\infty}$ with $g \neq 1$, $g \not \in N$. It means $N=1$ so we are done.
 Choose $g \in PB_{\infty}$. Then recall that its tree-braid-tree form is $(T, \rho, T)$ where $T$ is a binary tree diagram with $n$ leaves and $\rho$ is a pure braid with $n$ strings. Since $g \neq 1$, we may assume $\rho \neq 1$. We will use the technique designed in Section 2.2 \cite{fournier2022braided}. To describe this technique, we need to define the \emph{right depth} (\cite[Definition 2.9]{fournier2022braided}), the homomorphism $\chi_1$ (\cite[Definition 2.10]{fournier2022braided}), and so on. 
 We briefly introduce these concepts which are slightly modified for our purpose. More precisely, the only difference is the domain; originally, they defined these for $\widehat{bV}$ but we will define these for a more general case.
 We refer to \cite[Section 2.2]{fournier2022braided} for readers who are interested in the explicit definition of  $\widehat{bV}$ and the original definitions.

 Let $P$ be the subgroup of all elements of the form $(T_1,\beta,T_2)$ where $\beta$ is a pure braid.
 For a given binary tree $T$, the right depth of $T$ is the distance from its rightmost leaf to its root, by giving length $1$ at each edge. Using the right depth, we can define a group homomorphism $$ \chi : P \to \ZZ $$ by $\chi \left ((T_1,\beta,T_2) \right)$ is the right depth of $T_1$ minus the right depth of $T_2$. It is easy to show that it is not well-defined if we consider $\chi$ as a homomorphism on whole group $BV$ but by restriction to $P$, it is well-defined.
 Moreover, $PB_{\infty}$ is contained in the intersection $P \cap \ker(\chi)$.

 Let $K_1$ be the subgroup of $P$ of elements that can be represented by the form $(T_1,\beta, T_2)$ where both $T_1, T_2$ have right depth $1$.
 
\vspace{0.6cm} 

\textbf{Claim 1 : } For any $x \in PB_{\infty}$, there exists $p \geq 0$ such that $A^{-p}xA^p \in K_1.$ \\
\textit{(Proof of Claim 1)} The proof is actually the same as the proof of Lemma 2.12 in \cite{fournier2022braided}. Pick $x \in PB_{\infty}$, then its tree-braid-tree form is $(T,\rho,T)$. If $T$ admits a representative with the right depth $1$, there is nothing to prove.
 Suppose that $T$ has the right depth $r>1$. Then $ A^{-(r-1)}xA^{r-1} \in K_1$. We give an example in Figure \ref{Claim1}.

\begin{figure}[h!]
  \begin{center}
\begin{tikzpicture}
	\draw[red, ultra thick] (-5.3,1) -- (-0.7,1) -- (-0.7,-1) -- (-5.3,-1) -- cycle ;
	\filldraw[black] (-3,0) circle (0pt) node{pure braid $\rho$};
    \filldraw[black] (-1,1) circle (2pt);
    \filldraw[black] (-2,1) circle (2pt);    
    \filldraw[black] (-3,1) circle (2pt);
    \filldraw[black] (-4,1) circle (2pt);
    \filldraw[black] (-5,1) circle (2pt);
    \draw[black, thick] (-1,1) -- (-3,3) -- (-5,1);
    \filldraw[black] (-3,3) circle (2pt);

    \filldraw[black, thick] (-2,1) -- (-1.5,1.5);
    \filldraw[black] (-1.5,1.5) circle (2pt);
    \filldraw[black, thick] (-3,1) -- (-2,2);
    \filldraw[black] (-2,2) circle (2pt);
    \filldraw[black, thick] (-4,1) -- (-2.5,2.5);
    \filldraw[black] (-2.5,2.5) circle (2pt);

    \filldraw[black, thick] (-4.5,1) -- (-4.75,1.25);
    \filldraw[black] (-4.5,1) circle (2pt);
    \filldraw[black] (-4.75,1.25) circle (2pt);
    \filldraw[black, thick] (-1.5,1) -- (-1.75,1.25);
    \filldraw[black] (-1.5,1) circle (2pt);
    \filldraw[black] (-1.75,1.25) circle (2pt);
    \filldraw[black] (-1,-1) circle (2pt);
    \filldraw[black] (-2,-1) circle (2pt);    
    \filldraw[black] (-3,-1) circle (2pt);
    \filldraw[black] (-4,-1) circle (2pt);
    \filldraw[black] (-5,-1) circle (2pt);
    \draw[black, thick] (-1,-1) -- (-3,-3) -- (-5,-1);
    \filldraw[black] (-3,-3) circle (2pt);

    \filldraw[black, thick] (-2,-1) -- (-1.5,-1.5);
    \filldraw[black] (-1.5,-1.5) circle (2pt);
    \filldraw[black, thick] (-3,-1) -- (-2,-2);
    \filldraw[black] (-2,-2) circle (2pt);
    \filldraw[black, thick] (-4,-1) -- (-2.5,-2.5);
    \filldraw[black] (-2.5,-2.5) circle (2pt);

    \filldraw[black, thick] (-4.5,-1) -- (-4.75,-1.25);
    \filldraw[black] (-4.5,-1) circle (2pt);
    \filldraw[black] (-4.75,-1.25) circle (2pt);
    \filldraw[black, thick] (-1.5,-1) -- (-1.75,-1.25);
    \filldraw[black] (-1.5,-1) circle (2pt);
    \filldraw[black] (-1.75,-1.25) circle (2pt);

    \draw[red, ultra thick] (5.3,1) -- (0.7,1) -- (0.7,-1) -- (5.3,-1) -- cycle ;
    \filldraw[black] (3,0) circle (0pt) node{pure braid $\rho$};
    \filldraw[black] (1,1) circle (2pt);
    \filldraw[black] (2,1) circle (2pt);    
    \filldraw[black] (3,1) circle (2pt);
    \filldraw[black] (4,1) circle (2pt);
    \filldraw[black] (5,1) circle (2pt);
    \draw[black, thick] (1,1) -- (3,3) -- (5,1);
    \filldraw[black] (3,3) circle (2pt);

     \draw[black, thick] (2,1) -- (1.5,1.5);
     \filldraw[black] (1.5,1.5) circle (2pt);
     \draw[black, thick] (3,1) -- (2,2);
     \filldraw[black] (2,2) circle (2pt);
     \draw[black, thick] (4,1) -- (2.5,2.5);
     \filldraw[black] (2.5,2.5) circle (2pt);

     \draw[black, thick] (1.5,1) -- (1.25,1.25);
     \filldraw[black] (1.25,1.25) circle (2pt);
     \filldraw[black] (1.5,1) circle (2pt);
    
     \draw[black, thick] (3.5,1) -- (3.75,1.25);
     \filldraw[black] (3.75,1.25) circle (2pt);
     \filldraw[black] (3.5,1) circle (2pt);
    \filldraw[black] (1,-1) circle (2pt);
    \filldraw[black] (2,-1) circle (2pt);    
    \filldraw[black] (3,-1) circle (2pt);
    \filldraw[black] (4,-1) circle (2pt);
    \filldraw[black] (5,-1) circle (2pt);
    \draw[black, thick] (1,-1) -- (3,-3) -- (5,-1);
    \filldraw[black] (3,-3) circle (2pt);

     \draw[black, thick] (2,-1) -- (1.5,-1.5);
     \filldraw[black] (1.5,-1.5) circle (2pt);
     \draw[black, thick] (3,-1) -- (2,-2);
     \filldraw[black] (2,-2) circle (2pt);
     \draw[black, thick] (4,-1) -- (2.5,-2.5);
     \filldraw[black] (2.5,-2.5) circle (2pt);

     \draw[black, thick] (1.5,-1) -- (1.25,-1.25);
     \filldraw[black] (1.25,-1.25) circle (2pt);
     \filldraw[black] (1.5,-1) circle (2pt);
    
     \draw[black, thick] (3.5,-1) -- (3.75,-1.25);
     \filldraw[black] (3.75,-1.25) circle (2pt);
     \filldraw[black] (3.5,-1) circle (2pt);
\end{tikzpicture}
    \caption{An example for Claim 1. The left diagram represents $x=(T,\rho,T)$ where $T$ has the right depth $4$. The right diagram is the tree-braid-tree form of $A^{-3}xA^{3}$ so $A^{-3}xA^{3} \in K_1$.}
    \label{Claim1}
  \end{center}
\end{figure}
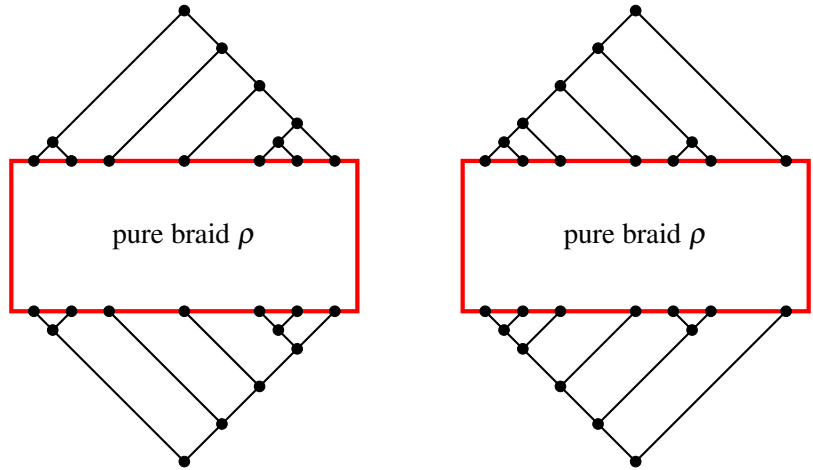
 Since $T$ has the right depth $r$, the number of leaves in $T$ is at least $r+1$. Note that $A^{r-1}$ has the form of $(T_1,1,T_2)$ where the right depth of $T_1, T_2$ are $r$ and $1$, respectively. 
 Since $A^{-(r-1)}$ has the form of $(T_2,1,T_1)$, first we compute $A^{-(r-1)}x$. To do this, we find an expansion $E$ of both $T_1$ and $T$. Since the right depth of $T_1$ and $T$ are both $r$, we can choose an expansion $E$ satisfying the right depth of $E$ is also $r$. Indeed, we just put $E=T$, in other words, $T$ is an extension of $T_1$.
 Thus, using such extension, the tree-braid-tree form of $A^{-(r-1)}x$ is $(T_2',\rho,T)$.
 We want to point out that the right depth of $T_2'$ is $1$.
 Similarly, the tree-braid-tree form of $A^{-(r-1)}xA^{r-1}$ is $(T_2',\rho,T_2')$ but $T_2'$ has right depth $1$. We are done.
 \hfill $\blacksquare$
 
\vspace{0.6cm} 

\textbf{Claim 2 : } For any $x \in PB_{\infty} \cap K_1$ with $l$ leaves, the number of leaves of $A^{-n}xA^n$ is exactly $l+n$. \\
\textit{(Proof of Claim 2)} We will prove that if $x \in PB_{\infty} \cap K_1$ has $l$ leaves, then $A^{-1}xA$ is also in $PB_{\infty} \cap K_1$ and it has $l+1$ leaves. Then the proof is completed by induction.

 First, since $PB_{\infty}$ is a normal subgroup of $BV$, $A^{-1}xA \in PB_{\infty}$.
The facts that $A^{-1}xA \in K_1$ and it has $l+1$ leaves follow from the operation on $BV$.
Choose $x \in PB_{\infty} \cap K_1$. Then its tree-braid-tree form should be as in Figure \ref{Claim2-1}.

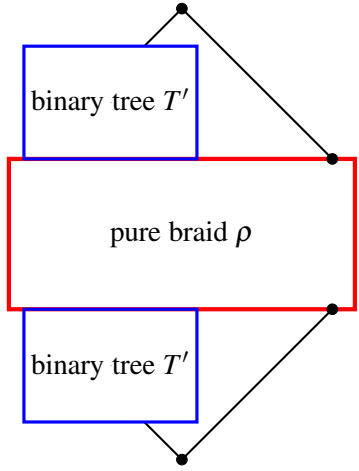
\begin{figure}[h!]
  \begin{center}
\begin{tikzpicture}
    \draw[red, ultra thick] (5.3,1) -- (0.7,1) -- (0.7,-1) -- (5.3,-1) -- cycle ;
    \filldraw[black] (3,0) circle (0pt) node{pure braid $\rho$};
    \filldraw[black] (5,1) circle (2pt);
    \draw[black, thick] (2.5,2.5) -- (3,3) -- (5,1);
    \filldraw[black] (3,3) circle (2pt);
    \draw[blue, very thick] (3.2,1) -- (3.2,2.5) -- (0.9,2.5) -- (0.9,1) -- cycle ;
    \filldraw[black] (2.05,1.75) circle (0pt) node{binary tree $T'$};
    \filldraw[black] (5,-1) circle (2pt);
    \draw[black, thick] (2.5,-2.5) -- (3,-3) -- (5,-1);
    \filldraw[black] (3,-3) circle (2pt);
    \draw[blue, very thick] (3.2,-1) -- (3.2,-2.5) -- (0.9,-2.5) -- (0.9,-1) -- cycle ;
    \filldraw[black] (2.05,-1.75) circle (0pt) node{binary tree $T'$};
\end{tikzpicture}
    \caption{An element $x=(T,\rho,T) \in PB_{\infty} \cap K_1$.}
    \label{Claim2-1}
  \end{center}
\end{figure}
 
 Since $T$ has $l$ leaves, $T'$ has $l-1$ leaves. Here, we assume that $T'$ is reducible, 
 that is, $T'$ is not an extension of any binary tree $T''$ that has less than $l-1$ leaves.
 Then the element $A^{-1}xA$ can be expressed as in Figure \ref{Claim2-2}.

\begin{figure}[h!]
  \begin{center}
\begin{tikzpicture}
    \draw[red, ultra thick] (6.3,1) -- (0.7,1) -- (0.7,-1) -- (6.3,-1) -- cycle ;
    \filldraw[black] (3.5,0) circle (0pt) node{pure braid $\tilde{\rho}$};
    \filldraw[black] (5,1) circle (2pt);
    \draw[black, thick] (2.5,2.5) -- (3,3) -- (5,1);
    \filldraw[black] (3,3) circle (2pt);
    \draw[blue, very thick] (3.2,1) -- (3.2,2.5) -- (0.9,2.5) -- (0.9,1) -- cycle ;
    \filldraw[black] (2.05,1.75) circle (0pt) node{binary tree $T'$};
    \draw[black, thick] (3.5,3.5) -- (3,3);
    \filldraw[black] (3.5,3.5) circle (2pt);
    \draw[black, thick] (3.5,3.5) -- (6,1);
    \filldraw[black] (6,1) circle (2pt);
    \filldraw[black] (5,-1) circle (2pt);
    \draw[black, thick] (2.5,-2.5) -- (3,-3) -- (5,-1);
    \filldraw[black] (3,-3) circle (2pt);
    \draw[blue, very thick] (3.2,-1) -- (3.2,-2.5) -- (0.9,-2.5) -- (0.9,-1) -- cycle ;
    \filldraw[black] (2.05,-1.75) circle (0pt) node{binary tree $T'$};
    \draw[black, thick] (3.5,-3.5) -- (3,-3);
    \filldraw[black] (3.5,-3.5) circle (2pt);
    \draw[black, thick] (3.5,-3.5) -- (6,-1);
    \filldraw[black] (6,-1) circle (2pt);
\end{tikzpicture}
    \caption{Tree-braid-tree form of $A^{-1}xA$.}
    \label{Claim2-2}
  \end{center}
\end{figure}
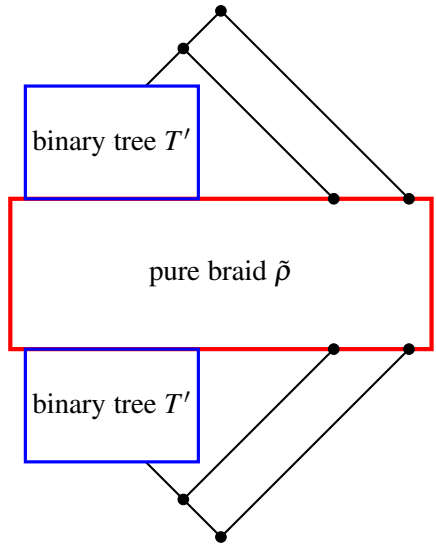

 Here, the pure braid $\tilde{\rho}$ can be constructed from $\rho$ as follows. First, recall that $\rho \in PB_{l}$ is a pure braid. 
 Let $s$ be the $l$-th string (the right-most string in the picture). Now we consider $s$ as two parallel strings, that is, we bifurcate the $l$-th string into parallel two strings. Then the resulting braid is an element of $PB_{l+1}$. This braid is $\tilde{\rho}$.
 From its tree-braid-tree form, $A^{-1}xA \in K_1$ and it has $l+1$ leaves. We complete the proof.
\hfill $\blacksquare$

\vspace{0.6cm} 

 Now we will prove $N=1$. Recall $g \in PB_{\infty}$. Then by Claim 1, $A^{-p}gA^p \in K_1$. Assume that $A^{-p}gA^p$ has $l$ leaves.
 Then by Claim 2, $A^{-n} \left( A^{-p}gA^p \right) A^n$ has $l+n$ leaves. By Theorem 3.6 in \cite{MR2514382}, we obtain
 $$ ||A^{-(n+p)}gA^{n+p}|| \geq C_1(l+n) $$ for some $C_1 >0$.
 Also, recall that $C_2 n \leq ||A^n|| \leq C_3 n + C_4$ for some $C_2, C_3, C_4 > 0$.
Therefore, we get $$ 0 < C_1 \leq \lim_{\omega} \frac{||A^{-(\lfloor d_n \rfloor+p)}gA^{\lfloor d_n \rfloor+p}||}{d_n} \leq 
\lim_{\omega} \frac{||A^{\lfloor d_n \rfloor+p}||}{d_n} \leq 2 C_3 < \infty.$$
Since $\left \{ A^{\lfloor d_n \rfloor+p} \right \}$ is admissible, we show that $g \not \in N$.
Since $g \neq 1$ is arbitrary, we get $N=1$.
 \end{proof}

 \begin{lem} \label{BV-amenalbe_radical}
 The amenable radical of $BV$, $\mathcal{A}(BV)$ is trivial.
 \end{lem}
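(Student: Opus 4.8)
The plan is to show that $\mathcal{A}(BV)$ acts trivially on the Gromov boundary $\partial R$ of the ray graph and then to invoke the faithfulness already established in Lemma \ref{BV-Gromov_kernel}, namely $\ker(BV \curvearrowright \partial R)=1$, to conclude $\mathcal{A}(BV)=1$. As a first (and in the end optional) reduction I would localise the problem inside $PB_{\infty}$: the surjection $\pi:BV \twoheadrightarrow V$ sends the amenable normal subgroup $\mathcal{A}(BV)$ onto an amenable normal subgroup of $V$, and since $V$ is infinite, simple and non-amenable this image must be trivial. Hence $\mathcal{A}(BV)\subseteq \ker\pi = PB_{\infty}$, which is torsion-free, so $\mathcal{A}(BV)$ is either trivial or infinite and it suffices to rule out the infinite case.

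The core of the argument is the standard dichotomy for normal subgroups under a non-elementary action on a hyperbolic space. Writing $N:=\mathcal{A}(BV)$ and viewing $BV \leq M$ acting on the hyperbolic ray graph $R$, I would argue in three steps. First, $N$ contains no loxodromic element: if $h\in N$ were loxodromic with fixed pair $\{h^{+},h^{-}\}\subset\partial R$, then normality would make every conjugate $ghg^{-1}\in N$ loxodromic with fixed pair $g\{h^{\pm}\}$, and non-elementarity of $BV \curvearrowright R$ would supply some $g$ with $g\{h^{\pm}\}\cap\{h^{\pm}\}=\emptyset$; passing to high powers and applying ping-pong (in the spirit of Lemma \ref{QIembeddingL}) would embed a rank-two free group in $N$, contradicting amenability. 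Second, $N$ cannot be horocyclic, since its unique fixed point at infinity would be canonical and hence $BV$-invariant, contradicting non-elementarity; together with the first step, Gromov's classification of isometric actions forces $N$ to be elliptic, i.e. to have bounded orbits. Third, an elliptic normal subgroup fixes every loxodromic fixed point: setting $D:=\diam(N\cdot x_{0})<\infty$ and taking $h\in BV$ loxodromic with attracting point $h^{+}$, one has $d(n h^{k}x_{0},h^{k}x_{0})=d((h^{-k}nh^{k})x_{0},x_{0})\leq D$ for $n\in N$ because $h^{-k}nh^{k}\in N$, so letting $k\to\infty$ yields $n\cdot h^{+}=h^{+}$.

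Since the loxodromic fixed points of a non-elementary action are dense in its limit set (Corollary $7.4.3$ in \cite{MR3558533}) and $N$ acts by homeomorphisms, $N$ then fixes the entire limit set $\Lambda(BV)\subseteq\partial R$ pointwise. Provided $\Lambda(BV)=\partial R$, this gives $N\subseteq\ker(BV\curvearrowright\partial R)=1$ by Lemma \ref{BV-Gromov_kernel}, and the proof is complete; the remaining equalities $\FC(BV)=1$ and $K(BV)=1$ then follow from $\FC(BV)\subseteq\mathcal{A}(BV)$ (Lemma \ref{F<A}) and $K(BV)\subseteq\FC(BV)$. The main obstacle is precisely the dynamical input I have been assuming: that $BV \curvearrowright R$ is non-elementary (so that $BV$ carries two independent loxodromics and no normal subgroup can be lineal or horocyclic) and that its limit set exhausts $\partial R$. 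The faithfulness in Lemma \ref{BV-Gromov_kernel} already shows $BV$ acts nontrivially, and $M$ is known to contain independent loxodromics; the real work is to transfer this to $BV$ itself, for instance by exhibiting explicit independent loxodromic mapping classes inside $BV$ (or, after the reduction above, inside $PB_{\infty}$) and verifying minimality of the resulting limit set. If only a single loxodromic together with faithfulness can be secured, the elliptic step still applies, but one must then upgrade ``$N$ fixes $\Lambda(BV)$'' to ``$N$ fixes all of $\partial R$'' by a separate density argument, and this is the step I expect to require the most care.
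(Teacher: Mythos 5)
Your opening reduction is sound and matches the paper's first step: the image of $\mathcal{A}(BV)$ under $\pi:BV \twoheadrightarrow V$ is an amenable normal subgroup of the simple non-amenable group $V$, hence trivial, so $\mathcal{A}(BV) \subseteq PB_{\infty}$ (the paper gets the same containment by citing Corollary 2.8 of \cite{MR3781416}). The problem is everything after that. Your core argument runs the standard normal-subgroup dichotomy for the action $BV \curvearrowright R$, and every step of it --- ruling out loxodromics in $N$ by ping-pong, ruling out the horocyclic case, concluding ellipticity, and fixing the limit set --- is conditional on $BV \curvearrowright R$ being non-elementary (of general type). You flag this yourself as ``the real work,'' but it is not merely work left undone: the paper explicitly states in its final remarks that \emph{it is still unknown} whether the action of $BV$ on the ray graph contains two independent loxodromic elements. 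Bavard's results give independent loxodromics in the full mapping class group $M$ of the plane minus a Cantor set, not in the subgroup $BV$, and nothing in the paper (not even Lemma \ref{BV-Gromov_kernel}, which only gives faithfulness on $\partial R$) supplies a single loxodromic element of $BV$ on $R$. On top of this, even granting non-elementarity, your conclusion ``$N$ fixes $\Lambda(BV)$ pointwise'' only feeds into Lemma \ref{BV-Gromov_kernel} if $\Lambda(BV)$ is all of $\partial R$ (or dense in it), a second unverified dynamical input. So the proposal has a genuine gap that cannot be patched with current knowledge.

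For contrast, the paper's proof avoids the dynamics on $R$ entirely and works algebraically inside $PB_{\infty}$. It first shows by induction, using the split exact sequences $1 \to F_{n-1} \to PB_n \to PB_{n-1} \to 1$ and the fact that a free group has trivial amenable radical, that $\mathcal{A}(PB_n) \cong \ZZ$ (generated up to finite index by the full twist) for all $n>1$. Then, given $1 \neq x \in \mathcal{A}(PB_{\infty})$ supported on the first $k$ strings, the intersection $\mathcal{A}(PB_{\infty}) \cap PB_{k+2}$ is an infinite cyclic amenable normal subgroup of $PB_{k+2}$ containing both $x$ and the center $Z(PB_{k+2})$; this forces a power of $x$ to equal a nontrivial power of the full twist on $k+2$ strings, which is impossible since $x$ does not braid the last two strings. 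This finite-support argument is exactly what replaces the non-elementarity assumption your approach needs, and it is why the paper can prove the lemma unconditionally.
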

\begin{proof}
Recall that $PB_{\infty}$ is the kernel of the natural surjection $\pi : BV \twoheadrightarrow V$. 
Due to Corollary 2.8 in \cite{MR3781416}, $\mathcal{A}(G)$ is contained in $PB_{\infty}$ since $\mathcal{A}(G) \neq G$.
Since any amenable normal subgroup of $G$ is also amenable normal subgroup of $PB_{\infty}$, we deduce $\mathcal{A}(G) \subset \mathcal{A}(PB_{\infty})$. Thus we will show that $\mathcal{A}(PB_{\infty})$ is trivial.

To justify this, we need to claim that the pure braid group with $n$ strings, $\mathcal{A}(PB_{n})$ is isomorphic to $\ZZ$ for all $n>1$. We use mathematical induction.
When $n=2$, the braid group $B_2$ is isomorphic to $\ZZ$ and so is $PB_2$. Thus, $\mathcal{A}(PB_{2})=\ZZ$.
Now assume that $\mathcal{A}(PB_{n-1})$ is isomorphic to $\ZZ$.
Recall that there exists a short exact sequence
$$ 1 \to F_{n-1} \to PB_{n} \overset{p}{\to} PB_{n-1} \to 1 $$ where $F_{n-1}$ is the rank $n-1$ free group. It induces
$$ 1 \to F_{n-1} \cap \mathcal{A}(PB_{n}) \to \mathcal{A}(PB_{n}) \overset{p|_{\mathcal{A}(PB_{n})}}{\to} p(\mathcal{A}(PB_{n})) \to 1.$$
 Consider $F_{n-1} \cap \mathcal{A}(PB_{n})$. Recall that $F_{n-1}$ and $\mathcal{A}(PB_{n})$ are both normal in $PB_n$. So, it implies that $F_{n-1} \cap \mathcal{A}(PB_{n})$ is a normal subgroup of both $F_{n-1}$ and $\mathcal{A}(PB_n)$. 
 Thus, the intersection $F_{n-1} \cap \mathcal{A}(PB_{n})$ is amenable and normal in $F_{n-1}$. It means $F_{n-1} \cap \mathcal{A}(PB_{n})$ is trivial.
Now we have $$ 1 \to \mathcal{A}(PB_{n}) \overset{p|_{\mathcal{A}(PB_{n})}}{\to} p(\mathcal{A}(PB_{n})) \to 1, $$
and equivalently, $\mathcal{A}(PB_{n})$ is isomorphic to $p(\mathcal{A}(PB_{n}))$ but $p(\mathcal{A}(PB_{n})) \subset \mathcal{A} (PB_{n-1}) = \ZZ$ so $p(\mathcal{A}(PB_{n}))$ is isomorphic to either $1$ or $\ZZ$.
 But the center of $PB_{n}$ is $\ZZ$ and the amenable radical should contain the center. Therefore, $\mathcal{A}(PB_{n})$ is isomorphic to $\ZZ$.

 Now we will prove that $\mathcal{A}(PB_{\infty})$ is trivial. Suppose not, and choose $1 \neq x \in \mathcal{A}(PB_{\infty})$.
 By the definition, $x \in PB_k$ for some $k$. We choose $k$ as minimal so we may assume $x \not \in PB_l$ for all $l<k$ 
 but $x \in PB_l$ for all $l \geq k$. Note that for any $n \in \NN$, the intersection $ \mathcal{A}(PB_{\infty}) \cap PB_n $ is an amenable normal subgroup of $PB_n$. 
 Then, for any $m \geq k$, the intersection $$ \mathcal{A}(PB_{\infty}) \cap PB_m $$ is an amenable normal subgroup of $PB_m$ containing $x$.
 By the claim above, the intersection is isomorphic to $\ZZ$ since the intersection contains $x \neq 1$.

 Consider the intersection $$ \mathcal{A}(PB_{\infty}) \cap PB_{k+2}.$$
 Then it contains $x$ and $x \in PB_k$ so in braid representation, $x$'s $k+1$ and $k+2$ strings are just vertical lines.
 Since the intersection is isomorphic to $\ZZ$ and it contains the center $Z(PB_{k+2})$, $x^N \in Z(PB_{k+2})$ for some $N$. 
 It is a contradiction. Therefore, $\mathcal{A}(PB_{\infty})$ is trivial and $\mathcal{A}(G)$ is also trivial.
\end{proof}

 These lemmas can be summarized as follows.

\begin{prop}
 Let $G:=BV$ be the braided Thompson group and $R$ be the ray graph. Then $K(G)$ exists and $BV$ satisfies the equalities 
 $$ \ker \left ( G \curvearrowright \Cone_{\omega}(G,d_n) \right ) = \ker \left( G \curvearrowright \partial R \right ) = K(G) = \FC(G) = \mathcal{A}(G) = 1 $$ 
 hold for all ultrafilter $\omega$ and sequence $d_n$.
 However, $BV$ is not acylindrically hyperbolic.
\end{prop}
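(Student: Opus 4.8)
The plan is to assemble the four preceding lemmas and supply the short chain of inclusions needed to collapse the remaining subgroups; the genuine work has already been isolated into those lemmas, and the proposition itself is only the bookkeeping that ties them together. Concretely, Lemma~\ref{BV-amenalbe_radical} computes the amenable radical, Lemma~\ref{BV-Cone_kernel} the kernel on the asymptotic cone, Lemma~\ref{BV-Gromov_kernel} the kernel on $\partial R$, and Lemma~\ref{BV-notAH} rules out acylindrical hyperbolicity.

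First I would invoke Lemma~\ref{BV-amenalbe_radical} to get $\mathcal{A}(BV)=1$, from which the entire algebraic side falls out. Since every finite group is amenable, any finite normal subgroup of $BV$ is an amenable normal subgroup, hence is contained in $\mathcal{A}(BV)=1$; thus the only finite normal subgroup is trivial, so the unique maximal finite normal subgroup $K(BV)$ exists and equals $1$. Moreover $BV$ is finitely presented (\cite{MR2364825} and \cite[Theorem 3.1]{MR2384840}), hence countable, so Lemma~\ref{F<A} applies and gives $\FC(BV)\subset\mathcal{A}(BV)=1$, whence $\FC(BV)=1$.

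Next I would record the two kernels directly from the lemmas: the equality $\ker\left(BV\curvearrowright\Cone_{\omega}(BV,d_n)\right)=1$ for all ultrafilters $\omega$ and sequences $d_n$ is precisely Lemma~\ref{BV-Cone_kernel}, and $\ker\left(BV\curvearrowright\partial R\right)=1$ is precisely Lemma~\ref{BV-Gromov_kernel}. Combining these two vanishing statements with $K(BV)=\FC(BV)=\mathcal{A}(BV)=1$ established above yields the full displayed chain of equalities. Finally, Lemma~\ref{BV-notAH} asserts that $BV$ is not acylindrically hyperbolic, which completes the statement.

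I do not expect any real obstacle at the level of the proposition, since each equality is either one of the four lemmas or an immediate consequence of the inclusion results of Section~\ref{sec:4subgroups}. The conceptual point worth emphasising is that the displayed chain is exactly condition~\eqref{Main_Equalities} with $X=R$, so $BV$ witnesses that a finitely generated group can satisfy~\eqref{Main_Equalities} without being acylindrically hyperbolic; the force of the example therefore lies entirely in the simultaneous truth of Lemma~\ref{BV-notAH} and the three vanishing results, whose proofs (the inductive computation of $\mathcal{A}(PB_{\infty})$ and the tree-braid-tree length estimates together with the faithfulness of the $M$-action on $R$) are where all the difficulty resides.
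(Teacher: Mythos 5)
Your proposal is correct and matches the paper's intended argument: the paper states this proposition with no separate proof beyond the remark that it summarizes Lemmas \ref{BV-notAH}, \ref{BV-Gromov_kernel}, \ref{BV-Cone_kernel}, and \ref{BV-amenalbe_radical}, and your bookkeeping (deducing existence and triviality of $K(BV)$ from $\mathcal{A}(BV)=1$ since finite normal subgroups are amenable, and $\FC(BV)=1$ via Lemma \ref{F<A}) is exactly the glue needed. The only cosmetic variation is that one could instead get $\FC(BV)=1$ from Lemma \ref{EasyInclusion} together with the vanishing of the cone kernel; both routes are immediate.
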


It is still unknown whether the group action of $BV$ on the ray graph $R$ contains two independent loxodromic. Motivated by the above result and question, we were wondering if the following is true.

\begin{ques}
 Let $G$ be a finitely generated group and suppose that $G$ acts isometrically on some $\delta$-hyperbolic space $X$ and $G$ contains two independent loxodromic elements. If $K(G)$ exists and the equalities 
 $$ \ker \left ( G \curvearrowright \Cone_{\omega}(G,d_n) \right ) = \ker \left( G \curvearrowright \partial X \right ) = K(G) = \FC(G) =  \mathcal{A}(G) $$ 
 hold for all ultrafilter $\omega$ and sequence $d_n$, then $G$ is acylindrically hyperbolic?
\end{ques}

 If the action of $BV$ on the ray graph admits two independent loxodromic elements, then the answer is automatically false. If the question is answered affirmatively, then this gives another characterization for being acylindrically hyperbolic. 
 

\bibliography{preprint}

\begin{thebibliography}{10}

\bibitem{abbott2019actions}
C.~R. Abbott and A.~J. Rasmussen.
\newblock Actions of solvable baumslag-solitar groups on hyperbolic metric
  spaces.
\newblock {\em arXiv preprint arXiv:1906.04227}, 2019.

\bibitem{aramayona2021asymptotic}
J.~Aramayona, K.-U. Bux, J.~Flechsig, N.~Petrosyan, and X.~Wu.
\newblock Asymptotic mapping class groups of cantor manifolds and their
  finiteness properties.
\newblock {\em arXiv preprint arXiv:2110.05318}, 2021.

\bibitem{MR2658420}
F.~Atalan.
\newblock Outer automorphisms of mapping class groups of nonorientable
  surfaces.
\newblock {\em Internat. J. Algebra Comput.}, 20(3):437--456, 2010.

\bibitem{bader2022hyperbolic}
U.~Bader, P.-E. Caprace, A.~Furman, and A.~Sisto.
\newblock Hyperbolic actions of higher-rank lattices come from rank-one
  factors.
\newblock {\em arXiv preprint arXiv:2206.06431}, 2022.

\bibitem{balasubramanya2022property}
S.~Balasubramanya, F.~Fournier-Facio, A.~Genevois, and A.~Sisto.
\newblock Property (nl) for group actions on hyperbolic spaces.
\newblock {\em arXiv preprint arXiv:2212.14292}, 2022.

\bibitem{MR3685605}
S.~H. Balasubramanya.
\newblock Acylindrical group actions on quasi-trees.
\newblock {\em Algebr. Geom. Topol.}, 17(4):2145--2176, 2017.

\bibitem{MR3470720}
J.~Bavard.
\newblock Hyperbolicit\'{e} du graphe des rayons et quasi-morphismes sur un
  gros groupe modulaire.
\newblock {\em Geom. Topol.}, 20(1):491--535, 2016.

\bibitem{bavard2018gromov}
J.~Bavard.
\newblock Gromov-hyperbolicity of the ray graph and quasimorphisms on a big
  mapping class group.
\newblock {\em arXiv preprint arXiv:1802.02715}, 2018.

\bibitem{MR3852444}
J.~Bavard and A.~Walker.
\newblock The {G}romov boundary of the ray graph.
\newblock {\em Trans. Amer. Math. Soc.}, 370(11):7647--7678, 2018.

\bibitem{MR4057354}
M.~Bestvina, K.~Bromberg, K.~Fujiwara, and A.~Sisto.
\newblock Acylindrical actions on projection complexes.
\newblock {\em Enseign. Math.}, 65(1-2):1--32, 2019.

\bibitem{MR1914565}
M.~Bestvina and K.~Fujiwara.
\newblock Bounded cohomology of subgroups of mapping class groups.
\newblock {\em Geom. Topol.}, 6:69--89, 2002.

\bibitem{MR2281936}
N.~Brady, T.~Riley, and H.~Short.
\newblock {\em The geometry of the word problem for finitely generated groups}.
\newblock Advanced Courses in Mathematics. CRM Barcelona. Birkh\"{a}user
  Verlag, Basel, 2007.
\newblock Papers from the Advanced Course held in Barcelona, July 5--15, 2005.

\bibitem{MR2384840}
T.~Brady, J.~Burillo, S.~Cleary, and M.~Stein.
\newblock Pure braid subgroups of braided {T}hompson's groups.
\newblock {\em Publ. Mat.}, 52(1):57--89, 2008.

\bibitem{MR1744486}
M.~R. Bridson and A.~Haefliger.
\newblock {\em Metric spaces of non-positive curvature}, volume 319 of {\em
  Grundlehren der Mathematischen Wissenschaften [Fundamental Principles of
  Mathematical Sciences]}.
\newblock Springer-Verlag, Berlin, 1999.

\bibitem{MR2364825}
M.~G. Brin.
\newblock The algebra of strand splitting. {I}. {A} braided version of
  {T}hompson's group {$V$}.
\newblock {\em J. Group Theory}, 10(6):757--788, 2007.

\bibitem{MR2514382}
J.~Burillo and S.~Cleary.
\newblock Metric properties of braided {T}hompson's groups.
\newblock {\em Indiana Univ. Math. J.}, 58(2):605--615, 2009.

\bibitem{MR2452818}
J.~Burillo, S.~Cleary, M.~Stein, and J.~Taback.
\newblock Combinatorial and metric properties of {T}hompson's group {$T$}.
\newblock {\em Trans. Amer. Math. Soc.}, 361(2):631--652, 2009.

\bibitem{MR3384081}
J.~Burillo and M.~Elder.
\newblock Metric properties of {B}aumslag-{S}olitar groups.
\newblock {\em Internat. J. Algebra Comput.}, 25(5):799--811, 2015.

\bibitem{MR2392498}
J.~Burillo and J.~Gonz\'{a}lez-Meneses.
\newblock Bi-orderings on pure braided {T}hompson's groups.
\newblock {\em Q. J. Math.}, 59(1):1--14, 2008.

\bibitem{CalegariBlog}
D.~Calegari.
\newblock Big mapping class groups and dynamics.
\newblock
  \url{https://lamington.wordpress.com/2009/06/22/big-mapping-class-groups-and-dynamics/},
  2009.

\bibitem{MR4266358}
D.~Calegari and L.~Chen.
\newblock Big mapping class groups and rigidity of the simple circle.
\newblock {\em Ergodic Theory Dynam. Systems}, 41(7):1961--1987, 2021.

\bibitem{MR1426438}
J.~W. Cannon, W.~J. Floyd, and W.~R. Parry.
\newblock Introductory notes on {R}ichard {T}hompson's groups.
\newblock {\em Enseign. Math. (2)}, 42(3-4):215--256, 1996.

\bibitem{MR3550299}
C.~H. Cashen.
\newblock Quasi-isometries need not induce homeomorphisms of contracting
  boundaries with the {G}romov product topology.
\newblock {\em Anal. Geom. Metr. Spaces}, 4(1):278--281, 2016.

\bibitem{MR3339446}
R.~Charney and H.~Sultan.
\newblock Contracting boundaries of {$\rm CAT(0)$} spaces.
\newblock {\em J. Topol.}, 8(1):93--117, 2015.

\bibitem{MR1746908}
C.~B. Croke and B.~Kleiner.
\newblock Spaces with nonpositive curvature and their ideal boundaries.
\newblock {\em Topology}, 39(3):549--556, 2000.

\bibitem{MR3589159}
F.~Dahmani, V.~Guirardel, and D.~Osin.
\newblock Hyperbolically embedded subgroups and rotating families in groups
  acting on hyperbolic spaces.
\newblock {\em Mem. Amer. Math. Soc.}, 245(1156):v+152, 2017.

\bibitem{MR3558533}
T.~Das, D.~Simmons, and M.~Urba\'{n}ski.
\newblock {\em Geometry and dynamics in {G}romov hyperbolic metric spaces},
  volume 218 of {\em Mathematical Surveys and Monographs}.
\newblock American Mathematical Society, Providence, RI, 2017.
\newblock With an emphasis on non-proper settings.

\bibitem{MR2360474}
M.~W. Davis.
\newblock {\em The geometry and topology of {C}oxeter groups}, volume~32 of
  {\em London Mathematical Society Monographs Series}.
\newblock Princeton University Press, Princeton, NJ, 2008.

\bibitem{MR2258261}
P.~Dehornoy.
\newblock The group of parenthesized braids.
\newblock {\em Adv. Math.}, 205(2):354--409, 2006.

\bibitem{MR3753580}
C.~Dru\c{t}u and M.~Kapovich.
\newblock {\em Geometric group theory}, volume~63 of {\em American Mathematical
  Society Colloquium Publications}.
\newblock American Mathematical Society, Providence, RI, 2018.
\newblock With an appendix by Bogdan Nica.

\bibitem{MR2153979}
C.~Dru\c{t}u and M.~Sapir.
\newblock Tree-graded spaces and asymptotic cones of groups.
\newblock {\em Topology}, 44(5):959--1058, 2005.
\newblock With an appendix by Denis Osin and Mark Sapir.

\bibitem{MR1452851}
D.~B.~A. Epstein and K.~Fujiwara.
\newblock The second bounded cohomology of word-hyperbolic groups.
\newblock {\em Topology}, 36(6):1275--1289, 1997.

\bibitem{MR568933}
W.~J. Floyd.
\newblock Group completions and limit sets of {K}leinian groups.
\newblock {\em Invent. Math.}, 57(3):205--218, 1980.

\bibitem{fournier2022braided}
F.~Fournier-Facio, Y.~Lodha, and M.~C. Zaremsky.
\newblock Braided thompson groups with and without quasimorphisms.
\newblock {\em arXiv preprint arXiv:2204.05272}, 2022.

\bibitem{MR1452414}
E.~M. Freden.
\newblock Properties of convergence groups and spaces.
\newblock {\em Conform. Geom. Dyn.}, 1:13--23 (electronic), 1997.

\bibitem{MR896224}
F.~W. Gehring and G.~J. Martin.
\newblock Discrete quasiconformal groups. {I}.
\newblock {\em Proc. London Math. Soc. (3)}, 55(2):331--358, 1987.

\bibitem{genevois2018automorphisms}
A.~Genevois.
\newblock Automorphisms of graph products of groups and acylindrical
  hyperbolicity.
\newblock {\em arXiv preprint arXiv:1807.00622}, 2018.

\bibitem{MR2989436}
V.~Gerasimov.
\newblock Floyd maps for relatively hyperbolic groups.
\newblock {\em Geom. Funct. Anal.}, 22(5):1361--1399, 2012.

\bibitem{MR3120738}
V.~Gerasimov and L.~Potyagailo.
\newblock Quasi-isometric maps and {F}loyd boundaries of relatively hyperbolic
  groups.
\newblock {\em J. Eur. Math. Soc. (JEMS)}, 15(6):2115--2137, 2013.

\bibitem{MR919829}
M.~Gromov.
\newblock Hyperbolic groups.
\newblock In {\em Essays in group theory}, volume~8 of {\em Math. Sci. Res.
  Inst. Publ.}, pages 75--263. Springer, New York, 1987.

\bibitem{MR3630641}
U.~Haagerup and K.~K. Olesen.
\newblock Non-inner amenability of the {T}hompson groups {$T$} and {$V$}.
\newblock {\em J. Funct. Anal.}, 272(11):4838--4852, 2017.

\bibitem{MR4094562}
T.~Haettel.
\newblock Hyperbolic rigidity of higher rank lattices.
\newblock {\em Ann. Sci. \'{E}c. Norm. Sup\'{e}r. (4)}, 53(2):439--468, 2020.
\newblock With an appendix by Vincent Guirardel and Camille Horbez.

\bibitem{MR2918313}
T.~Hosaka.
\newblock On boundaries of {C}oxeter groups and topological fractal structures.
\newblock {\em Tsukuba J. Math.}, 35(2):153--160, 2011.

\bibitem{MR970079}
N.~V. Ivanov.
\newblock Automorphisms of {T}eichm\"{u}ller modular groups.
\newblock In {\em Topology and geometry---{R}ohlin {S}eminar}, volume 1346 of
  {\em Lecture Notes in Math.}, pages 199--270. Springer, Berlin, 1988.

\bibitem{MR2005231}
A.~Karlsson.
\newblock Free subgroups of groups with nontrivial {F}loyd boundary.
\newblock {\em Comm. Algebra}, 31(11):5361--5376, 2003.

\bibitem{MR4403143}
Y.~Kida and R.~Tucker-Drob.
\newblock Groups with infinite {FC}-center have the {S}chmidt property.
\newblock {\em Ergodic Theory Dynam. Systems}, 42(5):1662--1707, 2022.

\bibitem{MR4052190}
I.~Levcovitz.
\newblock Thick groups have trivial {F}loyd boundary.
\newblock {\em Proc. Amer. Math. Soc.}, 148(2):513--521, 2020.

\bibitem{MR3729310}
C.~L\"{o}h.
\newblock {\em Geometric group theory}.
\newblock Universitext. Springer, Cham, 2017.
\newblock An introduction.

\bibitem{MR1828742}
A.~Lubotzky, S.~Mozes, and M.~S. Raghunathan.
\newblock The word and {R}iemannian metrics on lattices of semisimple groups.
\newblock {\em Inst. Hautes \'{E}tudes Sci. Publ. Math.}, (91):5--53 (2001),
  2000.

\bibitem{martin1986generalizations}
G.~J. Martin and F.~W. Gehring.
\newblock {\em Generalizations of Kleinian groups}.
\newblock Mathematical Sciences Research Institute, 1986.

\bibitem{MR3368093}
A.~Minasyan and D.~Osin.
\newblock Acylindrical hyperbolicity of groups acting on trees.
\newblock {\em Math. Ann.}, 362(3-4):1055--1105, 2015.

\bibitem{MR3947271}
D.~Murray.
\newblock Topology and dynamics of the contracting boundary of cocompact {$\rm
  CAT(0)$} spaces.
\newblock {\em Pacific J. Math.}, 299(1):89--116, 2019.

\bibitem{MR2507115}
A.~Y. Olshansky, D.~V. Osin, and M.~V. Sapir.
\newblock Lacunary hyperbolic groups.
\newblock {\em Geom. Topol.}, 13(4):2051--2140, 2009.
\newblock With an appendix by Michael Kapovich and Bruce Kleiner.

\bibitem{MR2310154}
A.~Y. Olshansky and M.~V. Sapir.
\newblock A finitely presented group with two non-homeomorphic asymptotic
  cones.
\newblock {\em Internat. J. Algebra Comput.}, 17(2):421--426, 2007.

\bibitem{MR3403956}
D.~Osin.
\newblock On acylindrical hyperbolicity of groups with positive first
  {$\ell^2$}-{B}etti number.
\newblock {\em Bull. Lond. Math. Soc.}, 47(5):725--730, 2015.

\bibitem{MR3430352}
D.~Osin.
\newblock Acylindrically hyperbolic groups.
\newblock {\em Trans. Amer. Math. Soc.}, 368(2):851--888, 2016.

\bibitem{MR2827204}
D.~Osin and M.~Sapir.
\newblock Universal tree-graded spaces and asymptotic cones.
\newblock {\em Internat. J. Algebra Comput.}, 21(5):793--824, 2011.

\bibitem{MR2278040}
A.~Ould~Houcine.
\newblock Embeddings in finitely presented groups which preserve the center.
\newblock {\em J. Algebra}, 307(1):1--23, 2007.

\bibitem{MR2545250}
P.~Papasoglu and E.~Swenson.
\newblock Boundaries and {JSJ} decompositions of {CAT}(0)-groups.
\newblock {\em Geom. Funct. Anal.}, 19(2):559--590, 2009.

\bibitem{MR2333366}
L.~Paris.
\newblock Irreducible {C}oxeter groups.
\newblock {\em Internat. J. Algebra Comput.}, 17(3):427--447, 2007.

\bibitem{MR1105339}
F.~Paulin.
\newblock Outer automorphisms of hyperbolic groups and small actions on {${\bf
  R}$}-trees.
\newblock In {\em Arboreal group theory ({B}erkeley, {CA}, 1988)}, volume~19 of
  {\em Math. Sci. Res. Inst. Publ.}, pages 331--343. Springer, New York, 1991.

\bibitem{MR4753310}
H.~Petyt, D.~Spriano, and A.~Zalloum.
\newblock Hyperbolic models for {${\rm CAT}(0)$} spaces.
\newblock {\em Adv. Math.}, 450:Paper No. 109742, 66, 2024.

\bibitem{MR4423805}
Y.~Qing and K.~Rafi.
\newblock Sublinearly {M}orse boundary {I}: {CAT}(0) spaces.
\newblock {\em Adv. Math.}, 404(part B):Paper No. 108442, 51, 2022.

\bibitem{qinggeometry}
Y.~Qing and A.~Zalloum.
\newblock Geometry and dynamics on sublinearly morse boundaries of cat (0)
  groups.

\bibitem{MR2007488}
J.~Roe.
\newblock {\em Lectures on coarse geometry}, volume~31 of {\em University
  Lecture Series}.
\newblock American Mathematical Society, Providence, RI, 2003.

\bibitem{SapirNote}
M.~Sapir.
\newblock Asymptotic cones of groups.
\newblock \url{http://www.crm.umontreal.ca/GAGTA10/pdf/SapirNotes.pdf}, August
  2010.

\bibitem{MR3047634}
A.~Sisto.
\newblock Separable and tree-like asymptotic cones of groups.
\newblock {\em M\"{u}nster J. Math.}, 5:233--248, 2012.

\bibitem{MR3849623}
A.~Sisto.
\newblock Contracting elements and random walks.
\newblock {\em J. Reine Angew. Math.}, 742:79--114, 2018.

\bibitem{MR3995017}
B.~Sun.
\newblock A dynamical characterization of acylindrically hyperbolic groups.
\newblock {\em Algebr. Geom. Topol.}, 19(4):1711--1745, 2019.

\bibitem{MR3514064}
S.~J. Taylor and G.~Tiozzo.
\newblock Random extensions of free groups and surface groups are hyperbolic.
\newblock {\em Int. Math. Res. Not. IMRN}, (1):294--310, 2016.

\bibitem{MR1734187}
S.~Thomas and B.~Velickovic.
\newblock Asymptotic cones of finitely generated groups.
\newblock {\em Bull. London Math. Soc.}, 32(2):203--208, 2000.

\bibitem{MR4142752}
R.~D. Tucker-Drob.
\newblock Invariant means and the structure of inner amenable groups.
\newblock {\em Duke Math. J.}, 169(13):2571--2628, 2020.

\bibitem{MR1313451}
P.~Tukia.
\newblock Convergence groups and {G}romov's metric hyperbolic spaces.
\newblock {\em New Zealand J. Math.}, 23(2):157--187, 1994.

\bibitem{YoungNote}
R.~Young.
\newblock Notes on asymptotic cones.
\newblock
  \url{http://pi.math.cornell.edu/~riley/Conferences/SL(n,Z)/Young_on_Asymptotic_Cones.pdf}.

\bibitem{MR4423140}
A.~Zalloum.
\newblock Convergence of sublinearly contracting horospheres.
\newblock {\em Geom. Dedicata}, 216(3):Paper No. 35, 14, 2022.

\bibitem{MR3781416}
M.~C.~B. Zaremsky.
\newblock On normal subgroups of the braided {T}hompson groups.
\newblock {\em Groups Geom. Dyn.}, 12(1):65--92, 2018.

\end{thebibliography}
\bibliographystyle{abbrv}

\end{document}